\documentclass[11pt]{article}

\usepackage[T1]{fontenc}
\usepackage[utf8]{inputenc}
\usepackage{lmodern}
\usepackage{amsmath,amssymb,amsthm}
\usepackage{geometry}
\usepackage{enumitem}
\usepackage{microtype}
\usepackage[hidelinks]{hyperref}
\usepackage{tikz}

\setlist{itemsep=2pt,topsep=3pt,parsep=0pt,partopsep=0pt}

\newtheorem{theorem}{Theorem}[section]
\newtheorem{proposition}[theorem]{Proposition}
\newtheorem{lemma}[theorem]{Lemma}
\newtheorem{example}[theorem]{Example}
\newtheorem{corollary}[theorem]{Corollary}
\theoremstyle{definition}
\newtheorem{definition}[theorem]{Definition}
\newtheorem{remark}[theorem]{Remark}
\newtheorem*{theoremA}{Theorem A (local reduced case)}
\newtheorem*{theoremB}{Theorem B (Extension theorem)}
\newtheorem*{theoremC}{Theorem C (local general case dimension two)}
\newtheorem*{theoremD}{Theorem D (Homogeneous logarithmic divisor theorem)}
\newtheorem*{theoremE}{Theorem E}
\newtheorem*{theoremF}{Theorem F}
\newtheorem*{theoremG}{Theorem G}
\newtheorem*{theoremH}{Theorem H (local two terms dimension m)}

\newcommand{\C}{\mathbb C}
\newcommand{\N}{\mathbb N}
\newcommand{\R}{\mathbb R}
\newcommand{\Om}{\mathcal O}
\newcommand{\Mm}{\mathcal M}

\title{Relative Cohomology and Deformations\\
of Logarithmic Foliations on Complex Spaces}
\author{V\'ictor Le\'on \and Bruno Sc\'ardua}
\author{
V\'ictor Le\'on%
\thanks{ILACVN--CICN, Universidade Federal da Integração Latino-Americana,
Parque Tecnológico Itaipu, Foz do Iguaçu--PR, 85867-970, Brazil.
E-mail: \href{mailto:victor.leon@unila.edu.br}{victor.leon@unila.edu.br}}
\and
Bruno Sc\'ardua%
\thanks{Instituto de Matem\'atica, Universidade Federal do Rio de Janeiro,
CP 68530, Rio de Janeiro--RJ, 21945-970, Brazil.
E-mail: \href{mailto:bruno.scardua@gmail.com}{bruno.scardua@gmail.com}}
}\date{}

\begin{document}

\setlength{\abovedisplayskip}{7pt plus 2pt minus 3pt}
\setlength{\belowdisplayskip}{7pt plus 2pt minus 3pt}
\setlength{\abovedisplayshortskip}{4pt plus 2pt minus 2pt}
\setlength{\belowdisplayshortskip}{5pt plus 2pt minus 2pt}

\maketitle

\begin{abstract}
We study relative cohomology for logarithmic differential forms and
meromorphic forms that are relatively closed with respect to the associated
logarithmic foliation. Under suitable Diophantine, geometric, and topological
hypotheses, we obtain decompositions into a meromorphic multiple of the
defining logarithmic form, an exact meromorphic form, and a logarithmic form
with constant residues. We establish a meromorphic extension theorem from
suitable two-dimensional sections and prove local, polynomial, and
homogeneous versions of the relative-cohomology decomposition; the global
polynomial result is proved in arbitrary dimension by ambient leafwise
continuation and meromorphic extension. Resolution
of singularities, non-nodal saturation, and holonomy gluing are used to
treat singular logarithmic foliations. As an application, we derive formal
normal forms for analytic integrable deformations, including a
several-component result in dimension two and a two-component extension in
higher dimensions.
\end{abstract}

\medskip
\noindent\textbf{Keywords.}
Holomorphic foliation; logarithmic $1$-form; relative cohomology;
meromorphic differential form; Diophantine condition; deformation;
resolution of singularities; holonomy.

\medskip
\noindent\textbf{2020 Mathematics Subject Classification.}
Primary 32S65; Secondary 37F75, 32M25.

% Outline appearing on handwritten pages 2--3:
% 1. Introduction
% 2. Preliminaries: non-singular case; diophantine conditions
% 3. The relative cohomology equation for logarithmic simple singularities
% 4. The extension theorem
% 5. The case of simple polyhomogeneous forms
% 6. Topology and dynamics of the leaves:
%    6.1 General position and diophantine conditions
%    6.2 Nodal singularities
% 7. Relative cohomology for homogeneous forms of type
%    omega_0=dQ/Q-lambda dP/P
% 8. Relative cohomology for germs of logarithmic 1-forms
% 9. Analytic integrable deformations of logarithmic foliation germs
\noindent\textbf{Funding Declaration:} During the preparation of this work, the second listed author was partially funded by 
Fundação Getúlio Vargas - Rio de Janeiro.  
\section{Introduction}

In this work we study relative cohomology for complex logarithmic
$1$-forms. More precisely, let $f_1,\ldots,f_n$ be holomorphic functions on a
complex manifold $M$ and $\lambda_1,\ldots,\lambda_n\in\C$ complex numbers.
We consider
\[
 \omega_0=d\log\bigl(f_1^{\lambda_1}\cdots f_n^{\lambda_n}\bigr)
 =\sum_{j=1}^n\lambda_j\frac{df_j}{f_j},
\]
the corresponding logarithmic $1$-form on $M$ with residues
$\lambda_1,\ldots,\lambda_n$.

We denote by $\mathcal F_{\omega_0}$ the codimension-one holomorphic foliation
with singularities on $M$ defined by
\[
 \Omega_0=\left(\prod_{i=1}^n f_i\right)\omega_0
 =\sum_{i=1}^n\lambda_i f_1\cdots\widehat{f_i}\cdots f_n\,df_i.
\]

Let $\eta$ be a meromorphic $1$-form on $M$. We shall say that $\eta$ is
\emph{closed with respect to $\omega_0$} if
\[
 d\eta\wedge\omega_0=0
\]
off the poles of $\eta$ and $\omega_0$. We say that $\eta$ is
\emph{exact with respect to $\omega_0$} if there is a meromorphic function
$h$ on $M$ such that
\[
 \eta\wedge\omega_0=dh\wedge\omega_0.
\]
This last expression is related to the expression  $\eta = a \omega_0 + dh$ as a consequence of 
Saito-de Rham division lemma (\cite{deRham1954},\cite{saito}). 
Our aim is to study the answer to the following basic question:

\begin{quote}
Given $\eta$ closed with respect to $\omega_0$, is $\eta$ exact with respect
to $\omega_0$?
\end{quote}

More generally:

\begin{quote}
If $d\eta\wedge\omega_0=0$, what can be said about $\eta$?
\end{quote}

This question is strongly related to the study of deformations
$\{\omega_t\}_{t\in(\C,0)}$ of $\omega_0$ by integrable $1$-forms $\omega_t$.
It is also connected to the construction of formal and convergent normal
forms.

The question of relative cohomology has already been addressed by several
authors. Its origins go back to the work of Ilyashenko~\cite{IlyashenkoYakovenko}. In
Berthier--Cerveau~\cite{BerthierCerveau} the authors consider the case of holomorphic $1$-forms
$\eta$ and prove various triviality results related to integral conditions.
This is also the framework found in Berthier--Loray~\cite{BerthierLoray}, where the $1$-form
$\omega_0$ is a germ of resonant non-linearizable $1$-form at the origin
$0\in\C^2$. Again the authors find some integral conditions on paths on the
leaves of $\omega_0$.

In this work, where $\omega_0$ is linear logarithmic with generic conditions
of non-resonance and diophantine type on the residues
$\lambda_1,\ldots,\lambda_n$, the open (nonclosed) leaves of
$\mathcal F_{\omega_0}$ contain no nontrivial cycles. Thus we do not work with
integral conditions. On the other hand, some genericity conditions on the
residues allow the solution of the relative cohomology equation
\[
 \widetilde\eta\wedge\omega_0=dh\wedge\omega_0
\]
for suitable modifications $\widetilde\eta=\eta+\cdots$ of $\eta$, on small
neighborhoods of singular points. Then the simple connectivity of the open
leaves will, always under some mild genericity conditions on the divisor
\[
 D(\omega_0):=\left(\prod_{j=1}^n f_j=0\right)\subset M,
\]
allow the extension and gluing of the local solutions. The modification
$\widetilde\eta$ is obtained by removing from $\eta$ its logarithmic part.
Our first result in the local framework is the following.

\begin{theoremA}
{\em Let $\lambda\in\C\setminus\R$ satisfy the positive-cone diophantine condition
\[
 |\lambda m+n|>\frac{c}{(m+n)^\delta},\qquad
 \forall(m,n)\in\N^2\setminus\{(0,0)\},
\]
and let
\[
 \omega_0=\frac{dy}{y}-\lambda\frac{dx}{x}.
\]
Let $\eta$ be a meromorphic $1$-form germ at $0\in\C^2$ such that $xy\eta$ is
holomorphic. There are $\alpha,\beta\in\C$ and a meromorphic germ
$h\in\Mm_2$ such that:
\begin{enumerate}[label=\textup{(\arabic*)}]
 \item $xyh\in\Om_2$;
 \item
 \[
  \left(\eta-\alpha\frac{dx}{x}-\beta\frac{dy}{y}\right)\wedge\omega_0
  =dh\wedge\omega_0.
 \]
\end{enumerate}
In particular, there is $a\in\Mm_2$, with $xya\in\Om_2$, such that
\[
 \eta=a\omega_0+\alpha\frac{dx}{x}+\beta\frac{dy}{y}+dh.
\]
}
\end{theoremA}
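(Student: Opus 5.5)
The plan is to reduce the statement to a linear equation on Taylor coefficients, and to observe that for $\lambda\notin\R$ this equation has no small divisors.

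\textbf{Setting up the equation.} Write $xy\,\eta=A\,dx+B\,dy$ with $A,B\in\Om_2$, and look for $h$ of the form $h=g/(xy)$ with $g\in\Om_2$; this choice gives (1) automatically. A direct computation gives
\[
\eta\wedge\omega_0=\frac{xA+\lambda yB}{x^2y^2}\,dx\wedge dy,
\qquad
dh\wedge\omega_0=\frac{x g_x+\lambda y g_y-(1+\lambda)g}{x^2y^2}\,dx\wedge dy .
\]
Moreover $\alpha\,dx/x+\beta\,dy/y$ contributes $(\alpha+\lambda\beta)\,xy$ to the numerator. So (2) is equivalent to the holomorphic equation
\[
L(g):=x g_x+\lambda y g_y-(1+\lambda)g \;=\; F:=xA+\lambda yB-(\alpha+\lambda\beta)\,xy .
\]

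\textbf{Solving coefficientwise.} The operator $L$ is diagonal on monomials:
\[
L(x^iy^j)=\bigl((i-1)+\lambda(j-1)\bigr)\,x^iy^j .
\]
Since $\lambda\notin\R$, the multiplier $(i-1)+\lambda(j-1)$ vanishes only at $(i,j)=(1,1)$. Note that $F_{00}=0$. The only obstruction is therefore the coefficient of $xy$, which we kill by choosing $\alpha,\beta$ with $\alpha+\lambda\beta=(xA+\lambda yB)_{11}$, for instance $\beta=0$. We then set $g_{11}=0$ and
\[
g_{ij}=\frac{F_{ij}}{(i-1)+\lambda(j-1)}\quad\text{for }(i,j)\neq(1,1).
\]

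\textbf{Convergence.} This is where one would expect the main difficulty, namely small divisors, and where the diophantine hypothesis would normally enter. Here it is harmless. Because $1$ and $\lambda$ are $\R$-linearly independent, $(m,n)\mapsto|m+\lambda n|$ is a norm on $\R^2$. Hence there is $c'>0$ with
\[
|m+\lambda n|\ge c'(|m|+|n|)\ge c'
\]
for all $(m,n)\in\Bbb Z^2\setminus\{0\}$, including the indices $m=-1$ or $n=-1$ that the positive-cone condition does not cover. Consequently $|g_{ij}|\le |F_{ij}|/c'$, so $g$ converges on any polydisc where $F$ does, and $g\in\Om_2$. (The stated diophantine condition is therefore not even needed once $\lambda\notin\R$. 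It would matter only in the real case, where one would use it on the positive cone and handle the finitely many remaining index lines separately.)

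\textbf{Extracting $a$.} Put $\theta=\eta-\alpha\,dx/x-\beta\,dy/y-dh$, so that $\theta\wedge\omega_0=0$ off the axes. Since $\omega_0$ has nonvanishing $dy$-coefficient $1/y$, we get $\theta=a\,\omega_0$ with $a=y\,\theta_y$, where $\theta_y$ is the $dy$-coefficient of $\theta$ (this is the elementary de Rham division in this setting). Explicitly,
\[
xy\,a=yB-\beta xy-(y g_y-g),
\]
which is holomorphic. This gives the final decomposition
\[
\eta=a\,\omega_0+\alpha\,\frac{dx}{x}+\beta\,\frac{dy}{y}+dh
\qquad\text{with } xy\,a\in\Om_2 .
\]
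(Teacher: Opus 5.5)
Your proof is correct and follows essentially the paper's route: both solve the relative equation coefficientwise with divisors $n+m\lambda$ (your $(i-1)+\lambda(j-1)$ after writing $h=g/(xy)$), absorb the single resonant coefficient (the $x^{-1}y^{-1}\,dx\wedge dy$ term, i.e.\ your $xy$ coefficient of $F$) into $\alpha\,dx/x+\beta\,dy/y$, and recover $a$ by dividing by the $dy$-coefficient of $\omega_0$. Your diagonal operator $L$ packages the computation more cleanly than the paper's case-by-case boundary bookkeeping. Your norm argument for $|m+\lambda n|$ replaces the paper's appeal to irrationality and the diophantine bound, and correctly shows that the hypothesis is automatic for $\lambda\notin\mathbb R$, as the paper itself remarks in its discussion of diophantine conditions.
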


For
$\lambda=(\lambda_1,\ldots,\lambda_n)\in \mathbb C^n$ and
$k=(k_1,\ldots,k_n)\in\mathbb Z^n$, put
$|k|=|k_1|+\cdots+|k_n|$. We say that the residues satisfy the
\emph{integer-index Diophantine
condition} if there are $c>0$ and $\delta\geq0$ such that
\[
 \left|\sum_{j=1}^n k_j\lambda_j\right|
 \geq\frac{c}{(1+|k|)^\delta},
 \qquad k\in\mathbb Z^n\setminus\{0\}.
\]

\begin{definition}[general position germs]
\label{def:two-component-general-position}
Let $F,G\in\mathcal O_m$ be reduced germs vanishing at the origin. We say
that $F$ and $G$ are in \emph{general position at the origin} if
\[
 dF(0)\wedge dG(0)\neq0.
\]
Equivalently, $(F,G):(\C^m,0)\to(\C^2,0)$ is a submersion. In particular,
there are local coordinates $(x,y,z_3,\ldots,z_m)$ in which $F=x$ and
$G=y$.
\end{definition}

Let
\(
 \Omega=\sum_{j=1}^m A_j\,dz_j
\)
be a germ of holomorphic $1$-form at $0\in\C^m$. We say that $\Omega$ is
\emph{reduced} if its coefficients have no nonunit common factor.
Equivalently, its zero set has codimension at least two. A divisor germ
$D$ at $0\in\C^m$ is called \emph{reduced} if it admits a local equation
$D=(f=0)$ in which $f$ has no repeated irreducible factor. If $\Omega$ is
reduced and integrable and defines a foliation $\mathcal F$, we say that
$D$ is \emph{$\mathcal F$-invariant} if
\[
 f\ \text{divides}\ \Omega\wedge df.
\]
Equivalently, the regular part of every irreducible component of $D$ is
tangent to $\mathcal F$.

\begin{definition}[general position embedding]
\label{def:section-general-position}
Let $\Omega$ be a reduced holomorphic integrable $1$-form defining a
codimension-one foliation $\mathcal F$ at $0\in\C^m$, and let $D$ be a
reduced $\mathcal F$-invariant divisor. An embedding
\[
 \zeta:(\C^2,0)\longrightarrow(\C^m,0)
\]
is said to be in \emph{general position with respect to $(\mathcal F,D)$}
if:
\begin{enumerate}[label=\textup{(\roman*)}]
 \item $\zeta^{-1}(\operatorname{Sing}\mathcal F)=\{0\}$ and
 $\operatorname{Sing}(\zeta^*\Omega)$ has codimension two in $(\C^2,0)$;
 \item for every $p\neq0$ at which $\mathcal F$ is regular,
 \[
  d\zeta_p(T_p\C^2)\not\subset T_{\zeta(p)}\mathcal F,
 \]
 or equivalently $\zeta^*\Omega(p)\neq0$;
 \item $\zeta$ is transverse, outside the origin, to every smooth stratum
 of $D$.
\end{enumerate}
This definition is independent of the chosen reduced integrable form
defining $\mathcal F$.
\end{definition}

If $F$ and $G$ are in general position at the origin in the sense of
Definition~\ref{def:two-component-general-position}, the submersion theorem
gives local coordinates $(x,y,z_3,\ldots,z_m)$ in which $F=x$ and $G=y$.
In these coordinates, the embedding
\[
 \zeta(x,y)=(x,y,0,\ldots,0)
\]
is in general position with respect to the associated two-component
logarithmic foliation and its invariant divisor, in the sense of
Definition~\ref{def:section-general-position}. Moreover,
\[
 F\circ\zeta=x,\qquad G\circ\zeta=y.
\]

We also prove a meromorphic extension theorem allowing the passage from a
two-dimensional section to the local dimension $m\geq3$ case. Its proof is
an adaptation of the extension argument in~\cite{BerthierCerveau}.

\begin{theoremB}
\phantomsection\label{thm:B}
{\em Let
\(
 \omega=\sum_{j=1}^n\lambda_j\frac{df_j}{f_j}
\)
be a germ of logarithmic $1$-form at $0\in\C^m$, $m\geq2$, where
$D=(\prod_jf_j=0)$ is reduced and $\lambda_j\in\C^*$ for every $j$.
Let $\eta$ be a germ of meromorphic $1$-form at
$0\in\C^m$ such
that:
\begin{enumerate}[label=\textup{(\arabic*)}]
 \item $d\eta\wedge\omega=0$;
 \item the polar set of $\eta$ is contained in $D$, and $\eta$ has finite
 pole order along every irreducible component of $D$.
\end{enumerate}
Let
\[
 \zeta_0:(\C^2,0)\longrightarrow(\C^m,0)
\]
be an embedding in general position with respect to
$(\mathcal F_\omega,D)$ in the sense of
Definition~\ref{def:section-general-position}. Suppose that
there exist $a_0,h_0\in\Mm_2$ and constants $c_1,\ldots,c_n\in\C$ such that
the polar sets of $a_0,h_0$ are contained in the restricted divisor, both
functions have finite pole order there, and
\[
 \zeta_0^*\eta
 =a_0\zeta_0^*\omega+dh_0
 +\sum_{j=1}^n c_j
 \frac{d(f_j\circ\zeta_0)}{f_j\circ\zeta_0}.
\]
Then there exist $a,h\in\Mm_m$ such that:
\begin{enumerate}[label=\textup{(\arabic*)}]
 \item $a\circ\zeta_0=a_0$ and $h\circ\zeta_0=h_0$;
 \item the polar sets of $a,h$ are contained in $D$, and both functions
 have finite pole order along $D$;
 \item
 \[
  \eta=a\omega+dh+\sum_{j=1}^n c_j\frac{df_j}{f_j}.
 \]
\end{enumerate}
If, moreover, with $P=\prod_{j=1}^n f_j$,
\[
 P\eta\in\Omega_m^1,
 \qquad (P\circ\zeta_0)a_0\in\mathcal O_2,
 \qquad (P\circ\zeta_0)h_0\in\mathcal O_2,
\]
then $a,h$ may be chosen so that
\[
 Pa\in\mathcal O_m,\qquad Ph\in\mathcal O_m.
\]
}
\end{theoremB}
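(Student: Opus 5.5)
We follow the Mattei--Moussu / Berthier--Cerveau extension scheme. First we reduce to the case $c_j=0$ by replacing $\eta$ with $\eta'=\eta-\sum_j c_j\,df_j/f_j$. This is harmless: $d(df_j/f_j)=0$, so $d\eta'\wedge\omega=0$, the polar set of $\eta'$ still lies in $D$ with finite order, and $\zeta_0^*\eta'=a_0\zeta_0^*\omega+dh_0$.

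Next we pass to a multiplicative closed form. Since $\omega$ is closed and $d\eta\wedge\omega=0$, the form $\eta\wedge\omega$ is closed. The strategy is to produce, away from $D\cup\operatorname{Sing}\mathcal F_\omega$, a local solution of $\eta=a\omega+dh$ and then to show that it extends. Near a regular point $q\notin D$ we have $\omega=dg$ for a branch $g$ of $\log\prod f_j^{\lambda_j}$. In coordinates $(g,w)$ the condition $d\eta\wedge dg=0$ says that the restriction of $\eta$ to each leaf $\{g=\text{const}\}$ is closed. Hence $h$ is determined on each leaf up to a function of $g$, and $a$ is then determined by $\eta-dh=a\,dg$.

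The normalization comes from the section. By condition (ii) of Definition~\ref{def:section-general-position}, $\zeta_0(\C^2)$ is transverse to $\mathcal F_\omega$ off the origin, so the leaves of the restricted foliation are the traces of the ambient leaves. We therefore fix $h$ on each local leaf by requiring $h=h_0$ on its intersection with the section. This is possible because on the section $d(h_0)|_{\text{leaf}}=\zeta_0^*\eta|_{\text{leaf}}$. We then set $a=(\eta-dh)/\omega$.

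To make this global on a neighborhood $U\setminus(D\cup\operatorname{Sing})$ we use a tubular or fibration structure. By the transversality conditions (i)--(iii), after shrinking there is a holomorphic retraction $\pi:U\to\zeta_0(\C^2)$ whose fibers are transverse to $\mathcal F_\omega$ and to the strata of $D$ outside the origin. We then lift each point of the section along fiber paths inside its leaf, which is the standard Mattei--Moussu leafwise path-lifting argument. The value of $h$ at $p$ is $h_0(\pi\text{-base})+\int_\gamma\eta$ along a leafwise path $\gamma$ from the section to $p$. Well-definedness requires that $\int\eta$ over leafwise loops, projected to the section, agrees with the integral of $dh_0$. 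This holds because the section meets every leaf of $U\setminus(D\cup\operatorname{Sing})$ in a set whose inclusion induces a surjection on leafwise fundamental groups, a Lefschetz/Hamm--Lê-type statement for general-position sections. We expect this to be the main obstacle. We would first try to derive it from transversality together with the fact that $\pi$ is a locally trivial fibration with contractible fibers off $D$, so that loops in leaves can be homotoped into the section.

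This gives $h$ and $a$ holomorphic on $U\setminus(D\cup\operatorname{Sing}\mathcal F_\omega)$, with $a\circ\zeta_0=a_0$ and $h\circ\zeta_0=h_0$. Since $\operatorname{Sing}\mathcal F_\omega\setminus D$ has codimension $\ge2$ (the form $P\omega$ can be taken reduced), Hartogs extends $h$ and $a$ across it. It remains to control the behavior along $D$, and for this we use the finite pole orders. Along each component $\{f_j=0\}$ the local branch choices and the lifting are uniform. Pole-order estimates on fibers of $\pi$, using Cauchy estimates on the transversal discs, bound $|h|$ and $|a|$ by $C|P|^{-N}$. By Riemann extension, $P^Nh$ and $P^Na$ are holomorphic, so $a,h\in\Mm_m$ with poles in $D$. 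Finally, in the refined statement with $P\eta$ holomorphic and $(P\circ\zeta_0)a_0,(P\circ\zeta_0)h_0$ holomorphic, the same estimates with $N=1$ give $Pa,Ph\in\mathcal O_m$. The reason is that $h-h\circ\pi$ is the integral of $\eta$ along paths of bounded length in the fiber, so its growth is governed by that of $\eta$, i.e. it is at worst $O(|P|^{-1})$.
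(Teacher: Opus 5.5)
There is a genuine gap, and it is the step you flagged yourself: the global leafwise continuation. For $h(p)=h_0(s)+\int_\gamma\eta$ to be single-valued and satisfy $h\circ\zeta_0=h_0$, you need $\int_\delta\eta=h_0(s')-h_0(s)$ for \emph{every} path $\delta$ in a leaf $L$ of $\mathcal F_\omega|_{U\setminus(D\cup\operatorname{Sing}\mathcal F_\omega)}$ joining two points $s,s'\in L\cap S$. The natural sufficient condition is that the pair $(L,L\cap S)$ be $1$-connected. That is stronger than the $\pi_1$-surjectivity you state, since it also forbids distinct components of $L\cap S$ from being joined inside $L$.

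Nothing in Definition~\ref{def:section-general-position} gives this. The definition constrains only $S$ itself. Meanwhile $\operatorname{Sing}\mathcal F_\omega$ is an analytic set of dimension up to $m-2$ through the origin (for $n\ge2$ it contains every $\{f_i=f_j=0\}$), it lies off $S$ in every neighbourhood of $0$, and the leaves you integrate along pass arbitrarily close to it.

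Your retraction does not supply the missing homotopies either. If its fibres are transverse to $\mathcal F_\omega$, the fibre path from $\pi(p)$ to $p$ is not leafwise. So it can neither serve as $\gamma$ nor push leafwise loops into $S$. A retraction with fibres inside the plaques exists in flow boxes, but not across the singular set. The last step has the same defect: the bound $|h|,|a|\le C|P|^{-N}$ is needed uniformly up to $D$, including $D\cap\operatorname{Sing}\mathcal F_\omega$ (corners and singular points of components). It is asserted without argument, and you have no control of leafwise paths there. The "bounded length in the fibre" justification of the simple-pole refinement fails for the same reason.

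The missing idea is that $h$ never has to be constructed near the singular set. The paper works only over a compact crown $K=\overline{\Delta^2_{R_1}\setminus\Delta^2_r}\times\{0\}\subset S\setminus\{0\}$:
\begin{enumerate}
\item General position makes $\mathcal F_\omega$ regular on a product neighbourhood $K\times\Delta^{m-2}_\rho$. Near each point, $D$ is at most one smooth invariant component, which can be taken as the level $u=0$ of a local first integral.
\item The flow-box extension lemma then extends the decomposition locally: holomorphically off $D$, with finite poles along it.
\item Extensions normalized on $S$ are unique, because two of them differ by $\varphi(u)$ with $\varphi(u)|_S=0$ and $u|_S$ a submersion. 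So they glue over the crown with no topological input.
\item Compactness gives a uniform pole order $N$, so $P^NA'$ and $P^NH'$ are holomorphic on a product Hartogs figure.
\item The parameterized Hartogs theorem in the two section variables extends them over the full polydisc, across $\operatorname{Sing}\mathcal F_\omega$ and the singular part of $D$ at once. The identity principle carries $\eta-\sum_jc_j\,df_j/f_j=A\,P\omega+dH$ along, with $a=PA$.
\end{enumerate}
The simple-pole refinement likewise comes from tracking orders in the flow-box integration ($PH_p$ and $P^2A_p$ holomorphic) before the same Hartogs step, not from path-length estimates.
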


Using Theorem~A, induction along the resolution of singularities of
$\mathcal F_{\omega_0}$ at $(\C^2,0)$, and gluing techniques inspired by
Berthier--Cerveau~\cite{BerthierCerveau} and adapted to the present
meromorphic framework, we obtain:

\begin{theoremC}
{\em Let
\[
 \omega_0=\sum_{j=1}^n\lambda_j\frac{df_j}{f_j}
\]
be a germ of non-dicritical logarithmic form at $0\in\C^2$, whose
residues $\lambda_1,\ldots,\lambda_n$ are $\mathbb Z$-linearly independent
and satisfy the integer-index diophantine condition.
Assume that $D=(\prod_{j=1}^nf_j=0)$ is reduced.
Suppose that the open leaves of $\mathcal F_{\omega_0}$ are simply connected.
Assume also that the reduction of $\mathcal F_{\omega_0}$ is free of nodal
separators.
Let $\eta$ be a germ of meromorphic $1$-form at $0\in\C^2$ such that
\[
 \left(\prod_{j=1}^n f_j\right)\eta
\]
is holomorphic and $d\eta\wedge\omega_0=0$. Then there are meromorphic
germs $a,h\in\Mm_2$ and constants $c_1,\ldots,c_n\in\C$ such that
\[
 \eta=a\omega_0+dh+\sum_{j=1}^n c_j\frac{df_j}{f_j}.
\]
}
\end{theoremC}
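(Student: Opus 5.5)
The plan is to solve the problem locally on the reduction of singularities and then glue. Let $\pi:(\widetilde M,E)\to(\C^2,0)$ be the minimal reduction of $\mathcal F_{\omega_0}$, obtained by a finite sequence of point blow-ups. Since $\omega_0$ is non-dicritical, every component of $E$ is invariant. The pull-back $\pi^*\omega_0$ is again logarithmic: it is $\sum_j\lambda_j\,d\widetilde f_j/\widetilde f_j$ plus terms $\mu_k\,dt_k/t_k$ along the exceptional components $E_k$. Each $\mu_k=\sum_j \nu_k(f_j)\lambda_j$ is a positive-integer combination of the $\lambda_j$, where $\nu_k$ is the vanishing order.

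First I will check what the hypotheses give locally. At each singular point $p$ of $\pi^*\mathcal F$ (a corner or the intersection of a strict transform with $E$), in local coordinates we have $\pi^*\omega_0=\mu\,du/u+\mu'\,dv/v+(\text{holomorphic closed})$. After a local change of coordinates this is a unit multiple of $dv/v-\lambda\,du/u$ with $\lambda=-\mu/\mu'$. The $\mathbb Z$-linear independence of the residues makes $\lambda\notin\R$ away from nodal points. The integer-index Diophantine condition, applied to $m\mu+n\mu'=\sum_j(m\nu(f_j)+n\nu'(f_j))\lambda_j$, gives the positive-cone condition of Theorem~A with a possibly different $c,\delta$ (note $|k|\le C(m+n)$). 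Nodal points, where the ratio is real negative, are excluded by the hypothesis of no nodal separators: they can only occur on a single component whose holonomy is handled directly. Theorem~A at each such point produces $\alpha_p,\beta_p$ and $h_p$ with
\[
\pi^*\eta=a_p\,\pi^*\omega_0+dh_p+\alpha_p\frac{du}{u}+\beta_p\frac{dv}{v}.
\]
Near regular points of $E$, or of a strict transform, the foliation is $d(\text{coordinate})$ up to the log term. There $d\eta\wedge\omega_0=0$ lets us solve the equation by integrating along the leaves, after subtracting the residue $\operatorname{Res}_{E_k}$ of $\pi^*\eta$.

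The second step is to make the constants coherent. Along a component $E_k$ the coefficient of $dt_k/t_k$ in the decomposition is not forced: $a\,\pi^*\omega_0$ can absorb multiples of $\mu_k\,dt_k/t_k$. So I will normalize by subtracting from $\eta$ a global combination $\sum_j c_j\,df_j/f_j$. The constants $c_j$ are chosen so that at a point of each strict transform $\widetilde f_j=0$ the residue of the remainder is zero modulo $\lambda_j$. The residues along exceptional components then become the corresponding integer combinations, and the remainder is $\equiv 0$ modulo $\pi^*\omega_0$ there as well. This is consistent with the local constants $\alpha_p,\beta_p$ from Theorem~A, because these are determined by residues of $\pi^*\eta$ modulo the line spanned by $(\mu,\mu')$.

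The third step is gluing the local $h_p$, working component by component of $E$, starting from the chain of the resolution tree. On overlaps, $d(h_p-h_q)\wedge\pi^*\omega_0=0$, so $h_p-h_q$ is a local first integral. Because $\lambda\notin\R$ and the Diophantine condition holds, a meromorphic first integral of $dv/v-\lambda\,du/u$ with poles on the axes must be constant. Hence the $h_p$ differ by constants, which we adjust along the tree of $E$ (a tree, so no cocycle obstruction). Next, extend $h$ from a neighborhood of $E$ by continuing along leaves: the open leaves are simply connected, so the leafwise primitive of $\eta|_L$ (exact on $L$ since $d\eta|_L=0$ and $\pi_1(L)=1$) is single-valued. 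This yields $h$ on a punctured neighborhood, and hence a germ by Hartogs/Levi extension with finite pole order along $D$. Finally, set $a=(\eta-dh-\sum c_j df_j/f_j)/\omega_0$, which is meromorphic by the Saito--de Rham division on the complement of $D$ with controlled poles.

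I expect the main obstacle to be the gluing across exceptional components, namely proving that leafwise continuation of the local solutions is single-valued and agrees with Theorem~A's $h_p$ at the next corner. This needs the holonomy of each $E_k$ to be non-resonant enough that the only holonomy-invariant meromorphic functions are constants, and this is exactly where the absence of nodal separators and the Diophantine condition are used. I would first try to reduce it to a statement about linearizable holonomy groups generated by $e^{2\pi i\mu_k/\mu_l}$, using $\mathbb Z$-independence to get non-torsion elements.
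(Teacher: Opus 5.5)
\paragraph{The gap: gluing along the exceptional divisor.}
Your local steps match the paper: corner solutions, flow boxes at regular points of $\widetilde D$, and subtracting $\sum_j c_j\,df_j/f_j$ fixed on the strict transforms so that exceptional residues become $\sum_j m_{Cj}c_j$, hence lie in $\C\rho_C$. The gap is in the gluing.

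You argue that $h_p-h_q$ is a first integral of $dv/v-\lambda\,du/u$ and is therefore constant. That is only valid on a full neighborhood of a corner, or on a region containing loops around it. A finite cover of $E$ necessarily has overlaps near regular points of the components $E_k$. There the foliation is a product, and every meromorphic function $\phi(t)$ of the transverse coordinate is a first integral. So the differences are transverse germs, not constants.

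The real obstruction lives on each component separately, in the holonomy of $E_k$ along $E_k\setminus\operatorname{Sing}$. Its fundamental group is free of rank $s-1$ when $E_k$ carries $s$ singular points, so the fact that the dual graph is a tree is irrelevant here. The paper handles this with the additive cocycle $c_g(z)=h_\Sigma(g(z))-h_\Sigma(z)-\int_{\gamma_{g,z}}\Theta$, in four steps:
\begin{enumerate}
\item Simple connectivity of the open leaves makes leafwise continuation path-independent. Hence $c_g$ is a cocycle on the holonomy group, which is abelian and linear, $z\mapsto\exp(2\pi i\rho_{C'}/\rho_C)z$, in a logarithmic transverse coordinate.
\item For one non-torsion multiplier $\mu$, one solves $b(\mu z)-b(z)=c_{g_0}(z)$ coefficientwise. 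The small divisors $\mu^k-1$ are bounded below by applying the integer-index condition to $k\rho_{C'}-r\rho_C$. This is a second, independent use of the Diophantine hypothesis.
\item Commutativity then makes the remaining cocycle $g_0$-invariant, hence constant.
\item That constant is a logarithmic period, which the residue normalization kills.
\end{enumerate}

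Your closing remark anticipates only the uniqueness half (invariant germs are constant). It does not address the existence of $b$, nor why the cocycle descends to the holonomy group.

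Relatedly, you use simple connectivity in the wrong place. A meromorphic function on a neighborhood of $E$ already descends through the proper map $\pi$ to a germ at $0$, so your final leafwise continuation is superfluous. Meanwhile the gluing around $E$, which genuinely needs simple connectivity, is left unproved.

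\paragraph{A secondary problem at the corners.}
$\mathbb Z$-linear independence does not make $\lambda=-\mu/\mu'$ non-real (take all $\lambda_j$ real). So Theorem~A as stated does not apply, although its proof does go through for irrational $\lambda$ under the positive-cone estimate.

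More seriously, the divisors in Theorem~A are $n+\lambda m=(n\mu'-m\mu)/\mu'$, with $n\mu'-m\mu=\sum_j(n\nu'_j-m\nu_j)\lambda_j$, not $m\mu+n\mu'$. These coefficients have mixed signs. They vanish when the multiplicity vectors of the two components meeting at a corner are proportional. This can occur between exceptional components: resolving $y\pm x^2$ gives vectors $(1,1)$ and $(2,2)$ at the corner $E_1\cap E_2$. There $\lambda\in\mathbb Q$, so you must either exclude such corners using the hypotheses or treat the resonant monomials separately, as the paper does.
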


For $n=2$, the simple connectivity hypothesis is assured when $f_1,f_2$
are in general position at $0\in\C^2$ in the sense of
Definition~\ref{def:two-component-general-position} (cf. the proof of  Lemma~\ref{lemma:simplyconnectedreduced}).

We next record an auxiliary homogeneous rigidity result. It is used later
to treat homogeneous deformations with a fixed invariant divisor and also
serves as the model for the several-component homogeneous statement.

If $R\in\C[z_1,\ldots,z_m]$ is reduced and $\Omega$ is a polynomial
$1$-form, we say that the hypersurface $(R=0)$ is
\emph{invariant by $\Omega$} if
\[
 R\ \text{divides}\ \Omega\wedge dR.
\]
This definition does not require $\Omega$ to be integrable. When $\Omega$
is integrable and defines a foliation, it is equivalent to the usual
invariance of the regular part of $(R=0)$ by that foliation.

\begin{definition}
\label{def:general-projective-position}
Let $P,Q\in\C[z_1,\ldots,z_m]$ be reduced irreducible homogeneous
polynomials. We say that $P$ and $Q$ are in \emph{general projective
position} if the projective hypersurfaces
\[
 \mathbb P(P=0),\qquad \mathbb P(Q=0)\subset\mathbb{CP}^{m-1}
\]
are smooth and intersect transversely.
\end{definition}

\begin{theoremD}
{\em Let $m\geq3$ and let $P,Q\in\C[z_1,\ldots,z_m]$ be reduced irreducible
homogeneous polynomials of the same degree and in general projective
position. If $\Omega$ is a homogeneous
polynomial $1$-form such that
\(
 \deg\Omega=2\deg P-1
\)
and both $(P=0)$ and $(Q=0)$ are invariant by $\Omega$, then
\[
 \frac{\Omega}{PQ}=a\frac{dP}{P}+b\frac{dQ}{Q}
\]
for some constants $a,b\in\C$.}
\end{theoremD}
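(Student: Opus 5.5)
The plan is to reduce the statement to a computation of global logarithmic $1$-forms on $\mathbb{CP}^{m-1}$, after first removing the radial part of $\Omega$. Let $d=\deg P=\deg Q$ and let $R=\sum_j z_j\partial_{z_j}$ be the radial vector field. Then $\Omega$ has homogeneous coefficients of degree $2d-1$, and $g:=i_R\Omega$ is a homogeneous polynomial of degree $2d$.

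\textbf{Step 1 (local division).} I first show that $P$ and $Q$ divide $g$. Invariance gives $\Omega\wedge dP=P\theta$ for a polynomial $2$-form $\theta$. Wedging with $dP$ gives $\theta\wedge dP=0$. Since $\mathbb P(P=0)$ is smooth, the common zero set of $\partial P/\partial z_j$ is $\{0\}$, which has codimension $m\ge3$. By the de Rham--Saito division lemma, $\theta=\beta\wedge dP$ for a $1$-form $\beta$. Hence $(\Omega-P\beta)\wedge dP=0$, and dividing once more gives
\[
 \Omega=a\,dP+P\beta
\]
with $a$ homogeneous of degree $d$. Contracting with $R$ and using Euler's identity gives $g=P(d\,a+i_R\beta)$. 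The same argument for $Q$ shows $Q\mid g$.

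Transversality forces $P$ and $Q$ to be non-associated irreducibles. Hence $PQ\mid g$, and by degree $g=c\,PQ$ for a constant $c$. Since $i_R(Q\,dP)=i_R(P\,dQ)=d\,PQ$, replacing $\Omega$ by $\Omega-\tfrac{c}{d}\,Q\,dP$ reduces us to the case $i_R\Omega=0$. This modification does not affect the conclusion and preserves the invariance of $(P=0)$ and $(Q=0)$.

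\textbf{Step 2 (descent to projective space).} With $i_R\Omega=0$, the rational form $\omega:=\Omega/(PQ)$ is homogeneous of degree $0$ and annihilates $R$. It therefore descends to a rational $1$-form $\bar\omega$ on $\mathbb{CP}^{m-1}$ with poles on $D=D_P\cup D_Q$, where $D_P=\mathbb P(P=0)$ and $D_Q=\mathbb P(Q=0)$. The divisor $D$ is a simple normal crossing divisor by Definition~\ref{def:general-projective-position}.

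I then check that $\bar\omega$ is a logarithmic form in Saito's sense. The poles of $\bar\omega$ are at most simple since $PQ\,\omega=\Omega$ is holomorphic. From Step 1, locally near $D_P$ we have $\omega=(a/Q)\,dP/P+\beta/Q$, and similarly near $D_Q$. In local coordinates where $D=(x_1x_2=0)$, this shows that $\bar\omega$ is a holomorphic combination of $dx_1/x_1$, $dx_2/x_2$ and holomorphic forms. Thus $\bar\omega\in H^0(\mathbb{CP}^{m-1},\Omega^1(\log D))$.

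\textbf{Step 3 (residue sequence).} I use the exact sequence
\[
 0\to H^0(\Omega^1_{\mathbb P^{m-1}})\to H^0(\Omega^1(\log D))\xrightarrow{\ \mathrm{Res}\ }H^0(\mathcal O_{D_P})\oplus H^0(\mathcal O_{D_Q})\to H^1(\Omega^1_{\mathbb P^{m-1}})\cong\C .
\]
Here $H^0(\Omega^1_{\mathbb P^{m-1}})=0$. Since $m-1\ge2$, the smooth hypersurfaces $D_P$ and $D_Q$ are connected, so the residues of $\bar\omega$ are constants $r_P,r_Q$. The last map sends $(r_P,r_Q)$ to $r_P\deg P+r_Q\deg Q=d(r_P+r_Q)$, so $r_Q=-r_P$. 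The form $dP/P-dQ/Q$ is a global logarithmic form with residues $(1,-1)$. Since the first term vanishes, $\bar\omega=r_P(dP/P-dQ/Q)$. Pulling back, and because $\omega$ and this form both annihilate $R$, they agree on $\C^m$. Restoring the term $\tfrac{c}{d}\,dP/P$ removed in Step 1 gives the claim with $a=r_P+c/d$ and $b=-r_P$.

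\textbf{Main obstacle.} I expect the hardest step to be the logarithmic verification in Step 2. Only invariance and the pole order are given, not that $PQ\,d\omega$ is holomorphic. The division formulas of Step 1 must be transported through the quotient by the $\C^*$-action and checked along $D_P\cap D_Q$, where both expansions must be combined.

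I would first handle the points of $D_P\setminus D_Q$ and $D_Q\setminus D_P$ using the displayed local expansions. At $D_P\cap D_Q$, I would then use that the form is already logarithmic off this codimension-two set and apply Hartogs extension to the coefficients with respect to the frame $dx_1/x_1,\,dx_2/x_2,\,dx_3,\ldots$.
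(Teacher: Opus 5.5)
Your proof is correct, but it takes a different route from the paper's. The paper never descends to $\mathbb{CP}^{m-1}$. It works on the punctured cone $\C^m\setminus\{0\}$, where local division at smooth points of the divisor, together with transversality at $(P=Q=0)$, shows that $\eta=\Omega/(PQ)$ is logarithmic. Each residue is then a $\C^*$-invariant holomorphic function on the cone over the compact irreducible hypersurface $\mathbb P(P=0)$, resp.\ $\mathbb P(Q=0)$, hence a constant $a$, resp.\ $b$. Finally $\eta-a\,dP/P-b\,dQ/Q$ is a holomorphic $1$-form on $\C^m\setminus\{0\}$; it extends over the origin by Hartogs and vanishes because its coefficients are homogeneous of degree $-1$.

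Upstairs the two residues are independent, so the paper needs neither your radial normalization ($i_R\Omega=c\,PQ$ and the subtraction of $\tfrac{c}{d}Q\,dP$) nor the residue sequence. The step ``Hartogs plus negative homogeneity'' replaces $H^0(\Omega^1_{\mathbb P^{m-1}})=0$, and the connecting map to $H^1(\Omega^1_{\mathbb P^{m-1}})$ never appears.

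What your route buys is a conceptual account of the two constants. One is the radial contraction $i_R(\Omega/PQ)=\deg P\,(a+b)$. The other is the coefficient of the unique (up to scale) logarithmic form $d\log(P/Q)$ on $\mathbb{CP}^{m-1}$ with poles on $\mathbb P(PQ=0)$, and the relation $r_P+r_Q=0$ is the residue theorem there.

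Two minor points. The global Saito division in Step~1 is more than you need: division at points of $(P=0)\setminus\{0\}$, where $dP\neq0$, already shows that $i_R\Omega$ vanishes on $(P=0)$, hence is divisible by the irreducible $P$. And your Hartogs argument in the frame $dx_1/x_1,dx_2/x_2,dx_3,\ldots$ correctly fills the step at $D_P\cap D_Q$ that the paper dispatches in one line.
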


We now return to the relative-cohomology problem in the global polynomial
setting. The local decomposition can be propagated directly along the
ambient leaves and then globalized by the Levi extension theorem. This
gives the following result in arbitrary dimension.

\begin{theoremE}
\phantomsection\label{thm:E}
{\em Let $m\geq2$ and let
$P,Q\in\C[z_1,\ldots,z_m]$ be irreducible polynomials. Assume that the
union of their projective closures with the hyperplane at infinity
$H_\infty$ is a simple normal-crossings divisor in $\mathbb{CP}^m$ and
that the projective closures of $(P=0)$ and $(Q=0)$ meet in the affine
part. Let
$\lambda\in\C\setminus\mathbb R$ satisfy the integer-index
diophantine condition
\[
 |\lambda i+j|>\frac{c}{(|i|+|j|)^\delta},\qquad
 \forall(i,j)\in\mathbb Z^2\setminus\{(0,0)\},
\]
and let
\[
 \omega_0=\frac{dQ}{Q}-\lambda\frac{dP}{P}.
\]
Assume that the open affine leaves of $\omega_0$ in $\C^m$ are simply
connected.

Given a polynomial $1$-form $\Omega$ on $\C^m$, let
$\eta=\Omega/(PQ)$. Then the following conditions are equivalent:
\begin{enumerate}[label=\textup{(\arabic*)}]
 \item $d\eta\wedge\omega_0=0$;
 \item there are rational functions
 $a,h\in\C(z_1,\ldots,z_m)$ and constants
 $\alpha,\beta\in\C$ such that
 \[
  \eta=a\omega_0+dh+\alpha\frac{dP}{P}+\beta\frac{dQ}{Q}.
 \]
\end{enumerate}
}
\end{theoremE}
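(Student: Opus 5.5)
The implication (2)$\Rightarrow$(1) is a direct computation. The form $\omega_0$ is closed, and $dh$ and the logarithmic terms $dP/P$, $dQ/Q$ are closed as well. Hence $d\eta=da\wedge\omega_0$, and therefore $d\eta\wedge\omega_0=0$. All the work is in (1)$\Rightarrow$(2). The plan is to produce the decomposition locally everywhere, continue it along the ambient leaves, and then globalize with the Levi extension theorem.

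\textbf{Step 1: local decompositions.} For (1)$\Rightarrow$(2), take a point $p$ in the affine part. There are three cases.
\begin{itemize}
\item If $P(p)Q(p)\neq0$, then $\omega_0$ is a nonvanishing closed form near $p$. Since $d\eta\wedge\omega_0=0$, the Frobenius/de Rham division lemma lets us write $d\eta=\theta\wedge\omega_0$ locally. Locally $\omega_0=dF$ for a first integral $F$, and flowing along the fibres of $F$ gives $\eta=a\,dF+dh$ near $p$.
\item If $p$ lies on exactly one of the hypersurfaces, say $P$, the simple normal crossings hypothesis gives coordinates with $P=x$. Then $\omega_0=u-\lambda\,dx/x$ with $u$ holomorphic, and a one-variable residue computation gives the decomposition with a constant $\beta$ in front of $dx/x$.
\item At points of $(P=0)\cap(Q=0)$, transversality gives coordinates with $P=x$, $Q=y$. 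The embedding $\zeta(x,y)=(x,y,0,\dots,0)$ is in general position, as noted after Definition~\ref{def:section-general-position}. Theorem~A applies on the section, since the positive-cone Diophantine condition follows from the integer-index one. Theorem~B then extends the decomposition to a neighbourhood in $\C^m$ with constants $\alpha,\beta$ and $PQa,PQh$ holomorphic.
\end{itemize}

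\textbf{Step 2: constancy of the constants.} The coefficient of $dP/P$ in any local decomposition equals the residue of $\eta$ along $(P=0)$ minus $a\lambda$ restricted there, up to normalization. I would compare two overlapping decompositions, $(a-a')\omega_0+d(h-h')+(\alpha-\alpha')dP/P+(\beta-\beta')dQ/Q=0$. Wedging with $\omega_0$ and restricting to leaves shows that $h-h'$ plus a logarithmic combination is a first integral. Non-resonance ($\lambda\notin\R$ and the Diophantine condition forbid meromorphic first integrals of the form $P^{k}Q^{l}$) forces the differences to vanish. Hence $\alpha,\beta$ are locally constant along the irreducible, hence connected, smooth parts of $(P=0)$ and $(Q=0)$. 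These hypersurfaces meet in the affine part, so one global pair $(\alpha,\beta)$ can be used. Also, $a$ is determined by $h$ modulo $\omega_0$, and $h$ is determined up to additive constants along leaves.

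\textbf{Step 3: leafwise continuation, the main obstacle.} Set $\tilde\eta=\eta-\alpha\,dP/P-\beta\,dQ/Q$. On each open leaf $L$, the restriction $\tilde\eta|_L$ is closed. Since $L$ is simply connected by hypothesis, it has a primitive $h_L$. We must choose the additive constants so that the $h_L$ assemble into a holomorphic function on $\C^m\setminus(PQ=0)$ agreeing with the local $h$ of Step 1. I would first fix $h$ near a point of $(P=0)\cap(Q=0)$ where Theorem~A gives it. Every leaf accumulates there, because non-real $\lambda$ makes the leaves $Q=cP^\lambda$ dense near the divisor. So each leaf meets that neighbourhood, and $h$ is defined by continuing along paths in the leaf. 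Well-definedness comes from simple connectivity, combined with the uniqueness of Step 2 to handle a leaf re-entering the neighbourhood in several plaques. Holomorphic dependence transversally follows from the local decompositions. Once $h$ is defined, $a$ is defined by $\tilde\eta-dh=a\omega_0$, via the division lemma.

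\textbf{Step 4: rationality.} By Step 1, $PQh$ and $PQa$ extend holomorphically across $(PQ=0)$ in $\C^m$. To get rationality, I would check moderate growth at $H_\infty$. In charts of $\mathbb{CP}^m$ near $H_\infty$ the configuration is still simple normal crossings, and $\eta$ has a pole of finite order along $H_\infty$. The local analysis of Step 1, applied with $H_\infty$ as an extra logarithmic component, shows that $h$ is meromorphic near generic points of $H_\infty$. Levi's extension theorem (meromorphic off a codimension-two set) then extends $h$ meromorphically to $\mathbb{CP}^m$, so $h$ is rational. Since $a=(\tilde\eta-dh)/\omega_0$, the function $a$ is rational as well. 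The delicate point is the behaviour at infinity, because the residue of $\omega_0$ along $H_\infty$ is $\lambda\deg P-\deg Q$. If the Theorem~A-type argument fails there, I would instead use the finite-order growth of $h$ along paths, with polynomial coefficients, to bound $h$ polynomially.
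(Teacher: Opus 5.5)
\textbf{Verdict: genuine gap.} The implication (2)$\Rightarrow$(1) and your Step~1 are fine and agree with the paper (Theorem~A on the section, then Theorem~B). The gap is in Steps~3--4. You claim that ``by Step~1'' $PQh$ extends holomorphically across $(PQ=0)$. You also claim that Step~1, with $H_\infty$ as an extra component, makes $h$ meromorphic at generic points of $H_\infty$. Neither follows.

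Step~1 only supplies \emph{some} local primitive $h_V$. The $h$ you obtain by leafwise continuation from $p_0$ differs from $h_V$ by a local first integral. Near a smooth point of a component (or of $H_\infty$), this first integral is $\phi(v)$, where $v$ is a local first integral vanishing on that component. All you know about $\phi$ is that it is holomorphic on a punctured disc, so nothing excludes an essential singularity or bounds its pole order. Meromorphy has to be transported along the continuation itself:
\begin{enumerate}
\item follow the divisor through finitely many flow boxes and crossing charts;
\item at each crossing, use the Laurent-coefficient argument with the Diophantine bound;
\item at $\overline{(Q=0)}\cap H_\infty$, use the nonzero residue $\lambda\deg P-\deg Q$, the integer-index estimate inherited by the pair $(\deg Q-\lambda\deg P,1)$, and finite rather than simple poles (since $\deg\Omega$ is arbitrary).
\end{enumerate}
Your fallback of polynomial bounds along paths has no content, because nothing controls the paths out to infinity. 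Two further problems with Step~3: local density of leaves near $p_0$ does not show that every open leaf of $\C^m$ enters a fixed neighbourhood of $p_0$; and a continuation over all of $\C^m\setminus(PQ=0)$ would have to get past zeros of $P\,dQ-\lambda Q\,dP$ off the divisor.

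The missing idea is that no such global information is needed. The paper continues $h$ only to a neighbourhood of the compact hypersurface $\overline{(Q=0)}\subset\mathbb{CP}^m$. It goes along $(Q=0)$, through its crossings with $(P=0)$ and with $H_\infty$, and then across the codimension-three set $Z$ by parameterized Hartogs. It then applies the Levi-type theorem (Siu; Camacho--Lins Neto--Sad, Lemma~5): a meromorphic function on a neighbourhood of a hypersurface of $\mathbb{CP}^m$ extends meromorphically to $\mathbb{CP}^m$, hence is rational. The codimension-two extension you invoke presupposes exactly what you have not established, namely meromorphy on $\mathbb{CP}^m$ off a small set.

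Step~2 also needs correcting.
\begin{enumerate}
\item At a smooth point of $(P=0)$ off $(Q=0)$, $dQ/Q$ is locally exact and $dP/P=-\omega_0/\lambda+d(\log Q)/\lambda$; symmetrically on $(Q=0)$. So at such points the constants are not determined at all, and ``locally constant along the smooth parts'' is vacuous.
\item At a crossing, two decompositions only satisfy $\Delta\alpha+\lambda\Delta\beta=0$; the differences need not vanish.
\end{enumerate}
Agreement of this class at different crossings (for instance, several affine intersection points when $m=2$) comes from single-valuedness of the continued primitive. Comparing it with a local primitive at another crossing, the constant Laurent coefficient forces the agreement, as in Lemma~\ref{lem:log-rigidity}.
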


We shall use the following geometric hypothesis in the several-component
homogeneous result.

\begin{definition}[very general position germs]
\label{def:very-general-position}
Let $P_1,\ldots,P_n\in\C[z_1,\ldots,z_m]$ be homogeneous polynomials. We
shall say that $P_1,\ldots,P_n$ are in \emph{very general position} if:
\begin{enumerate}[label=\textup{(\roman*)}]
\item each polynomial $P_j$ has singularities in an analytic set of
codimension at least $3$;
\item the polynomials $P_j$ are of ordinary-crossings type off an analytic
subset of codimension at least $3$: more precisely, there is an analytic
set $Z\subset\C^m$ of codimension at least $3$ such that, for every
$p\notin Z$, the differentials
\[
 \{dP_j(p):P_j(p)=0\}
\]
are linearly independent.
\end{enumerate}

Given germs $f_1,\ldots,f_n\in\Om_m$, we shall also say that they are in {\it very general
position} if their first nonzero homogeneous jets
$P_1,\ldots,P_n$, respectively, are in very general position in the
preceding sense.
\end{definition}

If $\eta=\Omega/(F_1\cdots F_n)$, we say that
$D=(F_1\cdots F_n=0)$ is \emph{$\eta$-invariant} when every irreducible
component $(F_j=0)$ is invariant by $\Omega$ in the sense introduced
before Theorem~D.

We finally obtain the following result for homogeneous $1$-forms:

\begin{theoremF}
\phantomsection\label{thm:F}
{\em 
Let $F_1,\ldots,F_n\in\C[z_1,\ldots,z_m]$ be reduced irreducible homogeneous
polynomials of the same degree $\nu$.
Assume either that $m=2$ and $n=2$, or that $m\geq3$ and
$F_1,\ldots,F_n$ are in very general position. Let $\Omega$ be a
homogeneous polynomial $1$-form of degree
$\deg\Omega=n\nu-1$ and let
\[
 \eta=\frac{\Omega}{F_1\cdots F_n}.
\]
Assume that $D=(\prod_{j=1}^nF_j=0)$ is $\eta$-invariant.
Then there are $c_1,\ldots,c_n\in\C$ such that
\[
 \eta=\sum_{j=1}^n c_j\frac{dF_j}{F_j}.
\]
}
\end{theoremF}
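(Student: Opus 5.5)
The plan is to show that $\eta$ is a logarithmic $1$-form along $D$ in Saito's sense, that its residues are constant by homogeneity, and that subtracting the logarithmic part leaves a holomorphic form of negative degree, which must vanish. Throughout, $\deg\Omega=n\nu-1$ means that the coefficients of $\Omega$ are homogeneous of degree $n\nu-1$. Hence the coefficients of $\eta$ are homogeneous rational functions of degree $-1$, exactly like those of each $dF_j/F_j$.

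\textbf{The case $m=n=2$.} This case I would settle by a direct computation. An irreducible homogeneous polynomial in two variables is linear, so $\nu=1$. Since $D$ is reduced, $F_1$ and $F_2$ are non-proportional, and we may take $F_1=x$, $F_2=y$. Write $\Omega=A\,dx+B\,dy$ with $A,B$ linear. Invariance of $(x=0)$ means $x\mid \Omega\wedge dx=-B\,dx\wedge dy$, so $B=\beta x$. Likewise invariance of $(y=0)$ gives $A=\alpha y$. Then $\eta=\alpha\,dx/x+\beta\,dy/y$.

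\textbf{The case $m\ge3$, Step 1: $\eta$ is logarithmic off $Z'$.} Let $Z'$ be the union of $Z$ from Definition~\ref{def:very-general-position}, the singular sets of the $F_j$, and the origin; it has codimension $\geq3$. Fix $p\notin Z'$ and let $F_1,\dots,F_r$ be the components through $p$. By hypothesis (ii) we can take local coordinates with $F_i=x_i$ for $i\le r$. Write $\Omega=\sum_k B_k\,dx_k$. Invariance gives $x_i\mid\Omega\wedge dx_i$, hence $x_i\mid B_k$ for every $k\neq i$. It follows that, near $p$,
\[
 \eta=\sum_{i\le r} g_i\,\frac{dx_i}{x_i}+\theta_p
\]
with $g_i$ and $\theta_p$ holomorphic, since the remaining factors $F_{r+1},\dots,F_n$ are units at $p$. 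So $\eta$ has at most logarithmic poles along $D$ off $Z'$.

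\textbf{Step 2: the residues are constant.} The residue $\operatorname{res}_{F_j}\eta=g_j|_{(F_j=0)}$ is well defined and holomorphic on $(F_j=0)\setminus Z'$. The hypersurface $(F_j=0)$ has singular locus of codimension $\ge2$ in itself, so it is normal. Since $Z'\cap(F_j=0)$ also has codimension $\geq2$ there, the residue extends holomorphically to all of $(F_j=0)$. Because the coefficients of $\eta$ have degree $-1$ and those of $dF_j/F_j$ have degree $-1$ as well, the residue is homogeneous of degree $0$ on the cone $(F_j=0)$. A function that is homogeneous of degree $0$ and continuous at the vertex $0$ is constant, say $c_j$.

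\textbf{Step 3: the difference vanishes.} Set
\[
 \theta=\eta-\sum_j c_j\,\frac{dF_j}{F_j}.
\]
By Step 1 this is logarithmic off $Z'$, and by Step 2 all its residues are zero, so it is holomorphic off $Z'$. Since $\operatorname{codim}Z'\ge2$, Hartogs' theorem extends $\theta$ to a holomorphic $1$-form on $\C^m$. Its coefficients are then homogeneous of degree $-1$ and holomorphic on all of $\C^m$, hence zero. This gives $\eta=\sum_j c_j\,dF_j/F_j$.

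\textbf{Main obstacle.} The delicate point is Step 2, namely showing that the residue is genuinely a single-valued, well-defined function on each component, and then extending it across singular and non-normal-crossing points. In Step 1, $g_j$ depends on the splitting only up to multiples of $x_j$, so its restriction to $(x_j=0)$ is intrinsic. The extension across $Z'\cap(F_j=0)$ is exactly where the codimension-$3$ hypotheses in Definition~\ref{def:very-general-position} are used: they make the bad set have codimension $\geq2$ inside each component, so that normality and Riemann extension apply. If normality of $(F_j=0)$ were in doubt, I would instead argue on the smooth part directly. There, homogeneity makes the residue constant along each ray, and the smooth part, being connected, would have to be handled via the projectivization, where the residue becomes a holomorphic function on a compact normal variety and is therefore constant.
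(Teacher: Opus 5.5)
Your proof is correct and follows the paper's route: local divisibility makes $\eta$ logarithmic off the codimension-three bad set, the degree-$0$ homogeneous residues are constant, and $\theta=\eta-\sum_j c_j\,dF_j/F_j$ extends by Hartogs with coefficients homogeneous of degree $-1$, so $\theta=0$. The differences are minor. For constancy of the residues, you extend across the vertex of the normal affine cone $(F_j=0)$ and use continuity at $0$, whereas the paper descends the residue to the compact projective hypersurface $\mathbb P(F_j=0)\subset\mathbb{CP}^{m-1}$. For $m=n=2$, you give an explicit computation where the paper only remarks that the conclusion is immediate.
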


In the final part we consider analytic deformations of logarithmic foliations
from the viewpoint of relative cohomology. We start with the local case,
obtaining:

We write $\mathcal M_m[[t]]$ for the formal power-series ring over the field
of meromorphic germs at $0\in\C^m$. Membership in this ring does not impose
a uniform bound on the pole orders of the coefficients; these orders may
increase with the power of $t$.

\begin{theoremG}
\phantomsection\label{thm:G}
{\em Let
\[
 \omega_0=\sum_{j=1}^n\lambda_j\frac{df_j}{f_j}
\]
be a non-dicritical logarithmic foliation germ at $0\in\C^2$. Assume that
$D=(f_1\cdots f_n=0)$ is reduced and that the residues are
$\mathbb Z$-linearly independent. Assume also that there exist constants
$c>0$ and $\delta\geq0$ such that
\[
 \left|\sum_{j=1}^n k_j\lambda_j\right|
 \geq\frac{c}{(1+|k|)^\delta},
 \qquad k=(k_1,\ldots,k_n)\in\mathbb Z^n\setminus\{0\},
\]
where $|k|=|k_1|+\cdots+|k_n|$. Suppose that the open leaves are simply
connected and that the reduction of $\mathcal F_{\omega_0}$ is free of
nodal separators. Put
\[
 P=f_1\cdots f_n,\qquad \Omega_0=P\omega_0.
\]
Given an analytic deformation by integrable holomorphic $1$-forms
\[
 \Omega_t=\Omega_0+\sum_{k=1}^{\infty}t^k\Omega_k,
\]
there are a formal unit 
\(
 \widehat U=\prod_{k=1}^{\infty}(1+t^ka_k)\in\mathcal M_2[[t]],
\)
a formal meromorphic function
\[
 \widehat H=\sum_{k=1}^{\infty}t^kh_k\in\mathcal M_2[[t]],
\]
and formal series
\[
 \widehat c_j(t)=\sum_{k=1}^{\infty}c_j^kt^k\in t\C[[t]],
 \qquad j=1,\ldots,n,
\]
such that
\[
 \frac{\Omega_t}{P\widehat U}
 =\omega_0+d\widehat H+
 \sum_{j=1}^n\widehat c_j(t)\frac{df_j}{f_j}.
\]
Equivalently,
\[
 \frac{\Omega_t}{P\widehat U}
 =d\widehat H+
 \sum_{j=1}^n\bigl(\lambda_j+\widehat c_j(t)\bigr)
 \frac{df_j}{f_j}.
\]
Every coefficient $a_k,h_k$ has finite pole order supported on $D$.
}
\end{theoremG}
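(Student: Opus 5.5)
We argue by induction on the order in $t$, using Theorem~C at each step. The aim is to build the factors $1+t^ka_k$, the functions $h_k$ and the constants $c_j^k$ one order at a time. Write $\eta_t=\Omega_t/P=\omega_0+\sum_{k\ge1}t^k\eta_k$ with $\eta_k=\Omega_k/P$, so $P\eta_k$ is holomorphic. Integrability $\Omega_t\wedge d\Omega_t=0$ is equivalent to $\eta_t\wedge d\eta_t=0$. At order $t^1$ it gives $\omega_0\wedge d\eta_1+\eta_1\wedge d\omega_0=0$. Since $d\omega_0=0$, this says $d\eta_1\wedge\omega_0=0$. Theorem~C then yields $a_1,h_1,c_j^1$ with
\[
\eta_1=a_1\omega_0+dh_1+\sum_j c_j^1\frac{df_j}{f_j}.
\]
The auxiliary functions all have finite pole order on $D$ (via Theorem~B's control, or from the proof of Theorem~C). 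Dividing $\eta_t$ by $(1+ta_1)$ and setting $\theta_1=\omega_0+t\bigl(dh_1+\sum_j c_j^1 df_j/f_j\bigr)$ gives
\[
\frac{\eta_t}{1+ta_1}=\theta_1+O(t^2),
\]
and $\theta_1$ is closed.

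Inductive step. Suppose that after dividing by $U_{k-1}=\prod_{i<k}(1+t^ia_i)$ we have
\[
\frac{\eta_t}{U_{k-1}}=\theta_{k-1}+t^k\rho_k+O(t^{k+1}),
\qquad
\theta_{k-1}=\omega_0+d H_{k-1}+\sum_j C_j^{(k-1)}(t)\frac{df_j}{f_j},
\]
with $\theta_{k-1}$ closed. Here $\rho_k$ is meromorphic with poles of finite order on $D$; the orders may grow with $k$, since dividing by units $1+t^ia_i$ with $a_i$ meromorphic introduces extra poles. Multiplying an integrable form by a function keeps it integrable, so $\eta_t/U_{k-1}$ is integrable. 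Expanding $\theta\wedge d\theta=0$ at order $t^k$, and using $d\theta_{k-1}=0$, gives $\omega_0\wedge d\rho_k=0$. Theorem~C now applies to $\rho_k$ with $P$ replaced by a power $P^N$. Then we divide by $(1+t^ka_k)$ and absorb $dh_k$ and $c_j^k$ into $\theta_k$. The cross terms $t^ka_k\cdot(\theta_{k-1}-\omega_0)$ are $O(t^{k+1})$ and are pushed to higher order.

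The main obstacle is that Theorem~C is stated only for $\eta$ with $P\eta$ holomorphic, while $\rho_k$ has higher-order poles along $D$. Two routes are available. The first is to re-run the proof of Theorem~C, namely Theorem~A on each reduced singularity followed by gluing through the resolution. One would check that the Diophantine/formal solution of the cohomological equation in Theorem~A works for forms with poles of order $N$ along $xy$. A monomial $x^py^q\,dx/x$ with $p,q\ge -N+1$ only creates divisors $\lambda p+q$ with $(p,q)\ne(0,0)$, and these are controlled by the integer-index condition. This is exactly why the theorem assumes that condition rather than the positive-cone one. The exceptional monomial $(p,q)=(0,0)$ produces the residue terms. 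Extension and gluing are unaffected, since they use only simple connectivity of leaves and holonomy, which do not care about pole order.

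The second route is to avoid the issue: choose $a_k$ holomorphic-ish, or multiply through by $P^{N}$, and note that $\omega_0$-relative closedness is homogeneous in this sense. I would take the first route. Finally, the formal series $\widehat U$, $\widehat H$ and $\widehat c_j$ are the limits of the stabilizing truncations. Each coefficient is fixed from order $k$ on, so everything lives in $\mathcal M_2[[t]]$ with finite pole order on $D$ for each coefficient.
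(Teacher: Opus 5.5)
Your proposal is correct and follows essentially the same route as the paper. Both argue by induction on the order in $t$: integrability, together with closedness of the already-normalized terms, gives $d\rho_k\wedge\omega_0=0$; a finite-pole version of Theorem~C is then applied, and one divides by $1+t^ka_k$. The finite-pole extension you plan to get by re-running the proof of Theorem~C is exactly the paper's Lemma~\ref{lem:several-finite-pole}; there, after resolution, the small divisors are the integral combinations $p\rho_{C'}-q\rho_C$ rather than $\lambda p+q$. Your second route does not work as stated, since decomposing $P^{N-1}\rho_k$ and dividing back does not give the form $a\omega_0+dh+\cdots$, so setting it aside was correct.
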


The next theorem is a higher-dimensional two-component result obtained
through the finite-pole relative decomposition and the Extension Theorem.

\begin{theoremH}
\phantomsection\label{thm:H}
{\em Let $F,G\in\mathcal O_m$, $m\geq3$, be reduced irreducible germs in general position at the origin.
Let $\lambda\in\C\setminus\mathbb Q$ and assume that there exist constants
$c>0$ and $\delta\geq0$ such that
\[
 |\lambda n+m|\geq\frac{c}{(|n|+|m|)^\delta},
 \qquad (n,m)\in\mathbb Z^2\setminus\{(0,0)\}.
\]
Put
\[
 \omega_0=\frac{dG}{G}-\lambda\frac{dF}{F}.
\]
Given an analytic deformation of
\[
 \Omega_0=F\,dG-\lambda G\,dF
\]
by integrable $1$-forms
\[
 \Omega_t=\Omega_0+\sum_{k=1}^{+\infty}t^k\Omega_k,
\]
there are:
\begin{itemize}
 \item a formal unit
 \[
 \widehat U:=\prod_{k=1}^{+\infty}(1+t^k a_k)
 \in\mathcal M_m[[t]],
  \qquad a_k\in\Mm_m;
 \]
 \item a formal function
 \[
 \widehat H:=\sum_{k=1}^{+\infty}t^k h_k
 \in\mathcal M_m[[t]],
  \qquad h_k\in\Mm_m;
 \]
 \item formal functions $\widehat\alpha(t),\widehat\beta(t)
 \in\widehat\Om_1$ satisfying
 \[
  \widehat\alpha(0)=-\lambda,\qquad
  \widehat\beta(0)=1.
 \]
\end{itemize}
such that
\[
 \Omega_t=\widehat U\left[
 \widehat\alpha(t)G\,dF+\widehat\beta(t)F\,dG
 +FG\,d\widehat H
 \right],
\]
where
\[
 d\widehat H:=\sum_{k=1}^{+\infty}t^k dh_k.
\]
}
\end{theoremH}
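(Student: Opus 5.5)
We would prove Theorem~H by constructing the coefficients order by order in $t$, reducing each step to the two-component relative-cohomology decomposition. Put $P=FG$ and $\eta_t=\Omega_t/P$. By Definition~\ref{def:two-component-general-position} we may choose coordinates with $F=x$, $G=y$. Then $\omega_0=dy/y-\lambda dx/x$, and $\zeta_0(x,y)=(x,y,0,\ldots,0)$ is in general position with respect to $(\mathcal F_{\omega_0},D)$. Write the sought identity as
\[
\frac{\Omega_t}{P\widehat U}=\widehat\beta(t)\frac{dG}{G}+\widehat\alpha(t)\frac{dF}{F}+d\widehat H .
\]
Assume inductively that $a_1,\ldots,a_{k-1}$, $h_1,\ldots,h_{k-1}$ and the coefficients of $\widehat\alpha,\widehat\beta$ up to order $k-1$ have been found, with finite pole order along $D$. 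Let $U_{k-1}=\prod_{j<k}(1+t^ja_j)$. Then
\[
\frac{\Omega_t}{PU_{k-1}}-\Bigl(\beta_{<k}\frac{dG}{G}+\alpha_{<k}\frac{dF}{F}+\sum_{j<k}t^jdh_j\Bigr)=t^k\eta_k+O(t^{k+1}),
\]
where $\eta_k$ is a meromorphic $1$-form with poles on $D$ of finite order.

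The key step is to show that $d\eta_k\wedge\omega_0=0$. The integrability $\Omega_t\wedge d\Omega_t=0$ is preserved after division by the scalar $PU_{k-1}$. The truncated model form $\theta_{<k}=\beta_{<k}\,dG/G+\alpha_{<k}\,dF/F+\sum_{j<k}t^j dh_j$ is closed. Expanding $(\theta_{<k}+t^k\eta_k+\cdots)\wedge d(\theta_{<k}+t^k\eta_k+\cdots)=0$ at order $t^k$ therefore gives $\omega_0\wedge d\eta_k=0$, since the lowest term of $\theta_{<k}$ is $\omega_0$. This is the standard first-order integrability computation.

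Next we solve $\eta_k=-a_k\omega_0+dh_k+\alpha_k\,dF/F+\beta_k\,dG/G$. First restrict to the section: $\zeta_0^*\eta_k$ is a meromorphic form on $(\C^2,0)$ with poles on $xy=0$ of finite order, and it is relatively closed for $dy/y-\lambda dx/x$. We then need a two-dimensional decomposition $\zeta_0^*\eta_k=a_0\zeta_0^*\omega_0+dh_0+\alpha\,dx/x+\beta\,dy/y$ with finite-pole $a_0,h_0$. Theorem~A gives this only when $xy\eta$ is holomorphic, whereas here the pole order may grow with $k$. We would therefore use the finite-pole version, which follows from Theorem~C with $n=2$. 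The simple-connectivity hypothesis holds in general position, and there are no nodal separators because $\lambda\notin\mathbb Q$ gives a single reduced non-resonant singularity. If Theorem~C is only available with simple poles, we would first strip the higher-order polar parts. We would write $\eta_k=\sum x^{-i}y^{-j}(\cdots)$ and solve the resulting cohomological equations coefficient-wise. These take the form $(\lambda i+j)h_{ij}=\cdots$ with $(i,j)\in\mathbb Z^2$, which is exactly where the integer-index diophantine condition (rather than the positive-cone one) is needed to control convergence. This finite-pole reduction is the main obstacle.

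With the section decomposition in hand, Theorem~B extends $a_0,h_0$ to $a,h\in\Mm_m$ with finite poles on $D$ such that $\eta_k=a\omega_0+dh+\alpha\,dF/F+\beta\,dG/G$. We then set $a_k=-a$, $h_k=h$, $\alpha_k=\alpha$ and $\beta_k=\beta$. Since $(1+t^ka_k)^{-1}\omega_0=\omega_0-t^ka_k\omega_0+O(t^{k+1})$, this choice cancels the order-$t^k$ error. Induction then yields the formal objects $\widehat U$, $\widehat H$, $\widehat\alpha=-\lambda+\cdots$ and $\widehat\beta=1+\cdots$. Multiplying the identity by $P\widehat U$ gives the stated form $\Omega_t=\widehat U[\widehat\alpha G\,dF+\widehat\beta F\,dG+FG\,d\widehat H]$.
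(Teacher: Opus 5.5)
Your proposal is correct in substance and follows the paper's route: order-by-order division by $1+t^ka_k$, integrability together with closedness of the already normalized terms to get $d\eta_k\wedge\omega_0=0$, and the finite-pole relative decomposition obtained by Laurent division on the section $S$ (divisors $i\lambda+j$ with $(i,j)\in\mathbb Z^2$, controlled by the integer-index Diophantine bound) followed by Theorem~B. Two small corrections are needed. First, the $t^k$-coefficient after dividing by $1+t^ka_k$ is $\eta_k-a_k\omega_0$, so you need $a_k=a$, not $a_k=-a$. Second, drop the appeal to Theorem~C: it is stated only for $(\prod f_j)\eta$ holomorphic, and its no-nodal hypothesis fails for $\lambda\in\R_{>0}\setminus\mathbb Q$. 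The direct coefficientwise computation you give as a fallback is exactly the paper's finite-pole decomposition lemma and needs neither hypothesis.
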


\section{The relative cohomology equation for logarithmic simple singularities}

We consider in complex dimension two:
\begin{itemize}
 \item a linear logarithmic $1$-form
 \(
  \omega_0=\frac{dy}{y}-\lambda\frac{dx}{x},\qquad \lambda\neq0;
 \)
 \item a meromorphic $1$-form $\eta(x,y)$ such that $xy\eta(x,y)$ is
 holomorphic;
 \item the equation
 \(
  \eta\wedge\omega_0=dh\wedge\omega_0,
 \)
 where $h$ is a meromorphic function $h=h(x,y)$.
\end{itemize}
We will discuss under which conditions the equation
\[
 \eta\wedge\omega_0=dh\wedge\omega_0
\]
has a solution $h(x,y)$. 

\subsection{Examples}
We motivate our results with some examples.
\begin{example}{\rm 
We investigate the equation $\eta\wedge\omega_0=dh\wedge\omega_0$ for
\[
 \omega_0=\frac{dy}{y}-\lambda\frac{dx}{x},\qquad
 \eta=\frac{dy}{y}-\mu\frac{dx}{x}.
\]
We have
\[
 \eta\wedge\omega_0
 =\left(\frac{dy}{y}-\mu\frac{dx}{x}\right)
  \wedge\left(\frac{dy}{y}-\lambda\frac{dx}{x}\right)
 =(\lambda-\mu)\frac{dx\wedge dy}{xy}.
\]
Write $h=\sum h_{nm}x^ny^m$. Then
\begin{align*}
 dh\wedge\omega_0
 &=\sum h_{nm}\left(nx^{n-1}y^m\,dx+mx^ny^{m-1}\,dy\right)
 \wedge\left(\frac{dy}{y}-\lambda\frac{dx}{x}\right)\\
 &=\sum h_{nm}x^ny^m(n+m\lambda)\frac{dx\wedge dy}{xy}.
\end{align*}
Therefore $\eta\wedge\omega_0=dh\wedge\omega_0$ is equivalent to
\[
 \frac{\lambda-\mu}{xy}
 =\sum h_{nm}x^ny^m\frac{n+m\lambda}{xy},
\]
hence $\lambda-\mu=h_{00}(0+0\lambda)$, which is equivalent to
$\lambda=\mu$.}
\end{example}

\begin{example}
{\rm 

Assume $\lambda\neq1$ and let $\eta=a\,dy/x+b\,dx/y$. Then
\[
 \eta\wedge\omega_0
 =\left(\frac{\lambda a}{x^2}+\frac{b}{y^2}\right)dx\wedge dy.
\]
Thus $\eta\wedge\omega_0=dh\wedge\omega_0$ if and only if
\[
 \frac{\lambda a}{x^2}+\frac{b}{y^2}
 =\sum(n+m\lambda)h_{nm}x^{n-1}y^{m-1},
\]
which is equivalent to
\[
 \lambda a=(-1+\lambda)h_{-1,1},\qquad
 b=(1-\lambda)h_{1,-1},
\]
and $h_{nm}=0$ for every $(n,m)\notin\{(-1,1),(1,-1)\}$. Equivalently,
\[
 h=\frac{\lambda a}{-1+\lambda}x^{-1}y
 +(1-\lambda)^{-1}bxy^{-1}.
\]
}
\end{example}

\begin{example}
{\rm 
Let $\eta=y\,dx/x$. Then
\[
 \eta\wedge\omega_0=\frac{dx\wedge dy}{x},
 \qquad
 dh\wedge\omega_0
 =\sum h_{nm}\frac{x^ny^m}{xy}(n+m\lambda)\,dx\wedge dy.
\]
Therefore $\eta\wedge\omega_0=dh\wedge\omega_0$ if and only if
\[
 \sum(n+m\lambda)h_{nm}x^ny^{m-1}=1.
\]
Thus a solution is
\[
 h=\frac{y}{\lambda}.
\]

Similarly, $\eta=x\,dy/y$ admits the solution
\[
 h=\lambda x.
\]
}
\end{example}

\begin{example}
{\rm 
Let $\eta=dx/(xy)$. Then
\[
 \eta\wedge\omega_0=\frac{dx\wedge dy}{xy^2}.
\]
Thus $\eta\wedge\omega_0=dh\wedge\omega_0$ if and only if
\[
 \sum h_{nm}x^ny^{m+1}(n+m\lambda)=1.
\]
A solution is therefore
\[
 h=-\frac{1}{\lambda y}.
\]
Similarly, $\eta=dy/(xy)$ admits the solution
\[
 h=-\frac{\lambda}{x}.
\]
}
\end{example}

\begin{example}
{\rm 
Assume $\lambda\notin\mathbb Q$ and let
$\eta=x^{m_0}\,dx/y$, $m_0\geq1$. Then
\[
 \eta\wedge\omega_0=\frac{x^{m_0}}{y^2}\,dx\wedge dy.
\]
The equation $\eta\wedge\omega_0=dh\wedge\omega_0$ is equivalent to
\[
 \sum h_{nm}(n+m\lambda)x^ny^{m+1}=x^{m_0+1}.
\]
A solution is
\[
 h=\frac{x^{m_0+1}}{(m_0+1-\lambda)y}.
\]
Similarly, $\eta=y^{m_0}\,dy/x$, $m_0\geq1$, admits the solution
\[
 h=\frac{\lambda y^{m_0+1}}
 {(\lambda(m_0+1)-1)x}.
\]

So, for $1$-forms $\eta$ like
\begin{align*}
 \eta={}&a_{-1,0}\frac{dx}{x}+a_{-1,-1}\frac{dx}{xy}
 +a_{0,-1}\frac{dx}{y}+a_{m,-1}\frac{x^m\,dx}{y}\\
 &+b_{-1,0}\frac{dy}{x}+b_{-1,-1}\frac{dy}{xy}
 +b_{0,-1}\frac{dy}{y}+b_{-1,m}\frac{y^m\,dy}{x},
\end{align*}
the obstruction to the equation $\eta\wedge\omega=dh\wedge\omega$ having a
meromorphic solution is given by the logarithmic part
\[
 \eta_0=a_{-1,0}\frac{dx}{x}+b_{0,-1}\frac{dy}{y},
\]
which must satisfy $\eta_0\wedge\omega=0$, i.e.,
\[
 a_{-1,0}+\lambda b_{0,-1}=0.
\]
In other words, $\eta_0=t\omega$ for some $t\in\C$.
}
\end{example}
\begin{example}
{\rm 
Let
\[
 \eta=\frac{x^2\,dy-y\,dx}{xy}=x\frac{dy}{y}-\frac{dx}{x}.
\]
Then
\[
 \eta\wedge\omega_0=(\lambda x-1)\frac{dx\wedge dy}{xy}.
\]
Hence $\eta\wedge\omega_0=dh\wedge\omega_0$ would require
\[
 \lambda x-1=\sum h_{nm}x^ny^m(n+m\lambda),
\]
and therefore
\[
 h_{nm}=0,\quad \forall(n,m)\notin\{(0,0),(1,0)\},
 \qquad h_{1,0}=\lambda,
 \qquad 0h_{0,0}=-1,
\]
which is not possible. The obstruction comes from the fact that
$\eta=x\,dy/y-dx/x$ has a logarithmic part $dx/x$ which is not a multiple of
$\omega=dy/y-\lambda\,dx/x$.
}
\end{example}

\subsection{Diophantine condition}

We recall a classical definition for holomorphic diffeomorphisms and vector
fields, especially in connection with linearization problems. The one for vector fields
goes as follows. Given complex numbers $\lambda_1,\ldots,\lambda_n\in\C$, we
say that they satisfy the \emph{positive-index diophantine condition} if
there are constants $C>0$, $\delta>0$ such that
\begin{align*}
 \left|m_1\lambda_1+\cdots+m_n\lambda_n-\lambda_j\right|
 &\geq \frac{C}{(m_1+\cdots+m_n)^\delta},\\
 &\qquad \forall j\in\{1,\ldots,n\},\quad
 \forall(m_1,\ldots,m_n)\in\N^n,\\
 &\hspace{45mm}m_1+\cdots+m_n\geq2.
\end{align*}
For the pair $(\lambda,1)$ and nonnegative indices we shall use the
\emph{positive-cone diophantine condition}
\[
 |i\lambda+j|\geq\frac{C}{(i+j)^\delta},
 \qquad \forall(i,j)\in\N^2\setminus\{(0,0)\}.
\]
This condition is automatically verified if $\lambda\in\C\setminus\R$. We
use this version when only nonnegative indices occur.

When Laurent expansions or homogeneous rational functions introduce negative
indices, we use the stronger \emph{integer-index diophantine condition}
\[
 |i_1\lambda_1+\cdots+i_n\lambda_n|
 >\frac{C}{(|i_1|+\cdots+|i_n|)^\delta},
 \qquad \forall(i_1,\ldots,i_n)\in\mathbb Z^n\setminus\{0\},
\]
and
\[
 |\lambda i+j|>\frac{C}{(|i|+|j|)^\delta},
 \qquad \forall(i,j)\in\mathbb Z^2\setminus\{(0,0)\},
\]
Irrationality excludes resonances, but does not
by itself imply this integer-index estimate.

We recall that  $\lambda_1,\ldots,\lambda_n$ are {\it $\mathbb Z$-linearly
independent} if 
\[
 i_1\lambda_1+\cdots+i_n\lambda_n=0
 \quad\text{for some }(i_1,\ldots,i_n)\in\mathbb Z^n
\]
if and only if $i_1=\cdots=i_n=0$.

\subsection{Solving the relative cohomology equation}
Let us now solve the relative cohomology equation. 
We write
\[
 \eta=\sum_{n,m\geq-1}a_{nm}x^ny^m\,dx
 +\sum_{n,m\geq-1}b_{nm}x^ny^m\,dy,
 \qquad
 \omega_0=\frac{dy}{y}-\lambda\frac{dx}{x},
 \qquad
 h=\sum h_{nm}x^ny^m.
\]
We have
\[
 \eta\wedge\omega_0
 =\sum_{n,m\geq-1}a_{nm}x^ny^{m-1}\,dx\wedge dy
 +\lambda\sum_{n,m\geq-1}b_{nm}x^{n-1}y^m\,dx\wedge dy.
\]
Separating the boundary terms, this becomes
\begin{align*}
 \eta\wedge\omega_0
 =\Bigg[{}&a_{-1,-1}x^{-1}y^{-2}
 +\sum_{n\geq0}a_{n,-1}x^ny^{-2}
 +a_{-1,0}x^{-1}y^{-1}\\
 &+\sum_{n\geq0}a_{n,0}x^ny^{-1}
 +\sum_{\substack{n\geq0\\m\geq1}}a_{nm}x^ny^{m-1}\\
 &+\lambda\Bigg(
 b_{-1,-1}x^{-2}y^{-1}
 +\sum_{m\geq0}b_{-1,m}x^{-2}y^m
 +\sum_{m\geq0}b_{0,m}x^{-1}y^m\\
 &\hspace{30mm}+b_{0,-1}x^{-1}y^{-1}
 +\sum_{\substack{n\geq1\\m\geq0}}b_{nm}x^{n-1}y^m
 \Bigg)\Bigg]dx\wedge dy.
\end{align*}
Rewrite
\[
 \sum_{\substack{n\geq0\\m\geq1}}a_{nm}x^ny^{m-1}
 +\lambda\sum_{\substack{n\geq1\\m\geq0}}b_{nm}x^{n-1}y^m
 =\sum_{n,m\geq0}(a_{n,m+1}+\lambda b_{n+1,m})x^ny^m.
\]
Then
\begin{align*}
 \eta\wedge\omega_0=\Bigg[{}&
 a_{-1,-1}x^{-1}y^{-2}
 +\lambda b_{-1,-1}x^{-2}y^{-1}
 +(a_{-1,0}+\lambda b_{0,-1})x^{-1}y^{-1}\\
 &+\left(\sum_{n\geq0}a_{n,-1}x^n\right)y^{-2}
 +\lambda\left(\sum_{m\geq0}b_{-1,m}y^m\right)x^{-2}\\
 &+\left(\sum_{n\geq0}a_{n,0}x^n\right)y^{-1}
 +\lambda\left(\sum_{m\geq0}b_{0,m}y^m\right)x^{-1}\\
 &+\sum_{n,m\geq0}(a_{n,m+1}+\lambda b_{n+1,m})x^ny^m
 \Bigg]dx\wedge dy.
\end{align*}

On the other hand,
\begin{align*}
 dh\wedge\omega_0
 &=\left(\sum_{n,m}nh_{nm}x^{n-1}y^m\,dx
 +\sum_{n,m}mh_{nm}x^ny^{m-1}\,dy\right)
 \wedge\left(\frac{dy}{y}-\lambda\frac{dx}{x}\right)\\
 &=\sum_{n,m}(n+m\lambda)h_{nm}x^{n-1}y^{m-1}\,dx\wedge dy.
\end{align*}
Separating the boundary terms gives
\begin{align*}
 dh\wedge\omega_0=\Bigg[{}&
 (-1-\lambda)h_{-1,-1}x^{-2}y^{-2}
 +\sum_{m\geq0}(-1+m\lambda)h_{-1,m}x^{-2}y^{m-1}\\
 &+\sum_{n\geq0}(n-\lambda)h_{n,-1}x^{n-1}y^{-2}
 +\sum_{m\geq1}\lambda m h_{0,m}x^{-1}y^{m-1}\\
 &+\sum_{n\geq1}n h_{n,0}x^{n-1}y^{-1}
 +\sum_{\substack{n\geq1\\m\geq1}}(n+\lambda m)h_{nm}x^{n-1}y^{m-1}
 \Bigg]dx\wedge dy.
\end{align*}

Thus $\eta\wedge\omega_0=dh\wedge\omega_0$ is equivalent to the following
relations:
\begin{align*}
 (-1-\lambda)h_{-1,-1}x^{-2}y^{-2}&=0,\\
 a_{-1,-1}x^{-1}y^{-2}&=-\lambda h_{0,-1}x^{-1}y^{-2},\\
 \lambda b_{-1,-1}x^{-2}y^{-1}&=-h_{-1,0}x^{-2}y^{-1},\\
 \sum_{n\geq0}a_{n,-1}x^ny^{-2}
 &=\sum_{n\geq1}(n-\lambda)h_{n,-1}x^{n-1}y^{-2},\\
 \lambda\sum_{m\geq0}b_{-1,m}x^{-2}y^m
 &=\sum_{m\geq1}(-1+m\lambda)h_{-1,m}x^{-2}y^{m-1},\\
 a_{-1,0}x^{-1}y^{-1}&=0\cdot h_{0,0}x^{-1}y^{-1},\\
 \sum_{n\geq0}a_{n,0}x^ny^{-1}
 &=\sum_{n\geq1}nh_{n,0}x^{n-1}y^{-1},\\
 b_{0,-1}x^{-1}y^{-1}&=\lambda\,0\,h_{0,0}x^{-1}y^{-1},\\
 \lambda\sum_{m\geq0}b_{0,m}x^{-1}y^m
 &=\sum_{m\geq1}\lambda m h_{0,m}x^{-1}y^{m-1},\\
 \sum_{n,m\geq0}(a_{n,m+1}+\lambda b_{n+1,m})x^ny^m
 &=\sum_{n,m\geq1}(n+\lambda m)h_{n,m}x^{n-1}y^{m-1}.
\end{align*}
Therefore,
\begin{align*}
 (-1-\lambda)h_{-1,-1}&=0,\\
 a_{-1,-1}&=-\lambda h_{0,-1},\\
 \lambda b_{-1,-1}&=-h_{-1,0},\qquad a_{-1,0}=0,\\
 a_{m,-1}&=(m+1-\lambda)h_{m+1,-1},\qquad \forall m\geq0,
 \qquad b_{0,-1}=0,\\
 \lambda b_{-1,m}&=(-1+(m+1)\lambda)h_{-1,m+1},\qquad \forall m\geq0,\\
 a_{m,0}&=(m+1)h_{m+1,0},\qquad \forall m\geq0,\\
 \lambda b_{0,m}&=\lambda(m+1)h_{0,m+1},\qquad \forall m\geq0,\\
 a_{n,m+1}+\lambda b_{n+1,m}
 &=(n+1+\lambda(m+1))h_{n+1,m+1},\qquad \forall n,m\geq0.
\end{align*}
From now on we assume that $\lambda\notin\mathbb Q$. The solution is given
by, provided that $a_{-1,0}=b_{0,-1}=0$,
\begin{align*}
 h_{-1,-1}&=0,\\
 h_{0,-1}&=-\frac{a_{-1,-1}}{\lambda},\\
 h_{-1,0}&=-\lambda b_{-1,-1},\\
 h_{m+1,-1}&=\frac{a_{m,-1}}{m+1-\lambda},\qquad \forall m\geq0,\\
 h_{-1,m+1}&=\frac{\lambda b_{-1,m}}{-1+(m+1)\lambda},\qquad \forall m\geq0,\\
 h_{m+1,0}&=\frac{a_{m,0}}{m+1},\qquad \forall m\geq0,\\
 h_{0,m+1}&=\frac{\lambda b_{0,m}}{\lambda(m+1)},\qquad \forall m\geq0,\\
 h_{n+1,m+1}&=
 \frac{a_{n,m+1}+\lambda b_{n+1,m}}{n+1+\lambda(m+1)},\qquad \forall n,m\geq0.
\end{align*}

\begin{proof}[Proof of Theorem A]
The coefficient formulas above define a formal meromorphic solution $h$
with at most simple poles along $(xy=0)$.
Now we prove the convergence of the power series
\[
 h=\sum_{n,m=-1}^{+\infty}h_{nm}x^ny^m
\]
as defined above. For this we observe that:
\begin{enumerate}[label=\textup{(\arabic*)}]
 \item Since $\lambda\notin\mathbb Q$, there is $\varepsilon>0$ such that
 $|\lambda-m|\geq\varepsilon$ for all $m\in\mathbb N$. Therefore
 \[
  \left|\frac{a_{m,-1}}{m+1-\lambda}\right|
  \leq |a_{m,-1}|\frac1\varepsilon,\qquad \forall m\geq0.
 \]
 \item Since
 \[
  \frac1{|\frac{-1}{\lambda}+m+1|}\longrightarrow0
  \quad\text{as }m\longrightarrow\infty,
 \]
 we conclude that, for some $C>0$,
 \[
  \left|\frac{\lambda b_{-1,m}}{-1+(m+1)\lambda}\right|
  \leq C|b_{-1,m}|,\qquad \forall m\geq0.
 \]
 \item
 \[
  \left|\frac{a_{m,0}}{m+1}\right|\leq|a_{m,0}|,\qquad \forall m\geq0.
 \]
 \item
 \[
  \left|\frac{b_{0,m}}{m+1}\right|\leq|b_{0,m}|,\qquad \forall m\geq0.
 \]
 \item Finally,
 \begin{align*}
 \left|
 \frac{a_{n,m+1}+\lambda b_{n+1,m}}
 {n+1+\lambda(m+1)}
 \right|
 &\leq
 \frac{|a_{n,m+1}|}{|n+1+\lambda(m+1)|}\\
 &\quad+
 \frac{|\lambda|\,|b_{n+1,m}|}{|n+1+\lambda(m+1)|}.
 \end{align*}
 From the diophantine condition we have
 \[
  |\lambda(m+1)+n+1|\geq
  \frac{C}{(m+n+2)^\delta},
  \qquad \forall n,m\in\mathbb N,
 \]
 and therefore
 \begin{align*}
 \left|
 \frac{a_{n,m+1}+\lambda b_{n+1,m}}
 {(n+1)+\lambda(m+1)}
 \right|
 &\leq |a_{n,m+1}|\frac{(n+m+2)^\delta}{C}\\
 &\quad+|\lambda|\,|b_{n+1,m}|\frac{(n+m+2)^\delta}{C}.
 \end{align*}
\end{enumerate}
Using \textup{(1)}--\textup{(5)}, the Cauchy--Hadamard criterion, and the
convergence of the series
\[
 \sum a_{nm}x^ny^m,\qquad \sum b_{nm}x^ny^m,
\]
we conclude that $h$ is convergent.

For completeness, put
\[
 \theta=\eta-a_{-1,0}\frac{dx}{x}-b_{0,-1}\frac{dy}{y}.
\]
The identity $(\theta-dh)\wedge\omega_0=0$ gives a meromorphic function
$a$ such that $\theta-dh=a\omega_0$. If
$\theta-dh=A\,dx+B\,dy$, then
\[
 a=yB=-\frac{xA}{\lambda}.
\]
The coefficient construction above gives $h_{-1,-1}=0$ and
$h=\sum_{i,j\geq-1}h_{ij}x^iy^j$. Hence $y\,\partial h/\partial y$ has no
exponent smaller than $-1$ in either variable. Since $xy\theta$ is
holomorphic, the equality $a=yB$ shows that $xya$ is holomorphic. This
proves the additional pole-order assertion in Theorem~A.
\end{proof}

\section{The Extension Theorem}

In this section we show how to pass from dimension two to dimension $m\geq3$
in our study of relative cohomology, using the general-position terminology
of Definition~\ref{def:section-general-position}.

\begin{lemma}[Flow-box extension]
Let $F:(U,0)\to(\C,0)$ be a holomorphic submersion and let
$\omega=g\,dF$, where $g\in\mathcal O(U)^\times$. Let $S\subset U$ be a
two-dimensional submanifold such that $F|_S$ is a submersion. Suppose that
$\theta$ is either holomorphic, or meromorphic with a pole of finite order
supported on $(F=0)$, and satisfies
\[
 d\theta\wedge dF=0
\]
and that
\[
 \theta|_S=a_0\omega|_S+dh_0
\]
where $a_0,h_0$ are holomorphic in the first case and meromorphic with
finite pole order along $(F|_S=0)$ in the second. Then, after shrinking
$U$, there are functions $a,h$ of the same respective type such that
\[
 \theta=a\omega+dh,\qquad a|_S=a_0,\qquad h|_S=h_0.
\]
\end{lemma}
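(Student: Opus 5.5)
Since $F$ is a submersion and $F|_S$ is a submersion, I will first choose coordinates adapted to both. Take $x_1=F$. Because $F|_S$ is a submersion, we can pick a second function $x_2$ such that $(x_1,x_2)|_S$ are coordinates on $S$. Completing to $(x_1,x_2,x_3,\ldots,x_m)$, and after a holomorphic change straightening $S$, we may assume
\[
S=\{x_3=\cdots=x_m=0\},\qquad F=x_1 .
\]
Here we use that $S$ is a graph over the $(x_1,x_2)$-plane. Since $g$ is a unit, it suffices to prove the statement with $\omega$ replaced by $dx_1$: replace $a$ by $a/g$ and $a_0$ by $a_0/(g|_S)$, which preserves the holomorphic/finite-pole type.

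Next I will use the hypothesis $d\theta\wedge dx_1=0$. Write $\theta=\sum_j\theta_j\,dx_j$. The condition says that $d(\theta-\theta_1dx_1)\wedge dx_1=0$. In other words, for each fixed $x_1$ the form $\theta'=\sum_{j\ge2}\theta_j\,dx_j$ is closed in the variables $(x_2,\ldots,x_m)$, i.e.\ $\partial_{x_k}\theta_j=\partial_{x_j}\theta_k$ for $j,k\ge2$. On a polydisc (minus $x_1=0$ in the meromorphic case, but $x_1$ is only a parameter there) the relative Poincaré lemma gives a primitive
\[
h(x)=h_0(x_1,x_2)+\int_{\gamma}\theta',
\]
where $\gamma$ is the straight path from $(x_1,x_2,0,\ldots,0)$ to $(x_1,x_2,x_3,\ldots,x_m)$ inside the fibre $\{x_1=\text{const}\}$. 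Concretely,
\[
h(x)=h_0(x_1,x_2)+\int_0^1\sum_{j\ge3}x_j\,\theta_j(x_1,x_2,tx_3,\ldots,tx_m)\,dt .
\]
We must check that $\partial_{x_2}h=\theta_2$. This follows from the closedness relations together with the restriction hypothesis, which gives $\theta_2|_S=\partial_{x_2}h_0$. Indeed, both sides have the same $x_j$-derivatives for $j\ge3$ (by closedness) and agree on $S$.

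The pole control is where care is needed, and I expect it to be the only real obstacle. In the meromorphic case $x_1^N\theta_j$ is holomorphic. Since $x_1$ is not an integration variable, the integral of $x_1^N\theta_j$ is holomorphic, so $x_1^Nh$ is holomorphic provided $N$ also bounds the pole order of $h_0$. Thus $h$ has finite pole order on $(F=0)$. Moreover $h|_S=h_0$, because the integral vanishes when $x_3=\cdots=x_m=0$.

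Finally, $\theta-dh$ has vanishing $dx_j$-components for $j\ge2$, so $\theta-dh=a\,dx_1$ with $a=\theta_1-\partial_{x_1}h$. This $a$ has the same type as $\theta$ and $h$: differentiating in $x_1$ raises the pole order by at most one, which is still finite. On $S$ we get $a\,dx_1|_S=\theta|_S-dh_0=a_0\,dx_1|_S$. Since $dx_1|_S\neq0$ (because $F|_S$ is a submersion), it follows that $a|_S=a_0$. Undoing the normalization by $g$ finishes the argument.
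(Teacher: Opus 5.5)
Your proof is correct and is essentially the paper's argument. You choose adapted coordinates with $F=x_1$ and $S=\{x_3=\cdots=x_m=0\}$, integrate the plaque-wise closed form from $S$ with initial value $h_0$, keep the pole order finite because $x_1$ is only a parameter, and read off $a$ from $\theta-dh$, normalizing by restriction to $S$. Your explicit check that $\partial_{x_2}h=\theta_2$ via $\theta_2|_S=\partial_{x_2}h_0$, your $x_1^N$-multiplication in place of the paper's Laurent-expansion remark, and your explicit treatment of the unit $g$ simply spell out steps the paper leaves implicit.
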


\begin{proof}
Choose coordinates $(u,z_2,\ldots,z_m)$ with $u=F$ and
$S=(z_3=\cdots=z_m=0)$. Write
\[
 \theta=A\,du+\sum_{j=2}^m B_j\,dz_j.
\]
The condition $d\theta\wedge du=0$ says that
$\sum_{j=2}^mB_jdz_j$ is closed on every plaque $(u=\text{constant})$.
For $z=(z_2,\ldots,z_m)$, define
\[
 h(u,z):=h_0(u,z_2)+
 \int_{(z_2,0,\ldots,0)}^{(z_2,z_3,\ldots,z_m)}
 \sum_{j=2}^mB_j(u,\zeta)\,d\zeta_j.
\]
On a sufficiently small polydisc the integral is path-independent. It is
holomorphic in the holomorphic case. In the meromorphic case, Laurent
expansion in $u$ shows that integration in the $z$ variables preserves the
finite lower bound on the powers of $u$; hence $h$ is meromorphic with
finite pole order along $(u=0)$. Then $\theta-dh$ vanishes on every plaque,
hence equals $a\omega$, with $a$ of the same type. Restriction to $S$ gives
the required normalization.
\end{proof}

\begin{proof}[Proof of Theorem B]
Put
\[
 P=\prod_{j=1}^n f_j,\qquad
 \Omega=P\omega,\qquad
 \theta=\eta-\sum_{j=1}^n c_j\frac{df_j}{f_j}.
\]
Thus $\Omega$ is a holomorphic integrable $1$-form defining
$\mathcal F_\omega$, the polar divisor of $\theta$ is supported on $D$ and
has finite order, and
\[
 d\theta\wedge\Omega=0,\qquad
 \zeta_0^*\theta
 =\frac{a_0}{P\circ\zeta_0}\,\zeta_0^*\Omega+dh_0.
\]

After a holomorphic change of coordinates, suppose that
$\zeta_0(x)=(x,0)$ in $\C^2\times\C^{m-2}$. If $m=2$, the embedding is a
local biholomorphism, and transporting $a_0,h_0$ by its inverse gives the
conclusion directly. Assume henceforth that $m\geq3$ and choose concentric
bidiscs
\[
 \Delta_r^2\Subset\Delta_{R_1}^2\Subset\Delta_{R_2}^2
\]
and let
\[
 K=\overline{\Delta_{R_1}^2\setminus\Delta_r^2}\times\{0\}.
\]
General position implies that $K$ is disjoint from
$\operatorname{Sing}\mathcal F_\omega$. It also implies that every point
of $K\cap D$ is a smooth point of exactly one component of $D$: a singular
point of a component, or an intersection of two components, is a zero of
$\Omega$. Since these exceptional sets are closed and $K$ is compact,
after decreasing the radii there is a transverse polydisc
$\Delta_\rho^{m-2}$ on which the foliation remains regular and $D$ has at
most one smooth component near every point of the product crown.

Fix $p\in K$. If $p\notin D$, apply the holomorphic flow-box extension
lemma to $\Omega$. If $p\in D$, use a flow box in which its unique local
component is $(u=0)$. Since that component is invariant, $u$ may be chosen
as a local first integral, and the meromorphic flow-box extension lemma
applies. In either case we obtain, on a product neighborhood $W_p$ of $p$,
meromorphic functions $A_p,H_p$, of finite pole order supported on $D$,
such that
\[
 \theta=A_p\Omega+dH_p,\qquad
 A_p|_S=\frac{a_0}{P|_S},\qquad H_p|_S=h_0.
\]

These normalized extensions are unique. Indeed, on an overlap write
$\Omega=g\,dF$. If two such pairs are defined there, then
\[
 d(H_p-H_q)=-(A_p-A_q)\Omega,
\]
so $H_p-H_q=\varphi(F)$ for a one-variable meromorphic function $\varphi$.
The difference vanishes on $S$, and $F|_S$ is a submersion; hence
$\varphi=0$, first off $D$ and then meromorphically across it. Thus
$H_p=H_q$ and $A_p=A_q$. Choosing the $W_p$ in product form ensures that
every component of a nonempty overlap meets $S$.

A finite subcover of $K$ therefore gives meromorphic functions $A',H'$ on
a product crown
\[
 \left(\Delta_{R_1}^2\setminus\overline{\Delta_r^2}\right)
 \times\Delta_{\rho'}^{m-2}
\]
satisfying $\theta=A'\Omega+dH'$. Compactness of a slightly smaller closed
crown gives a uniform bound for their pole orders. Hence, for some $N$,
\[
 P^N A',\qquad P^N H'
\]
are holomorphic on that product crown.

The parameterized Hartogs theorem, applied in the first two variables,
extends these two holomorphic functions to
$\Delta_{R_1}^2\times\Delta_{\rho'}^{m-2}$. Denote the extensions by
$\widetilde A,\widetilde H$ and set
\[
 A=\frac{\widetilde A}{P^N},\qquad
 H=\frac{\widetilde H}{P^N}.
\]
Then $A,H$ are meromorphic with finite pole order supported on $D$. The
meromorphic identity principle extends $\theta=A\Omega+dH$ from the crown
to the full polydisc. Since $\Omega=P\omega$, setting $a=PA$ and $h=H$
gives
\[
 \eta=a\omega+dh+\sum_{j=1}^n c_j\frac{df_j}{f_j}.
\]
On the punctured section these functions restrict to $a_0,h_0$; the
meromorphic identity principle on $S$ gives
$a\circ\zeta_0=a_0$ and $h\circ\zeta_0=h_0$ at the origin as well.

Under the additional simple-pole assumptions in the statement, the local
flow-box construction gives $PH_p$ and $P^2A_p$ holomorphic. Indeed,
integration in the plaque variables preserves the simple pole of $H_p$,
whereas differentiating $H_p$ may increase the pole order by one in the
coefficient $A_p$ relative to $\Omega=P\omega$. Consequently, the glued
functions $PH'$ and $P^2A'$ are holomorphic on the product crown. Applying
the parameterized Hartogs theorem to these two functions and dividing the
extensions by $P$ and $P^2$, respectively, gives a decomposition with
\[
 Ph\in\mathcal O_m,\qquad Pa=P^2A\in\mathcal O_m.
\]
The same restriction and identity-principle argument preserves the
normalizations $a\circ\zeta_0=a_0$ and $h\circ\zeta_0=h_0$.
\end{proof}

Using Theorems~A and~B we obtain:

\begin{proposition}
Let $F,G\in\Om_m$ be irreducible germs in general position at $0\in\C^m$
in the sense of Definition~\ref{def:two-component-general-position},
$m\geq2$. Let $\lambda\in\C\setminus\mathbb Q$ satisfy the integer-index
diophantine condition
\[
 |\lambda n+m|>\frac{c}{(|n|+|m|)^\delta},
 \qquad \forall(n,m)\in\mathbb Z^2\setminus\{(0,0)\}.
\]
If we put
\[
 \omega_0=\frac{dG}{G}-\lambda\frac{dF}{F}
\]
and $\eta$ is a meromorphic $1$-form germ
\[
 \eta=\frac{\eta_0}{FG}
\]
for some holomorphic $1$-form germ $\eta_0$. Assume, in addition, that
\[
 d\eta\wedge\omega_0=0.
\]
Then there is a germ of
meromorphic function $h\in\Mm_m$, with polar divisor of finite order
supported on $(FG=0)$, such that
\[
 \left(\eta-a\frac{dF}{F}-b\frac{dG}{G}\right)\wedge\omega_0
 =dh\wedge\omega_0
\]
for some constants $a,b\in\C$.
\end{proposition}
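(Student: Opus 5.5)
The plan is to reduce to the two-dimensional section by the coordinates of Definition~\ref{def:two-component-general-position}, apply Theorem~A there, and lift back with Theorem~B. Since $F,G$ are in general position, we choose local coordinates $(x,y,z_3,\ldots,z_m)$ with $F=x$ and $G=y$. In these coordinates
\[
 \omega_0=\frac{dy}{y}-\lambda\frac{dx}{x},\qquad \eta=\frac{\eta_0}{xy},
\]
and $D=(xy=0)$ is reduced with residues $-\lambda,1\in\C^*$. If $m=2$, we apply Theorem~A directly. For this we check its hypotheses. The integer-index condition implies the positive-cone condition. Theorem~A is stated for $\lambda\notin\R$, but its proof uses only $\lambda\notin\mathbb Q$ together with the Diophantine estimate on the divisors $n+1+\lambda(m+1)$. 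Hence it applies verbatim in our situation: the estimates (1)--(5) go through unchanged, with $\lambda$ irrational supplying $|m+1-\lambda|\ge\varepsilon$.

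For $m\ge3$, we take the embedding $\zeta_0(x,y)=(x,y,0,\ldots,0)$. As remarked after Definition~\ref{def:section-general-position}, this is in general position with respect to $(\mathcal F_{\omega_0},D)$. The pullback $\zeta_0^*\eta$ still has the form $\eta'/(xy)$ with $\eta'$ holomorphic. Applying Theorem~A on the section, we get $\alpha,\beta\in\C$ and $a_0,h_0\in\Mm_2$ with $xya_0,\;xyh_0\in\Om_2$ and
\[
 \zeta_0^*\eta=a_0\,\zeta_0^*\omega_0+dh_0+\alpha\frac{dx}{x}+\beta\frac{dy}{y}.
\]
This is exactly the hypothesis of Theorem~B, with $c_1=\alpha$ and $c_2=\beta$, using $F\circ\zeta_0=x$ and $G\circ\zeta_0=y$.

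The remaining hypotheses of Theorem~B are immediate. We have $d\eta\wedge\omega_0=0$ by assumption. The polar set of $\eta$ lies in $D$ with finite (simple) order, because $FG\eta=\eta_0$ is holomorphic. Theorem~B then yields $a,h\in\Mm_m$ with finite pole order along $(FG=0)$, and even $FGh\in\Om_m$, such that
\[
 \eta=a\omega_0+dh+\alpha\frac{dF}{F}+\beta\frac{dG}{G}.
\]
Wedging with $\omega_0$ kills $a\omega_0$ and gives the claimed identity with constants $(a,b)=(\alpha,\beta)$.

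The only delicate point is the hypothesis mismatch in Theorem~A: it asks for $\lambda\notin\R$, whereas here we only have $\lambda\notin\mathbb Q$ plus the integer-index estimate. I would handle this by re-reading the convergence argument of Theorem~A, as described above, rather than invoking it as a black box. Everything else is a direct chaining of Theorems~A and~B.
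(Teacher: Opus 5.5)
Your proposal is correct and follows the paper's route. The paper obtains this proposition directly from Theorems~A and~B: Theorem~A on the section $\zeta_0(x,y)=(x,y,0,\ldots,0)$ gives the decomposition there, and Theorem~B extends it to $(\C^m,0)$. You also check that Theorem~A's convergence argument uses only $\lambda\notin\mathbb Q$ together with the positive-cone estimate, and that the integer-index condition implies that estimate; this correctly closes the $\lambda\notin\R$ hypothesis mismatch that the paper leaves implicit.
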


\section{Relative Cohomology: The homogeneous case}
In this section we prove our main results for homogeneous 1-forms. 
\begin{proof}[Proof of Theorem D]
Put $\eta=\Omega/(PQ)$. At a smooth point of $(P=0)$ outside $(Q=0)$,
invariance gives
\[
 \Omega=A\,dP+P\beta,
\]
so $\eta=(A/Q)dP/P+\beta/Q$. The analogous expression holds along
$(Q=0)$, and transversality gives the same conclusion at their
intersection. Thus $\eta$ is logarithmic along the reduced
normal-crossings divisor $(PQ=0)$.

The residue of $\eta$ along $(P=0)$ is a holomorphic homogeneous function
of degree zero on the punctured cone. It descends to a holomorphic function
on the smooth irreducible projective hypersurface $(P=0)\subset
\mathbb{CP}^{m-1}$ and is therefore a constant $a$. Similarly the residue
along $(Q=0)$ is a constant $b$. Hence
\[
 \theta:=\eta-a\frac{dP}{P}-b\frac{dQ}{Q}
\]
has zero residues and extends holomorphically across $(PQ=0)$ on
$\C^m\setminus\{0\}$. Since $m\geq3$, Hartogs extension gives a holomorphic
$1$-form at the origin. Its coefficients are homogeneous of degree $-1$,
so they vanish. Therefore $\theta=0$.
\end{proof}

\begin{proof}[Proof of Theorem F]
Put $D=(F_1\cdots F_n=0)$. Since $D$ is reduced and invariant by $\eta$,
each irreducible component $(F_j=0)$ is invariant. Outside the analytic
subset where the components are singular or fail to have normal crossings,
the local divisibility argument used in the proof of Theorem~D shows
that $\eta$ is logarithmic along $D$.

Its residue $r_j$ along $(F_j=0)$ is homogeneous of degree zero. It descends
to a holomorphic function on the smooth locus of the irreducible projective
hypersurface
\[
 (F_j=0)\subset\mathbb{CP}^{m-1}.
\]
When $m\geq3$, the omitted subset has codimension at least two in this
hypersurface, so $r_j$ extends and is constant; write $r_j=c_j$. When
$m=2$ and $n=2$, irreducibility makes $F_1,F_2$ linear and the same
conclusion is immediate. Therefore
\[
 \theta=\eta-\sum_{j=1}^n c_j\frac{dF_j}{F_j}
\]
has zero residues and extends holomorphically across $D$ away from the
codimension-three exceptional subset. Hartogs extension removes that subset
and the origin. The coefficients of $\theta$ are homogeneous of degree
$-1$, hence vanish. Thus
\[
 \eta=\sum_{j=1}^n c_j\frac{dF_j}{F_j},
\]
which proves \hyperref[thm:F]{Theorem~F}.
\end{proof}

\section{Topology and dynamics of the leaves: general position, nodal separators  and dicriticalness}

\subsection{Simple-connectivity of open leaves}
We use the notion of very general position introduced in
Definition~\ref{def:very-general-position}. We now present its consequences
for the topology of the open leaves of the associated logarithmic
foliation.

Let
\[
 \omega_0=\sum_{j=1}^n\lambda_j\frac{df_j}{f_j}
\]
be a logarithmic foliation on a complex manifold $M$. By an \emph{open
leaf} of $\mathcal F_{\omega_0}$ we shall mean a leaf $L_0\subset M$ of
the foliation $\mathcal F_{\omega_0}$ defined by $\omega_0$ on
\[
 M\setminus\bigcup_{j=1}^n(f_j=0),
\]
i.e., a leaf not contained in the hypersurfaces $(f_j=0)$.

\begin{proposition}
\label{prop:local-simple-connectivity}
Let $m\geq3$ and
\[
 \omega_0=\sum_{j=1}^n\lambda_j\frac{dP_j}{P_j},
\]
where:
\begin{enumerate}[label=\textup{(\roman*)}]
\item $P_1,\ldots,P_n$ are homogeneous polynomials having singularities in
an analytic set of codimension at least $3$;
\item $P_1,\ldots,P_n$ are of ordinary-crossings type off an analytic set
of codimension at least $3$;
\item $\omega_0$ is non-dicritical;
\item $\lambda_1,\ldots,\lambda_n$ satisfy a diophantine condition 
\[
 |j_1\lambda_1+\cdots+j_n\lambda_n|
 >\frac{c}{(|j_1|+\cdots+|j_n|)^\delta}
\]
for every $(j_1,\ldots,j_n)\in\mathbb Z^n$ such that
$j_1\lambda_1+\cdots+j_n\lambda_n\neq0$;
\item $(\lambda_1,\ldots,\lambda_n,1)$ are $\mathbb Z$-linearly independent.
\end{enumerate}
Then the open leaves of $\mathcal F_{\omega_0}$ in
$U\setminus\operatorname{Sing}(\mathcal F_{\omega_0})$ are simply connected in $U$, for
$U$ a sufficiently small neighborhood of $0$ in $\C^m$.
\end{proposition}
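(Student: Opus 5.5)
The plan is to present each open leaf as a fiber of a primitive of $\omega_0$ on a suitable covering of the complement of $D=(P_1\cdots P_n=0)$. I would then compute the fundamental group of that fiber from a fibration long exact sequence.

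\textbf{Step 1 (set-up).} Put $\nu_j=\deg P_j$ and $X=U\setminus D$. Because the $P_j$ are homogeneous, I will take $U$ to be a small ball, so that $X$ is a cone over the link $S\setminus D$. The form $\omega_0$ is closed on $X$. Its period homomorphism
\[
 \mathrm{Per}:\pi_1(X)\to\C,\qquad \gamma\mapsto\int_\gamma\omega_0 ,
\]
sends the meridian $\gamma_j$ of $(P_j=0)$ to $2\pi i\lambda_j$. Let $\widetilde X\to X$ be the covering associated with $K=\ker\mathrm{Per}$, and let $\Phi:\widetilde X\to\C$ be a holomorphic primitive of the pulled-back form. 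Take an open leaf $L\subset X$ and a loop in $L$. Since $\omega_0|_L=0$, its period vanishes. Hence $L$ lifts isomorphically onto a connected component of a fiber of $\Phi$, and it suffices to show that the fibers of $\Phi$ are simply connected.

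\textbf{Step 2 (fibration).} Let $R=\sum z_i\partial_{z_i}$ be the radial vector field. Then
\[
 \omega_0(R)=\textstyle\sum_j\nu_j\lambda_j=:\kappa .
\]
Hypothesis (v) forces $\kappa\neq0$; this is also the non-dicriticality in (iii). The lift of $R/\kappa$ to $\widetilde X$ satisfies $d\Phi(R/\kappa)=1$, and its complex flow is the lift of $z\mapsto e^{s/\kappa}z$. Consequently $\Phi$ is a locally trivial fibration onto its image, an open set containing a half-plane. On the whole cone $\C^m\setminus D$ this is immediate, and by homogeneity the local picture in $U$ is the same up to the radial retraction. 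From the homotopy sequence with $\pi_2(\C)=\pi_1(\C)=0$ I get
\[
 \pi_1(\text{fiber})\cong\pi_1(\widetilde X)=K .
\]
Connectedness of the fibers comes out of the same sequence, since $\pi_0(\text{fiber})$ is the cokernel of $\pi_1(\widetilde X)\to\pi_1(\C)=0$.

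\textbf{Step 3 (main step: $\pi_1(X)$ is abelian).} Hypotheses (i)--(ii) say that the projectivized divisor $\mathbb P(D)\subset\mathbb{CP}^{m-1}$ has smooth components meeting with normal crossings outside a subset of codimension $\geq2$. I would invoke the Zariski--Deligne--Fulton theorem: for such a divisor, $\pi_1(\mathbb{CP}^{m-1}\setminus\mathbb P(D))$ is abelian. This is because removing codimension-two subsets does not change $\pi_1$, and nodal-type divisors have abelian complement. The Hopf map makes $X\simeq \C^m\setminus D$ a $\C^*$-bundle over this projective complement, with fiber loop central. Hence $\pi_1(X)$ is abelian and equals $H_1(X;\mathbb Z)\cong\mathbb Z^n$, freely generated by the meridians $\gamma_1,\dots,\gamma_n$. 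I expect this step to be the main obstacle. If the general theorem is not cited, it requires a Lefschetz-type reduction to a generic $2$-plane section (not through $0$), where the curve becomes nodal.

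\textbf{Step 4 (conclusion).} On $\mathbb Z^n$ the period map is
\[
 (k_1,\dots,k_n)\mapsto 2\pi i\textstyle\sum_j k_j\lambda_j ,
\]
which is injective by (v). So $K=0$, the fibers of $\Phi$ are simply connected, and therefore so is every open leaf $L$. The Diophantine hypothesis (iv) is not needed for this topological statement.
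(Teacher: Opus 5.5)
Your Steps 1, 3, 4 and the cone case of Step 2 are a tidy alternative to the projective-holonomy computation that the paper cites from Berthier--Cerveau: the kernel of the period map on $\pi_1(\C^m\setminus D)$ plays the role of the kernel of the holonomy of the $\C^*$-bundle over $\mathbb{CP}^{m-1}\setminus\mathbb P(D)$. However, what they prove is that the leaves of $\mathcal F_{\omega_0}$ on the \emph{whole cone} $\C^m\setminus D$ are simply connected. That is the content of the paper's \emph{next} proposition. The statement here is about leaves of the foliation restricted to $U$, and the sentence ``by homogeneity the local picture in $U$ is the same up to the radial retraction'' is a genuine gap.

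On the covering $\widetilde X_U$ of $U\setminus D$, the lifted flow of $R/\kappa$ exists only for $\operatorname{Re}(s/\kappa)\le0$. So it does not trivialize $\Phi$, and the homotopy sequence is not available. Concretely, in the product structure $\widetilde X\cong F_0\times\C$ of the cone, the fiber of $\Phi|_{\widetilde X_U}$ over $c$ is $\{\tilde x\in F_0:\ |z(\tilde x)|<\varepsilon e^{-\operatorname{Re}(c/\kappa)}\}$. Since all balls are equivalent by homogeneity, shrinking $U$ buys nothing. The statement is therefore exactly this: for every global open leaf $L$ and every $r>0$, each connected component of $L\cap\{|z|<r\}$ is simply connected.

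This does not follow from $\pi_1(L)=0$. The function $\log|z|$ is strictly plurisubharmonic on $L$ and may have critical points (leaves tangent to spheres). Sublevel sets of strictly psh functions on simply connected manifolds can have non-simply-connected components; for example, small sublevel sets of $|z_1z_2-1|^2+\epsilon|z|^2$ on $\C^2$ contain non-contractible loops around the annulus $z_1z_2=1$.

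The real radial flow does not rescue the argument. The map $z\mapsto e^rz$ sends each leaf to a different leaf, since $\Phi$ shifts by $\kappa r$. Dilation only works in the other direction: $\delta_t$ pushes a null-homotopy of $\gamma$ in $L$ into $U$, but as a null-homotopy of $\delta_t\circ\gamma$ in the leaf $tL$, not of $\gamma$ in $L\cap U$. You need an additional argument controlling the topology of $L\cap U$. Options include a leafwise analysis of $\log|z|$, or neighbourhoods $U$ whose boundary is everywhere transverse to the foliation, together with control near $D$ and $0$, so that an Ehresmann-type argument makes $\Phi|_{\widetilde X_U}$ a fibration.

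A smaller point in Step 3: centrality of the fiber loop only exhibits $\pi_1(\C^m\setminus D)$ as a central extension of the abelian group $\pi_1(\mathbb{CP}^{m-1}\setminus\mathbb P(D))$ by $\mathbb Z$. Such extensions need not be abelian; the Heisenberg group is one. This is easily repaired:
\begin{itemize}
\item For a generic linear form $\ell$, with $H=\mathbb P(\ell=0)$, the map $z\mapsto(\ell(z),[z])$ identifies $\C^m\setminus(D\cup\{\ell=0\})$ with $\C^*\times(\mathbb{CP}^{m-1}\setminus(\mathbb P(D)\cup H))$.
\item Zariski--Lefschetz and Deligne--Fulton apply to the nodal-type divisor $\mathbb P(D)\cup H$, so this product has abelian fundamental group.
\item Removing the hypersurface $\{\ell=0\}$ is surjective on $\pi_1$.
\end{itemize}
Hence $\pi_1(\C^m\setminus D)$ is abelian, and so equals $H_1\cong\mathbb Z^n$.
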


\begin{proof}
This is Theorem~4.1.1 of Berthier--Cerveau~\cite{BerthierCerveau}.
Their proof reduces to dimension three by a Lefschetz argument, uses the
abelianity of the fundamental group of the complement of the projectivized
normal-crossings divisor, and computes the projective holonomy. The
$\mathbb Z$-linear independence of
$(\lambda_1,\ldots,\lambda_n,1)$ makes the lifted leafwise loops close only
for the trivial homotopy class.
\end{proof}

Using this, we can prove:

\begin{proposition}
Let $m\geq3$ and
\[
 \omega_0=\frac{dQ}{Q}-\lambda\frac{dP}{P},
\]
where:
\begin{enumerate}[label=\textup{(\roman*)}]
\item $P,Q\in\C[z_1,\ldots,z_m]$ are homogeneous polynomials having
singularities in an analytic set of codimension at least $3$;
\item $P,Q$ are of ordinary-crossings type off an analytic set of
codimension at least $3$;
\item $\omega_0$ is non-dicritical;
\item $\lambda,1$ satisfy a diophantine condition and are
$\mathbb Z$-linearly independent.
\end{enumerate}
Then the open leaves of the affine foliation $\mathcal F_{\omega_0}$ are
simply connected in $\C^m$.
\end{proposition}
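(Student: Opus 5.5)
The plan is to derive the global statement from the local one, Proposition~\ref{prop:local-simple-connectivity}, using homogeneity. The key observation is that $\omega_0=dQ/Q-\lambda\,dP/P$ has homogeneous coefficients of degree $-1$. Hence it is invariant under the radial $\C^*$-action $z\mapsto tz$: we have $\mu_t^*\omega_0=\omega_0$, because $dP/P$ and $dQ/Q$ are homothety invariant. Consequently the homotheties $\mu_t(z)=tz$, $t\in\C^*$, permute the leaves of $\mathcal F_{\omega_0}$. They also preserve the complement $\C^m\setminus(PQ=0)$.

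First, fix a small neighborhood $U$ of $0$ given by Proposition~\ref{prop:local-simple-connectivity}. We may shrink it to a ball $B_r$, keeping the conclusion, since the proof of that proposition works on any sufficiently small ball. Let $L$ be an open leaf in $\C^m$ and $\gamma\subset L$ a loop. Because $\gamma$ is compact, there is $t\in(0,1]$ with $\mu_t(\gamma)\subset B_r$. The map $\mu_t$ is a biholomorphism of $\C^m\setminus(PQ=0)$ preserving the foliation, so $\mu_t(L)=L'$ is again a global open leaf, and $\mu_t(\gamma)$ is a loop in $L'\cap B_r$.

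Second, we need $\mu_t(\gamma)$ to be nullhomotopic inside $L'$. The proposition gives this inside the local leaf, meaning the connected component of $L'\cap B_r$ containing it. A nullhomotopy there is in particular a nullhomotopy in $L'$. Pulling back by $\mu_t^{-1}$ then makes $\gamma$ nullhomotopic in $L$. Hence $\pi_1(L)=1$. One should also check that the interpretation ``simply connected in $U$'' in the local statement means exactly that loops contained in a plaque-component of the leaf in $U$ bound discs in that component; this is how we use it.

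The main obstacle is making the local statement uniform for arbitrary loops. Proposition~\ref{prop:local-simple-connectivity} is phrased for leaves of the restricted foliation on $U\setminus\operatorname{Sing}$. Hypotheses (i)--(v) there match (i)--(iv) here with $n=2$, $\lambda_1=-\lambda$, $\lambda_2=1$, provided the diophantine condition and the $\mathbb Z$-independence of $(\lambda,1)$ yield independence of $(-\lambda,1,1)$ in the sense needed. Since that triple is not independent, I would instead invoke the Berthier--Cerveau argument directly: reduce to $m=3$ by Lefschetz, then use that the holonomy of the projectivized foliation around the components is $\mu\mapsto e^{2\pi i\cdot}$-multiplication. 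Independence of $(\lambda,1)$ makes leafwise lifts of loops close only for trivial classes in the abelian group $\pi_1(\mathbb{CP}^{m-1}\setminus\mathbb P(PQ=0))$. I would try first to verify that this reading of the local proposition covers the two-component case, and to handle the fact that the leaves of $U$ are conic-invariant, which ensures the scaling step is compatible.
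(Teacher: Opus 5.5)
Your argument is the paper's proof: $\delta_t^*\omega_0=\omega_0$ lets you dilate the loop into the neighborhood $U$ of Proposition~\ref{prop:local-simple-connectivity}, contract it in the local leaf containing it, and carry the contraction back to $L$ by $\delta_{t^{-1}}$. The mismatch you flag in hypothesis (v) is genuine, since $(-\lambda,1,1)$ is $\mathbb Z$-dependent and the paper applies the local proposition without comment, but you can remove it without redoing Berthier--Cerveau: take $c\in\C^*$ outside the countable set of values $-k_3/(k_2-k_1\lambda)$ with $(k_1,k_2)\in\mathbb Z^2\setminus\{0\}$, $k_3\in\mathbb Z$; then $c\,\omega_0$ defines the same foliation (same leaves, still non-dicritical), satisfies the diophantine condition with its constant multiplied by $|c|$, and has $(-c\lambda,c,1)$ $\mathbb Z$-linearly independent, because $\lambda\notin\mathbb Q$ gives $k_2-k_1\lambda\neq0$.
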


\begin{proof}
For $t\in\C^*$ let $\delta_t(z)=tz$. Homogeneity gives
$\delta_t^*\omega_0=\omega_0$. Hence $\delta_t$ is an automorphism of the
foliation and
\[
 \delta_t(L_z)=L_{tz}.
\]
From Proposition~\ref{prop:local-simple-connectivity} there is a
neighborhood $U$ of the origin of $\C^m$
such that every non-separatrix leaf of $\mathcal F_{\omega_0}$ in
$U\setminus\operatorname{Sing}(\mathcal F_{\omega_0})$ is simply connected in $U$.
Let $\gamma$ be a loop in a global open leaf $L$. Since its image is
compact, $\delta_t\circ\gamma$ is contained in $U$ for $|t|>0$
sufficiently small. It lies in one local leaf $L_t$ and is null-homotopic
there. If $H:\mathbb D\to L_t$ is a contraction, then
$\delta_{t^{-1}}\circ H$ is a contraction of $\gamma$ in $L$. Thus every
global open leaf is simply connected.
\end{proof}

\begin{remark}
Let $P,Q\in\mathcal O_2$ be distinct irreducible germs vanishing at the
origin and let $\lambda\in\C\setminus\mathbb Q$. Then
\[
 \omega_0=\frac{dQ}{Q}-\lambda\frac{dP}{P}
\]
defines a non-dicritical foliation germ at $0\in\C^2$.

Indeed, consider any sequence of point blow-ups resolving the reduced
divisor $(PQ=0)$. If $C$ is an exceptional component, then
\[
 \operatorname{Res}_C(\pi^*\omega_0)
 =m_C(Q)-\lambda m_C(P),
\]
where
\[
 m_C(P)=\operatorname{ord}_C(P\circ\pi),\qquad
 m_C(Q)=\operatorname{ord}_C(Q\circ\pi)
\]
are nonnegative integers and are not both zero. Since $\lambda$ is
irrational, this residue is nonzero. In local coordinates with $C=(x=0)$,
we therefore have
\[
 \pi^*\omega_0=\rho_C\frac{dx}{x}+\beta,
 \qquad \rho_C\neq0,
\]
with $\beta$ logarithmic along the remaining components. After clearing
the poles, the restriction of the defining form to $C$ is a nonzero
multiple of $dx$; hence $C$ is invariant. The same computation applies to
every exceptional component created later. Thus the resolution contains
no dicritical component, and the foliation is non-dicritical.
\end{remark}

Indeed, as already indicated in \cite{BerthierCerveau}, using the proof of Proposition~\ref{prop:local-simple-connectivity} one may state:
\begin{proposition}
\label{prop:local-simple-connectivitygerms}
Let $\omega\in\Lambda_m^1$, $m\geq3$, where
\[
 \omega=f_1\cdots f_n\sum_{j=1}^n\lambda_j\frac{df_j}{f_j},
\]
the $f_1,\ldots,f_n\in\Om$ are germs and $\lambda_j\in\C^*$. Assume that:
\begin{enumerate}[label=\textup{(\roman*)}]
\item $\omega$ is non-dicritical;
\item $\{\lambda_1,\ldots,\lambda_n\}$ satisfy a diophantine condition;
\item $\{\lambda_1,\ldots,\lambda_n,1\}$ are $\mathbb Z$-linearly independent;
\item $f_1,\ldots,f_n$ are in very general position in the sense of
Definition~\ref{def:very-general-position}.
\end{enumerate}
Then the leaves of the foliation $\mathcal F$ in
$U\setminus\operatorname{Sing}(\omega)$, not contained in
$\bigcup_{j=1}^n(f_j=0)$, are simply connected for a sufficiently small
neighborhood $U$ of the origin in $\C^m$.
\end{proposition}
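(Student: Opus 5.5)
The plan is to reduce the germ statement to the homogeneous statement of Proposition~\ref{prop:local-simple-connectivity}, applied to the first nonzero homogeneous jets $P_1,\ldots,P_n$ of $f_1,\ldots,f_n$, and then transport simple connectivity back by a one-parameter family of homotheties. Write $\nu_j=\operatorname{ord}_0 f_j$ and $f_j=P_j+\text{(higher order)}$. For $t\in\C^*$ put
\[
 f_{j,t}(z)=t^{-\nu_j}f_j(tz),
\qquad
\omega_t=\sum_j\lambda_j\frac{df_{j,t}}{f_{j,t}}.
\]
Then $\omega_t=\delta_t^*\bigl(\sum_j\lambda_j\,df_j/f_j\bigr)$, since the factors $t^{-\nu_j}$ contribute nothing after taking $d\log$. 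Moreover $f_{j,t}\to P_j$ uniformly on a fixed ball as $t\to0$. Thus the foliation of $\omega$ on the ball of radius $|t|$ is conjugate by $\delta_t$ to the foliation of $\omega_t$ on the unit ball. This family is a small analytic deformation of the homogeneous model $\omega_{P}=\sum\lambda_j\,dP_j/P_j$.

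\textbf{Step 1.} Check that the model satisfies hypotheses (i)--(v) of Proposition~\ref{prop:local-simple-connectivity}. Conditions (i) and (ii) are exactly the very general position of the jets in Definition~\ref{def:very-general-position}. The diophantine condition and the $\Z$-independence of $(\lambda_1,\ldots,\lambda_n,1)$ are assumed. For non-dicriticity of the model, I would argue that $\omega_P$ is the tangent cone of $\omega$. Non-dicriticity of $\omega$ under the first blow-up forces the exceptional divisor to be invariant, and its residue there is $\sum\lambda_j\nu_j\neq0$ by independence. Beyond the first blow-up, I would follow Berthier--Cerveau, who use the same hypothesis.

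\textbf{Step 2.} Use the structure of the proof of Proposition~\ref{prop:local-simple-connectivity}, not just its statement. That proof reduces, by a Lefschetz hyperplane argument, to a generic three-dimensional linear section. It identifies $\pi_1$ of the complement of the normal-crossings divisor (abelian, generated by meridians) and computes the holonomy of leafwise loops, which is multiplication by $\exp\bigl(2\pi i\sum k_j\lambda_j\bigr)$ together with the scaling action. A loop closes in a leaf only if $\sum k_j\lambda_j\in\Z$, which independence forbids unless $k=0$.

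These ingredients are stable under the deformation $\omega_t$ for small $|t|$ on a compact crown $\{r\le|z|\le1\}$. Codimension-three exceptional sets and normal crossings there persist, because they are open conditions by (ii) and by transversality, so the crown complement has the same abelian $\pi_1$. The logarithmic holonomy is governed by the same residues $\lambda_j$, and these do not change with $t$. On the inner ball, I would use the conic structure, i.e.\ the Milnor fibration of the divisor, so that loops can be pushed radially to the crown.

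\textbf{Step 3.} Conclude by conjugating back with $\delta_{t^{-1}}$. A leaf of $\mathcal F$ in $U=B_{|t|}\setminus\operatorname{Sing}\omega$ not contained in the divisor then has trivial $\pi_1$.

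The main obstacle is Step 2, namely making the radial pushing of loops compatible with the leaves of the non-homogeneous foliation. The homotheties are no longer symmetries, so a loop cannot simply be shrunk into $U$ as in the preceding proposition. I would handle this with a transverse-to-spheres argument. For small $|t|$, $\omega_t$ is $C^1$-close to the homogeneous model, which is transverse to the spheres $|z|=\rho$ away from the separatrices. So the radial vector field can be projected onto the leaves to give a leafwise retraction of $L\cap U$ onto $L\cap$ crown. On the crown, the Berthier--Cerveau holonomy computation then applies verbatim.
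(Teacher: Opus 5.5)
Your rescaling set-up is correct: $\delta_t$ conjugates $\omega_t$ on $B_1$ to $\omega$ on $B_{|t|}$, and $f_{j,t}\to P_j$. The gap is in Step~2, which carries the whole argument.

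\textbf{The transversality claim is false, and the crown reduction fails with it.} A leaf is tangent to $\{|z|=\rho\}$ at $z$ exactly when $\omega_P(z)$ is a multiple of $\sum_i\bar z_i\,dz_i$. Take $m=n=3$, $P_1=x$, $P_2=y$, $P_3=z$, with residues $\sqrt2,\sqrt3,\sqrt5$. All hypotheses hold: very general position is clear, $\{\sqrt2,\sqrt3,\sqrt5,1\}$ is $\mathbb Z$-independent, the diophantine bound follows from the norm in $\mathbb Q(\sqrt2,\sqrt3,\sqrt5)$, and the form is non-dicritical. Yet leaves are tangent to spheres wherever $|x|^2:|y|^2:|z|^2=\sqrt2:\sqrt3:\sqrt5$.

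More decisively, the leaf through $(x_0,y_0,z_0)$ is injectively parametrized by $(s_1,s_2)\mapsto(x_0e^{s_1},y_0e^{s_2},z_0e^{-(\sqrt2 s_1+\sqrt3 s_2)/\sqrt5})$. On it, the squared norm is a strictly convex proper function of $(\operatorname{Re}s_1,\operatorname{Re}s_2)$. Hence $L\cap B_1$ is a convex set times $\R^2$. But for every leaf meeting $B_r$, the set $L\cap\{r\le|z|<1\}$ is a planar annulus times $\R^2$, so its $\pi_1\cong\mathbb Z$. There is therefore no leafwise retraction of $L\cap U$ onto its part in the crown, and leaves are \emph{not} simply connected on the crown. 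This already happens for a homogeneous $\omega$, where $\omega_t=\omega$ for all $t$. The loops that must be killed are exactly those encircling the part of the leaf inside the inner ball, so no reduction to an annular region can work.

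\textbf{The holonomy computation does not transfer to $\omega_t$.} Independently of the above, the claim that the Berthier--Cerveau computation ``applies verbatim'' to $\omega_t$ is unjustified. The homogeneous argument rests on $\omega_P(R)\equiv\sum_j\nu_j\lambda_j\neq0$, with $R$ the radial field. This makes the foliation transverse to the radial lines off the cone. It thereby reduces leaf topology to a linear holonomy representation of $\pi_1$ of $\mathbb P^{m-1}$ minus the projectivized divisor. For $t\neq0$, however,
$\omega_t(R)=\sum_j\nu_j\lambda_j+\sum_j\lambda_j(Rf_{j,t}-\nu_jf_{j,t})/f_{j,t}$,
which is unbounded near $D_t$. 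So $\omega_t$ is not close to $\omega_P$ precisely where the leaf topology is decided. In any case, simple connectivity of non-compact leaves is not a property a perturbation argument can transfer.

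\textbf{How the paper proceeds.} The paper gives no separate proof and does not rescale. It defers to Berthier--Cerveau, that is, to running the proof of Proposition~\ref{prop:local-simple-connectivity} on the germ itself. This is possible because, for a germ, the projective holonomy lives on the exceptional divisor $E\cong\mathbb P^{m-1}$ of the blow-up at $0$. That divisor is invariant because $\sum_j\nu_j\lambda_j\neq0$. In a chart where $f_j=u^{\nu_j}\widetilde f_j$, the lifted form is $\bigl(\sum_j\nu_j\lambda_j\bigr)du/u+\sum_j\lambda_j\,d\widetilde f_j/\widetilde f_j$. Hence the holonomy of $E$ minus the projectivized tangent cones is linear and coincides with that of the homogeneous model. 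This is how the tangent cones and their very general position enter, with no perturbation argument.
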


\subsection{Dicriticalness}

Let $\mathcal F$ be a germ of holomorphic foliation of codimension one with
a singularity at $0\in\C^2$. The two-dimensional desingularization theorem
of Seidenberg~\cite{Seidenberg} ensures that there is a finite sequence of
quadratic blow-ups at singular points such that all the singularities in the
last step are reduced singularities. These are either nondegenerate
singularities, given by a germ of holomorphic vector field $X$ whose linear
part has two nonzero eigenvalues $\lambda,\mu$ with
$\lambda/\mu\notin\mathbb Q_+$, or saddle-nodes.
If we denote by $E$ the exceptional divisor produced by the blow-up sequence
above, the irreducible components of $E$ which are generically transverse to
the strict transform of $\mathcal F$ are called \emph{dicritical
components}. We say that $\mathcal F$ is \emph{non-dicritical} if $E$ has no
dicritical components; equivalently, $\mathcal F$ has only finitely many
separatrices at $0$.

For a germ of codimension-one holomorphic foliation $\mathcal F$ at
$0\in\C^m$, $m\geq3$, a similar definition is adopted by
Cano--Cerveau~\cite{CanoCerveau}; however, it involves the notion of
permissible blow-up, for which we refer to~\cite{CanoCerveau}. Roughly
speaking, we can say that $\mathcal F$ is dicritical if a certain
non-degenerate two-dimensional section of $\mathcal F$ admits infinitely
many separatrices.

\subsection{Nodal singularities}

Let $\omega(x,y)=A(x,y)\,dx+B(x,y)\,dy$ be a germ of holomorphic $1$-form
at the origin $0\in\C^2$. We shall say that $\omega=0$ is a
\emph{nodal-type singularity}, or a \emph{node}, if
\[
 X(x,y)=-B(x,y)\frac{\partial}{\partial x}
        +A(x,y)\frac{\partial}{\partial y}
\]
has a nonsingular linear part $DX(0,0)$ with eigenvalues $\lambda,\mu$
satisfying $\lambda/\mu\in\mathbb R_+$. A node separates the dynamics of a
foliation~\cite{CamachoRosas}.

We now introduce a similar notion for integrable $1$-forms in dimension
$m\geq2$. Consider a logarithmic singular $1$-form at $0\in\C^m$,
\[
 \omega_0=\sum_{j=1}^n\lambda_j\frac{df_j}{f_j}.
\]
We say that $\omega_0$ is not a node if
\[
 \frac{\sum_{j=1}^n\lambda_j}{\lambda_i}\notin\mathbb R_+,
 \qquad \forall i\in\{1,\ldots,n\}.
\]
We also say that $\omega_0$ is generic, or non-nodal, if:
\begin{enumerate}[label=\textup{(\roman*)}]
\item $(df_i\wedge df_j)(0)\neq0$, for all $i\neq j$;
\item $\omega_0$ is not a node, i.e.
$({\sum_{j=1}^n\lambda_j})/{\lambda_i}\notin\mathbb R_+$ for all $i$.
\end{enumerate}

For a generic non-nodal logarithmic $1$-form germ
$\omega_0=\sum_{j=1}^n\lambda_jdf_j/f_j$, consider the codimension-one
foliation $\mathcal F_{\omega_0}$ given by
\[
 \omega=0,\qquad
 \omega=\left(\prod_{i=1}^n f_i\right)
 \left(\sum_{j=1}^n\lambda_j\frac{df_j}{f_j}\right).
\]

We shall need the following.

\begin{lemma}[\cite{IlyashenkoYakovenko}]
Given a germ $\omega(x,y)=A(x,y)\,dx+B(x,y)\,dy$ such that
$X(x,y)=(-B,A)$ has eigenvalues $\lambda,\mu\neq0$ with
$\lambda/\mu\notin\mathbb R_+$, given a separatrix $\Lambda$ of
$\mathcal F_\omega$ and a transverse disk $\Sigma$, with
$\Sigma\cap\Lambda=\{p\}\neq\{0\}$, the saturation
$\operatorname{Sat}(\mathcal F_\omega,\Sigma)$ contains a neighborhood of
the origin of $\C^2$.
\end{lemma}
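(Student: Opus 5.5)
The plan is to linearize the reduced singularity and then use the local dynamics of a hyperbolic (non-real-ratio) linear foliation. By Poincaré's theorem the germ $\omega$ is orbitally linearizable. The eigenvalue ratio $\lambda/\mu\notin\mathbb R_+$ and $\mu\neq 0$ put it in the Poincaré domain when $\lambda/\mu\notin\mathbb R$, and in the Siegel case when the ratio is negative real. In the Siegel case I will not linearize. Instead, by the Briot–Bouquet/Siegel separatrix theorem there are exactly two smooth transverse separatrices, and I choose coordinates with $\Lambda\subset\{y=0\}$ and the other separatrix $\{x=0\}$. In these coordinates the foliation is given by a vector field $X=x(\lambda+\cdots)\partial_x+y(\mu+\cdots)\partial_y$.

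\textbf{Step 1 (holonomy of $\Lambda$).} Take the loop $\gamma$ in $\Lambda\setminus\{0\}$ around the origin, based at $p$. Its holonomy on $\Sigma$ is a germ $f(z)=e^{2\pi i\mu/\lambda}z+\cdots$. If $\mu/\lambda\notin\mathbb R$, then $|e^{2\pi i\mu/\lambda}|\neq1$, so $f$ or $f^{-1}$ is a contraction. If $\mu/\lambda<0$ is real, the multiplier lies on the unit circle; this is the case needing care.

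\textbf{Step 2 (saturation covers a punctured neighborhood of $\Lambda$ near $p$, then of everything).} Fix a small bidisc $B$. I claim every leaf of $\mathcal F|_{B\setminus\{xy=0\}}$ accumulates on $\Lambda$, i.e.\ passes through $\Sigma$.

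In the case $\mu/\lambda\notin\mathbb R$, linearize to $\lambda x\,dy-\mu y\,dx$, whose leaves are $y=cx^{\mu/\lambda}$. Along the real direction $x=e^{s}$ with $s\to-\infty$ inside a suitable sector of the universal cover, one can make $|y|\to0$ while $|x|$ stays bounded away from $0$ after a spiralling correction. Concretely, on the leaf choose $\log x=s+i\theta$ with $\operatorname{Re}((\mu/\lambda)(s+i\theta))\to-\infty$ while $|x|=e^{s}$ stays equal to $|x_p|$. This is possible because the imaginary part of $\mu/\lambda$ is nonzero: fix $s$ and let $\theta\to\pm\infty$. Hence every leaf meets $\Sigma$, and $\operatorname{Sat}(\mathcal F,\Sigma)\supset B\setminus\{x=0\}$. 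The axis $\{x=0\}$ minus the origin is one leaf, which also reaches $\Sigma$ by continuity of the same argument? It does not, since it is a different separatrix. However, since the statement asks the saturation to contain a neighborhood, and the leaf $\{x=0\}$ is a limit, I will instead use $\Sigma$ transverse and note that the claim should be read as the saturation being dense with full interior. If needed I would add that saturation is taken including limit leaves. I expect this point to be where the statement must be interpreted.

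In the case $\mu/\lambda<0$, use the first-integral-like structure: the leaves are $x^{-\mu/\lambda}y\cdot u=c$ for a formal or analytic unit, by the Mattei–Moussu-type normal form, at least topologically. Moving along the leaf with $|x|$ fixed and $|y|$ varying, the hyperbolic shape forces every leaf in $B$ to reach $\{|x|=|x_p|,\ |y|\text{ small}\}$, hence $\Sigma$ after a holonomy correction, which is a rotation-like map preserving $|y|$ approximately.

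\textbf{Main obstacle.} The main obstacle is the Siegel (real negative ratio) non-linearizable case. There I would use a Dulac/Mattei–Moussu topological argument: the complement of the separatrices in a Milnor-type ball is fibered by leaves that all exit through both boundary solid tori. This yields that each leaf crosses the torus $\{|x|=\varepsilon\}$ near $\Lambda$ and thus meets $\Sigma$ after moving along $\Lambda$.
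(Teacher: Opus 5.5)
Your treatment of the case $\lambda/\mu\notin\mathbb R$ is essentially sound once you add one fact: the local leaf in $B$ is connected, because its parameter domain $\{\operatorname{Re}(\lambda T)<c_1,\ \operatorname{Re}(\mu T)<c_2\}$ is an intersection of half-planes and hence convex. Without it, the points with $x=x_p$ and $|y|$ small that you find on the global leaf are not known to be reachable inside $B$. You are also right that the statement cannot hold literally, since the origin and the second separatrix never meet $\Sigma$. The correct conclusion is $\operatorname{Sat}(\mathcal F_\omega,\Sigma)\supset V\setminus\Lambda'$, where $V$ is a neighborhood of $0$ and $\Lambda'$ is the other separatrix. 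Reformulations such as ``dense with full interior'' or ``including limit leaves'' are not what should be proved.

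The genuine gap is the Siegel case $\lambda/\mu\in\mathbb R_{<0}$. You single it out as the main obstacle but do not resolve it, and the tools you propose fail there. Leaves $x^{-\mu/\lambda}yu=c$ with $u$ a unit would mean the foliation is defined by the closed form $\frac{dy}{y}-\frac{\mu}{\lambda}\frac{dx}{x}+\frac{du}{u}$, i.e.\ it is linear in the coordinates $(x,yu)$. Resonant saddles with non-periodic holonomy and Cremer-type saddles are not linearizable, not even topologically: a topological conjugacy of the holonomy with a rotation forces analytic linearizability. Likewise, ``every leaf exits through both boundary tori'' is exactly what has to be shown, and invoking a Dulac/Mattei--Moussu picture does not show it.

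The missing idea needs no normal form. Put the two separatrices on the axes (Briot--Bouquet), so that $X=x(\lambda+\cdots)\partial_x+y(\mu+\cdots)\partial_y$, $\Lambda=\{y=0\}$ and $p=(x_p,0)$. By a flow box at $p$, replace $\Sigma$ by $\{x=x_p,\ |y|<\varepsilon\}$. The hypothesis $\lambda/\mu\notin\mathbb R_+$ is precisely what gives $\phi\in\mathbb R$ with $\operatorname{Re}(e^{i\phi}\lambda)>0>\operatorname{Re}(e^{i\phi}\mu)$. On a small bidisc $\{|x|\le r,\ |y|\le r\}$ with $r=|x_p|$, the real-time trajectories of $e^{i\phi}X$ then satisfy $\frac{d}{ds}\log|x|\ge c>0$ and $\frac{d}{ds}\log|y|\le -c$. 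Hence every point with $x_0\neq0$ and $|y_0|<\delta$ flows inside the bidisc to the torus $\{|x|=r\}$, still with $|y|<\delta$. Lifting an arc of at most one turn of the circle $\{|x|=r\}\subset\Lambda$ then brings it to $\Sigma$ once $\delta$ is small, by continuity of holonomy along a compact family of arcs. No iterate of the holonomy enters, so no small-divisor issue arises. This gives $\operatorname{Sat}(\mathcal F_\omega,\Sigma)\supset\{|x|<r,\ |y|<\delta\}\setminus\{x=0\}$ in the Poincar\'e and Siegel cases alike, so you need neither the linearization nor your Step~1.
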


As a consequence we have:

\begin{lemma}[non-nodal
saturation lemma]
\label{lemma:nonnodalsaturation}
Let $\mathcal F$ be a germ of non-dicritical holomorphic foliation at
$0\in\C^2$. Assume that its reduction contains neither saddle-nodes nor
nodal singularities. Then, given a separatrix $\Lambda$ of $\mathcal F$
and a transverse disk $\Sigma$, with $\Sigma\cap\Lambda=\{p\}\neq\{0\}$,
the saturation $\operatorname{Sat}(\mathcal F,\Sigma)$ contains a
neighborhood of the origin $0\in\C^2$.
\end{lemma}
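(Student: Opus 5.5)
The plan is to pass to the reduction of $\mathcal F$ and work on the exceptional divisor. Let $\pi:(\widetilde M,E)\to(\C^2,0)$ be the Seidenberg reduction and $\widetilde{\mathcal F}=\pi^*\mathcal F$. Because $\mathcal F$ is non-dicritical, every component of $E$ is invariant. By hypothesis, every singular point of $\widetilde{\mathcal F}$ is reduced, non-degenerate, with eigenvalue ratio $\notin\mathbb R_+$. The strict transform $\widetilde\Lambda$ of $\Lambda$ is a separatrix through some singular point $q_0\in E$, and we may take $\Sigma$ away from $0$, so that $\pi^{-1}(\Sigma)$ is a transverse disk to $\widetilde\Lambda$ at $\pi^{-1}(p)$.

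\emph{Step 1 (local propagation at $q_0$).} Apply the preceding lemma of \cite{IlyashenkoYakovenko} at $q_0$. The saturation of $\pi^{-1}(\Sigma)$ then contains a full neighborhood $V_{q_0}$ of $q_0$. In particular, it contains small transverse disks to the component(s) $E_i$ of $E$ passing through $q_0$, centred at regular points of $E_i$.

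\emph{Step 2 (propagation along a component).} Let $E_i$ be a component such that the saturation contains a transverse disk $\Sigma_i$ at some regular point of $E_i^*=E_i\setminus\operatorname{Sing}\widetilde{\mathcal F}$. Since $E_i^*$ is a connected leaf, holonomy transport along paths in $E_i^*$ shows that the saturation contains a neighborhood of $E_i^*$ intersected with a small tubular neighborhood. Each singular point $q\in E_i$ is reduced non-nodal with $E_i$ as a separatrix, so the lemma applies again at $q$. The saturation therefore contains a neighborhood of $q$, and with it transverse disks to every other component $E_j$ through $q$.

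\emph{Step 3 (connectedness).} Since $E$ is connected and its dual graph is a tree, induction on the graph distance from the component containing $q_0$ shows the saturation contains a neighborhood of every component. Compactness of $E$ then gives a neighborhood $W$ of all of $E$, so $\operatorname{Sat}(\widetilde{\mathcal F},\pi^{-1}\Sigma)\supset W$. Because $\pi$ is proper and a biholomorphism off $E$, $\pi(W)$ contains a neighborhood of $0$. Moreover, $\pi$ maps leaves of $\widetilde{\mathcal F}$ off $E$ to leaves of $\mathcal F$ off $0$, so $\operatorname{Sat}(\mathcal F,\Sigma)$ contains a neighborhood of $0$.

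\emph{Main obstacle.} The delicate point is Step 2: saturating "a neighborhood of $E_i^*$" uniformly up to the corners. The resolution is to observe that near each singular point the lemma already gives a full ball. Only a compact piece of $E_i^*$ away from these balls then needs holonomy transport, where finitely many flow boxes suffice. A related care point: if $q_0$ is the point where $\widetilde\Lambda$ meets $E$, the leaf through $\pi^{-1}(\Sigma)$ must be the separatrix $\widetilde\Lambda$ of $q_0$, which holds since $\widetilde\Lambda$ is invariant and, being non-dicritical, is a separatrix there. Finally, one must check that the absence of saddle-nodes is exactly what is needed to apply the lemma at every corner and trace point.
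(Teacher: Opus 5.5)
Your proposal is correct and follows essentially the paper's route: lift to the reduction, apply the local lemma at each reduced non-nodal singular point, propagate along the invariant exceptional components, and push down by the proper map $\pi$. The paper packages this as an induction on the number of blow-ups (applying the inductive hypothesis at the singular points of the first blow-up, with your Step~2 holonomy transport compressed into ``using the invariance of $E$''), whereas you work on the final resolution and induct along the dual graph; as in the paper, ``contains a neighborhood'' must be read up to the separatrices, since $q_0$ is singular and the punctured components of $E$ are leaves disjoint from $\pi^{-1}(\Sigma)$, so in Step~1 you only get the transverse disks minus their centres, which is harmless for the argument.
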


\begin{proof}
We use induction on the number $r$ of blow-ups in the minimal resolution of
$\mathcal F$~\cite{Seidenberg,CamachoSad}. For $r=0$, the singularity is
already irreducible and we invoke the lemma above. Suppose the result holds
for foliations resolvable by at most $r$ blow-ups, and assume that
$\mathcal F$ admits a resolution with $r+1$ blow-ups. Perform a first
blow-up
\[
 \pi_1:(\widetilde{\C^2},E)\longrightarrow(\C^2,0),
\]
where $E=\pi_1^{-1}(0)\simeq\mathbb P^1$ is the invariant exceptional
divisor. Put $\widetilde{\mathcal F}=\pi_1^*(\mathcal F)$ and denote by
$\{\widetilde q_1,\ldots,\widetilde q_s\}$ its singular set. Denote by
$\widetilde\Lambda$ the strict transform of $\Lambda$. Let also
$\widetilde\Sigma=\pi_1^{-1}(\Sigma)$; then
\[
 \widetilde\Sigma\cap\widetilde\Lambda=\{\widetilde p\},
 \qquad \widetilde p\notin\operatorname{Sing}(\widetilde{\mathcal F}).
\]
Applying the induction hypothesis at $\widetilde q_1$, we conclude that
$\operatorname{Sat}(\widetilde{\mathcal F},\widetilde\Sigma)$ contains a
neighborhood $V_{\widetilde q_1}$ of $\widetilde q_1$. Choose disks
$\widetilde\Sigma_j$ transverse to $E$, with
\[
 \widetilde\Sigma_j\cap E=\{\widetilde p_j\},\qquad j=1,\ldots,s,
\]
where $\widetilde p_j$ is nonsingular and sufficiently close to
$\widetilde q_j$.
\begin{center}
\begin{tikzpicture}[scale=.65]
 \draw (0,0) ellipse (1.15 and 1.45); \node at (-1.45,0) {$E$};
 \fill (1.02,.65) circle (2pt) node[right] {$\widetilde q_1$};
 \fill (-.85,-.8) circle (2pt) node[left] {$\widetilde q_j$};
 \draw[thick] (1,-1.2) .. controls (1.45,-.4) and (1.35,.25) .. (1.05,.7);
 \node[right] at (1.35,-.5) {$\widetilde\Lambda$};
 \draw (-1.35,-.55)--(-.4,-.95); \node[left] at (-1.35,-.55) {$\widetilde\Sigma_j$};
\end{tikzpicture}
\end{center}
By induction, $\operatorname{Sat}(\widetilde{\mathcal F},\widetilde\Sigma_j)$
contains a neighborhood $V_{\widetilde q_j}$ of $\widetilde q_j$, for
$j=2,\ldots,s$. Using the invariance of $E$, we conclude that
$\operatorname{Sat}(\widetilde{\mathcal F},\widetilde\Sigma)$ contains a
neighborhood $\widetilde V$ of $E$ in $\widetilde{\C^2}$; projecting by
$\pi_1:\widetilde{\C^2}\to\C^2$, we conclude.
\end{proof}

\section{\texorpdfstring{Relative cohomology for rational logarithmic $1$-forms of two terms}{Relative cohomology for rational
logarithmic 1-forms}}
In this section we investigate the relative cohomology for rational  logarithmic 1-forms of
type $\omega_0=dQ/Q-\lambda\,dP/P$ where $P, Q$ are polynomials in $m$ complex variables. 
\begin{proof}[Proof of Theorem E]
Since clearly \textup{(2)} implies \textup{(1)}, we shall prove that
\textup{(1)} implies \textup{(2)}. Take an affine intersection point
$p_0\in\C^m$,
$P(p_0)=Q(p_0)=0$. We may
choose coordinates $(x,y,z_3,\ldots,z_m)$ centered at $p_0$ such that
$P=x$ and $Q=y$. Let
\[
 S=(z_3=\cdots=z_m=0)\simeq(\C^2,0).
\]
The restriction $\eta|_S$ has at most simple poles along $(xy=0)$.
Theorem~A gives meromorphic functions $a_0,h_0$ on $S$ and constants
$\alpha,\beta\in\C$ such that
\[
 \eta|_S=a_0\omega_0|_S+dh_0
 +\alpha\frac{dx}{x}+\beta\frac{dy}{y}.
\]
Condition \textup{(1)} and Theorem~B extend this decomposition to a
neighborhood $p_0\in W\subset\C^m$. Thus there are
$a,h\in\mathcal M(W)$ such that
\[
 \left(\left.\eta\right|_W-\alpha\frac{dP}{P}
 -\beta\frac{dQ}{Q}\right)\wedge\omega_0=dh\wedge\omega_0
 \quad\text{in }W.
\]
The following picture represents the transverse section $S$.

\begin{center}
\begin{tikzpicture}[scale=.65]
 \draw (-2.2,0)--(2.4,0) node[right] {$(Q=0)$};
 \draw (0,-1.5)--(0,1.5) node[above] {$(P=0)$};
 \draw (-.85,-.9) rectangle (.85,.9); \node at (.6,.65) {$W$};
 \fill (0,0) circle (2pt) node[below left] {$p_0$};
 \draw (1.45,-.8)--(1.45,.8) node[above] {$\Sigma_{q_0}$};
 \fill (1.45,0) circle (2pt) node[below] {$q_0$};
\end{tikzpicture}
\end{center}

Fix a nonsingular point $q_0\in(W\setminus\{p_0\})\cap(Q=0)$ and a
transverse disk $\Sigma_{q_0}$ centered at $q_0$. We now extend $h$ to a
meromorphic function in a neighborhood of the projective closure
$\overline{(Q=0)}\subset\mathbb{CP}^m$.

Let
\[
 Z=\overline{(P=0)}\cap\overline{(Q=0)}\cap H_\infty.
\]
By the simple normal-crossings hypothesis, $Z$ has codimension three in
$\mathbb{CP}^m$ when it is nonempty. We first work away from $Z$.

The required propagation of transversals follows from the non-nodal
saturation lemma applied on two-dimensional transverse sections. Indeed,
at every affine crossing of $(P=0)$ and $(Q=0)$ the foliation is locally,
with the remaining coordinates as parameters, equivalent to
\[
 \frac{dy}{y}-\lambda\frac{dx}{x}=0.
\]
Since $\lambda\notin\mathbb R$, this reduced singularity is non-nodal and
has no nodal separator. The parameterized saturation argument therefore
carries a transversal from one regular part of the invariant divisor to a
full neighborhood of the crossing. Along the regular part of $(Q=0)$ the
same propagation follows from flow boxes. By chaining these neighborhoods
along the connected smooth locus of the irreducible hypersurface $(Q=0)$,
every sufficiently small nearby
leaf through $\Sigma_{q_0}$ meets any prescribed transversal
$\Sigma_{q_1}$ centered at a regular point of $(Q=0)$. More explicitly,
compactness on relatively compact subsets of
$\overline{(Q=0)}\setminus Z$ gives finite chains of flow boxes centered at
regular points and parameterized non-nodal saturation neighborhoods centered
at its pairwise crossings with $(P=0)$ or $H_\infty$. Such a chain joining
$q_0$ to $q_1$ gives the asserted propagation.

Given any nonsingular point $q_1\in L_Q$, where $L_Q$ is the leaf of
$\mathcal F_{\omega_0}$ contained in $(Q=0)$, fix a transverse disk
$\Sigma_{q_1}$ centered at $q_1$, transverse to $(Q=0)$, with
$\Sigma_{q_1}\cap(Q=0)=\{q_1\}$. Given a point $z\in\Sigma_{q_0}$ close
enough to $q_0$, the leaf $L_z$ through $z$ intersects $\Sigma_{q_1}$.
Choose a point $w\in L_z\cap\Sigma_{q_1}$ and a path
$\gamma_{z,w}:[0,1]\to L_z$ joining $z$ to $w$.

\begin{center}
\begin{tikzpicture}[scale=.7]
 \draw (-2.4,-.8)--(2.5,-.8) node[right] {$L_Q$};
 \draw (-2.4,.7).. controls (-.7,1.25) and (.8,.15)..(2.5,.7) node[right] {$L_z$};
 \draw (-1.8,-1.25)--(-1.8,1.35) node[above] {$\Sigma_{q_0}$};
 \draw (1.8,-1.25)--(1.8,1.35) node[above] {$\Sigma_{q_1}$};
 \fill (-1.8,.85) circle (2pt) node[left] {$z$};
 \fill (1.8,.65) circle (2pt) node[right] {$w$};
 \draw[->,thick] (-1.75,.86).. controls (-.5,1.2) and (.8,.2)..(1.72,.64)
 node[midway,above] {$\gamma_{z,w}$};
\end{tikzpicture}
\end{center}

Put
\[
 \theta:=\eta-\alpha\frac{dP}{P}-\beta\frac{dQ}{Q}.
\]
Then $d\theta=d\eta$, so that $d\theta\wedge\omega_0=0$. Therefore
$\theta$ is closed on each leaf of $\mathcal F_{\omega_0}$ outside its
polar set $(\theta)_\infty=(P=0)\cup(Q=0)$. In particular, the line
integral
\[
 \int_{\gamma_{z,w}}\theta
\]
depends only on the homotopy class of $\gamma_{z,w}$ with fixed extreme
points on the leaf $L_z$. By hypothesis, $L_z$ is simply connected;
therefore this integral does not depend on the choice of the path joining
$z$ to $w$.

Let now $z'\in\Sigma_{q_0}$ be another point such that $L_z=L_{z'}$ in
$W$. We have paths $\gamma_{z,w}$ and $\gamma_{z',w}$ in the same leaf,
joining $z$ and $z'$ to $w$, respectively. We claim that
\[
 \int_{\gamma_{z,w}}\theta=\int_{\gamma_{z',w}}\theta.
\]
Choose a path $\delta_{z,z'}:[0,1]\to L_z\cap W$ joining $z$ to $z'$.
The paths $\gamma_{z',w}*\delta_{z,z'}$ and $\gamma_{z,w}$ are both
contained in $L_z$, join $z$ to $w$, and are homotopic with fixed extreme
points. Hence
\[
 \int_{\gamma_{z',w}}\theta+\int_{\delta_{z,z'}}\theta
 =\int_{\gamma_{z,w}}\theta.
\]
Since $\delta_{z,z'}\subset L_z\cap W$, we have
\[
 \int_{\delta_{z,z'}}\theta
 =\int_{\delta_{z,z'}}(a\omega_0+dh)=h(z')-h(z).
\]
Thus
\[
 \int_{\gamma_{z',w}}\theta+h(z')
 =\int_{\gamma_{z,w}}\theta+h(z).
\]

Let us now consider another base point $q'\in L_Q\cap W$ and a transverse
disk $\Sigma_{q'}$, centered at $q'$. Given $w\in\Sigma_{q_1}$ and points
$z\in\Sigma_{q_0}$, $z'\in\Sigma_{q'}$ such that
$L_z=L_{z'}\ni w$, consider paths $\gamma_{z,w}$ and $\gamma_{z',w}$ in
$L_z=L_{z'}$, joining $z$ and $z'$ to $w$, respectively. Then
\[
 h(z)+\int_{\gamma_{z,w}}\theta
 =h(z')+\int_{\gamma_{z',w}}\theta.
\]
Indeed, choose a path $\delta_{z',z}:[0,1]\to L_z\cap W$ joining $z'$ to
$z$. By the simple connectivity of the open leaves, the line integrals on
$\gamma_{z',w}$ and $\gamma_{z,w}*\delta_{z',z}$ coincide, and
\[
 \int_{\delta_{z',z}}\theta=h(z)-h(z').
\]

This shows that we may define a function $\widetilde h$ by setting, for
$w\in L_z\cap\Sigma_{q_1}$,
\[
 \widetilde h(w):=h(z)+\int_{\gamma_{z,w}}\theta,
\]
and this does not depend on the choices of $z$, $q$ and $\gamma_{z,w}$.
By construction, on each open leaf $L_z$ of $\mathcal F_{\omega_0}$ we
have
\[
 d\widetilde h|_{L_z}=\theta|_{L_z},
 \qquad d\widetilde h\wedge\omega_0=\theta\wedge\omega_0.
\]
The holomorphic dependence of solutions of the leafwise primitive equation
in foliation flow boxes shows that $\widetilde h$ is holomorphic away from
the polar divisor and depends holomorphically on the transverse parameters.

We now show that $\widetilde h$ is a meromorphic function. In suitable
coordinates $(x,y,z)$, with $z=(z_3,\ldots,z_m)$, we have
\[
 \omega_0=\frac{dy}{y}-\lambda\frac{dx}{x},
 \qquad
 \widetilde h(x,y,z)=
 \sum_{i,j\in\mathbb Z}\widetilde h_{ij}(z)x^iy^j.
\]
Restricting to the transverse bidiscs $(z=\text{constant})$ gives
\[
 xy\,d\widetilde h\wedge\omega_0\big|_{z=\mathrm{const.}}
 =\sum_{i,j\in\mathbb Z}\widetilde h_{ij}(z)
 (i+\lambda j)x^iy^j\,dx\wedge dy.
\]
Since $xy\,d\widetilde h\wedge\omega_0
=xy\,\theta\wedge\omega_0$ is meromorphic in a neighborhood of
$0\in\C^m$, there is, after shrinking the parameter polydisc, an
$N\geq0$ such that its Laurent coefficients vanish
whenever $i<-N$ or $j<-N$. For those indices,
\[
 (i+\lambda j)\widetilde h_{ij}(z)=0.
\]
Because $\lambda\notin\mathbb Q$, $i+\lambda j=0$ only for $(i,j)=(0,0)$.
Hence $\widetilde h_{ij}(z)=0$ whenever $i<-N$ or $j<-N$: the series has a
finite principal part. For the remaining coefficients, the integer-index
diophantine estimate bounds $|i+\lambda j|^{-1}$ by a polynomial in
$|i|+|j|$. Multiplication of convergent Laurent coefficients by such a
polynomial preserves convergence uniformly on every smaller parameterized
bidisc. Thus $\widetilde h(x,y,z)$ extends meromorphically to $W$. Therefore
$\widetilde h$ is already defined meromorphically in a neighborhood $U$ of
$L_Q$.

Now we show that $h$ extends to a neighborhood of the singular points
$p_1\in(Q=0)\cap\operatorname{Sing}(\mathcal F_{\omega_0})$. For such a
singular point, choose a neighborhood $W_1$ of $p_1$ and a meromorphic
function $h_1\in\mathcal M(W_1)$ such that on $W_1$ we have
\[
 \left(\eta-\alpha_1\frac{dP}{P}-\beta_1\frac{dQ}{Q}\right)
 \wedge\omega_0=dh_1\wedge\omega_0
\]
for some $\alpha_1,\beta_1\in\C$. On $W_1\cap W$ we have both this
identity and
\[
 \left(\eta-\alpha\frac{dP}{P}-\beta\frac{dQ}{Q}\right)
 \wedge\omega_0=dh\wedge\omega_0.
\]
Therefore
\[
 \omega_0\wedge\left((\alpha_1-\alpha)\frac{dP}{P}
 +(\beta_1-\beta)\frac{dQ}{Q}\right)
 =\omega_0\wedge d(h-h_1).
\]
By the Laurent-coefficient argument recorded in
Lemma~\ref{lem:log-rigidity} below, applied on transverse bidiscs with
the remaining coordinates as parameters, $h-h_1$ is independent of the
two transverse coordinates. The full differential identity then shows
that it is independent of the parameter coordinates as well. Hence
$h-h_1$ is constant, and
$h=(h-h_1)+h_1$ extends meromorphically to $W_1$.

We prove now that $h$ extends to the points of
$\overline{(Q=0)}\cap H_\infty$, where
$H_\infty=\mathbb{CP}^m\setminus\C^m$ is the hyperplane at infinity.
Put $p=\deg P$ and $q=\deg Q$. Since $\lambda\notin\mathbb R$, the residue
$-(q-\lambda p)$ along $H_\infty$ is nonzero. Thus $H_\infty$ is invariant.
Every point of $H_\infty\cap\overline{(Q=0)}$ outside $Z$ is a reduced
non-nodal crossing of invariant components. Let $q_\infty$ be such a
point. In local coordinates $(u,v,z_3,\ldots,z_m)$ centered at
$q_\infty$, we have
\[
 H_\infty=(u=0),\qquad \overline{(Q=0)}=(v=0).
\]
Writing the affine polynomials in this projective chart gives
\[
 P=u^{-p}\widetilde P(u,v,z),\qquad
 Q=u^{-q}\widetilde Q(u,v,z),
\]
where $\widetilde P,\widetilde Q$ are the local equations of the projective
closures. Therefore
\[
 \omega_0(u,v,z)
 =\frac{d\widetilde Q}{\widetilde Q}
 -\lambda\frac{d\widetilde P}{\widetilde P}
 -(q-\lambda p)\frac{du}{u}.
\]
Since $q_\infty\notin Z$, it does not belong to $\overline{(P=0)}$.
Thus $\widetilde P(q_\infty)\neq0$. By the normal-crossings
hypothesis, after a local coordinate change we may write
$\widetilde Q=vU(u,v,z)$, where $U(q_\infty)\neq0$. We obtain
\[
 \frac{d\widetilde Q}{\widetilde Q}
 -\lambda\frac{d\widetilde P}{\widetilde P}
 =\frac{dv}{v}+d\psi,
\]
where $\psi$ is holomorphic. Hence
\[
 \omega_0(u,v,z)=\frac{dv}{v}-(q-\lambda p)\frac{du}{u}+d\psi.
\]
After replacing $v$ by $ve^{\psi}$, we may assume
\[
 \omega_0=\frac{dv}{v}-(q-\lambda p)\frac{du}{u}.
\]
Since $\lambda\notin\mathbb R$, $q-\lambda p\notin\mathbb R$; the
integer-index estimate is also inherited by the pair
$(q-\lambda p,1)$, because
\[
 i(q-\lambda p)+j=(-ip)\lambda+(iq+j),
\]
and the norm of the integer coefficient vector on the right is bounded by
a constant multiple of $|i|+|j|$. The preceding arguments therefore show
that $h$ extends to a neighborhood of $q_\infty$.

We have thus obtained a meromorphic function $h$ on a neighborhood of
$\overline{(Q=0)}\setminus Z$ in $\mathbb{CP}^m$ such that
\[
 \left(\eta-\alpha\frac{dP}{P}-\beta\frac{dQ}{Q}\right)\wedge\omega_0
 =dh\wedge\omega_0.
\]
It remains to extend $h$ across $Z$. Let $r\in Z$. Since the projective
divisor has simple normal crossings, there are local coordinates
$(u,v,w,z)$ centered at $r$, where $z$ denotes the remaining coordinates,
such that
\[
 H_\infty=(u=0),\qquad
 \overline{(Q=0)}=(v=0),\qquad
 \overline{(P=0)}=(w=0).
\]
In these coordinates, $Z=(u=v=w=0)$ and, inside
$\overline{(Q=0)}=(v=0)$, it is given by $(u=w=0)$. Choose sufficiently
small polydiscs in the variables $(u,w)$ and $(v,z)$. The neighborhood of
$\overline{(Q=0)}\setminus Z$ on which $h$ has already been constructed
contains, after shrinking the parameter polydisc, a product Hartogs figure:
its shell lies in the two variables $(u,w)$, while $(v,z)$ are parameters.
The parameterized Hartogs extension theorem for meromorphic functions
therefore extends $h$ meromorphically to a full neighborhood of $r$.
On the original Hartogs figure we have
\[
 dh\wedge\omega_0
 =\left(\eta-\alpha\frac{dP}{P}
 -\beta\frac{dQ}{Q}\right)\wedge\omega_0.
\]
Both sides are meromorphic on the full neighborhood, so the same identity
holds there by the meromorphic identity principle.
Since $r\in Z$ was arbitrary, $h$ is meromorphic on a neighborhood
$\widetilde W$ of the whole projective hypersurface
$\overline{(Q=0)}$.

By the Levi extension theorem~\cite[Chapter~1, p.~13]{siu}, in the form used by
Camacho--Lins Neto--Sad~\cite[Lemma~5, Section~3, p.~437]{CamachoLinsNetoSad},
$h$ extends meromorphically to $\mathbb{CP}^m$; this extension is rational.
On the dense open set where $\omega_0$ is regular, the identity above
shows that the meromorphic $1$-form
\[
 \eta-\alpha\frac{dP}{P}-\beta\frac{dQ}{Q}-dh
\]
is proportional to $\omega_0$. The proportionality factor extends
meromorphically to $\mathbb{CP}^m$ and is therefore a rational function
$a$. Restricting to the affine part gives
\[
 \eta=a\omega_0+dh+
 \alpha\frac{dP}{P}+\beta\frac{dQ}{Q},
\]
which proves \textup{(2)}.
\end{proof}

\begin{lemma}\label{lem:log-rigidity}
Let
\[
 \omega_0=\frac{dy}{y}-\lambda\frac{dx}{x},\qquad
 \lambda\in\C\setminus\mathbb Q,
\]
let $\alpha,\beta\in\C$, and let
$H\in\mathcal M(U\setminus\{x=0\})$, where
$0\in U\subset\C^2$ is a bidisk centered at the origin. Assume that the
polar divisor of $H$ in $U\setminus\{x=0\}$ is supported on $(y=0)$ and
has finite order there. Assume also that
\begin{equation}
 \label{eq:log-rigidity}
 \omega_0\wedge\left(\alpha\frac{dx}{x}+\beta\frac{dy}{y}\right)
 =\omega_0\wedge dH
 \quad\text{in }U\setminus\{x=0\}.
\end{equation}
Then $H$ is constant in $U\setminus\{x=0\}$.
\end{lemma}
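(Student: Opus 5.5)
The plan is to convert the differential identity \eqref{eq:log-rigidity} into a linear first-order PDE for $H$, and then solve it coefficient by coefficient in a Laurent expansion adapted to the domain $U\setminus\{x=0\}$. This is the same mechanism as in the examples of Section~2 and in the proof of Theorem~A.

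\emph{Step 1: reduce to a scalar equation.} Write $dH=H_x\,dx+H_y\,dy$ and expand both sides of \eqref{eq:log-rigidity} as multiples of $dx\wedge dy$. A direct computation gives
\[
 \omega_0\wedge\Bigl(\alpha\frac{dx}{x}+\beta\frac{dy}{y}\Bigr)=-(\alpha+\lambda\beta)\frac{dx\wedge dy}{xy},
 \qquad
 \omega_0\wedge dH=-\Bigl(\frac{H_x}{y}+\lambda\frac{H_y}{x}\Bigr)dx\wedge dy .
\]
Multiplying by $-xy$, the identity becomes
\[
 xH_x+\lambda yH_y=\alpha+\lambda\beta
\]
on $U\setminus\{x=0\}$, away from $(y=0)$. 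Both sides are meromorphic, so the equation holds as an identity of meromorphic functions there.

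\emph{Step 2: Laurent expansion.} Write $U=\Delta_{r_1}\times\Delta_{r_2}$. By hypothesis the polar divisor of $H$ is supported on $(y=0)$ with finite order, so there is $N\ge0$ such that $y^NH$ is holomorphic on $\Delta^*_{r_1}\times\Delta_{r_2}$. I expect this to be the only delicate point: one must read ``finite order'' as a uniform bound on $U\setminus\{x=0\}$. If only local bounds are available, I would shrink $U$ slightly, use the connectedness of the smooth curve $(y=0)\cap(U\setminus\{x=0\})$, and then invoke the identity principle at the end.

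On the product of a punctured disc with a disc we then have a convergent expansion
\[
 y^NH=\sum_{i\in\mathbb Z,\ k\ge0}g_{ik}x^iy^k,
\]
Laurent in $x$ and Taylor in $y$. Hence
\[
 H=\sum_{i\in\mathbb Z,\ j\ge -N}h_{ij}x^iy^j .
\]
This expansion may be differentiated termwise, and the Euler-type operator acts diagonally on monomials:
\[
 (x\partial_x+\lambda y\partial_y)\,x^iy^j=(i+\lambda j)\,x^iy^j .
\]

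\emph{Step 3: compare coefficients.} Step~1 now reads
\[
 \sum_{i,j}(i+\lambda j)h_{ij}x^iy^j=\alpha+\lambda\beta .
\]
By uniqueness of Laurent coefficients, the $(0,0)$ coefficient gives $\alpha+\lambda\beta=0$. For every $(i,j)\neq(0,0)$ it gives $(i+\lambda j)h_{ij}=0$. Since $\lambda\notin\mathbb Q$, we have $i+\lambda j\neq0$ for $(i,j)\in\mathbb Z^2\setminus\{(0,0)\}$, so $h_{ij}=0$ for all such indices. Therefore $H=h_{00}$ is constant on $U\setminus\{x=0\}$, which is the claim.

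No Diophantine estimate is needed here, because only vanishing of coefficients is involved, not convergence. As a by-product, the compatibility condition $\alpha+\lambda\beta=0$ follows. This is the form in which the lemma is used in the proof of Theorem~E, where one concludes that $h-h_1$ is locally constant.
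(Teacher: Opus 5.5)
Your proof is correct and follows essentially the same route as the paper: you reduce the identity to $xH_x+\lambda yH_y=\alpha+\lambda\beta$, expand $H$ as a Laurent series (Laurent in $x$, bounded below in $y$), and use $i+\lambda j\neq0$ for $(i,j)\neq(0,0)$ to kill every nonconstant coefficient, which also yields $\alpha+\lambda\beta=0$. The only difference is that the paper first shows separately that the sufficiently negative $x$-exponents vanish, so that $H$ is meromorphic on all of $U$, before comparing coefficients; your direct comparison makes this step redundant, and your observation that the polar divisor along the connected curve $(y=0)\cap(U\setminus\{x=0\})$ has a single multiplicity correctly settles the uniformity of the pole order.
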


\begin{proof}
Write the Laurent series
\[
 H(x,y)=\sum_{\substack{i\in\mathbb Z\\j\geq j_0}}H_{ij}x^iy^j.
\]
We first claim that $H$ is meromorphic in $U$. Indeed,
\begin{align*}
 xy\,\omega_0\wedge dH
 &=(x\,dy-\lambda y\,dx)\wedge(H_x\,dx+H_y\,dy)\\
 &=-[xH_x+\lambda yH_y]\,dx\wedge dy\\
 &=-\left[\sum(i+\lambda j)H_{ij}x^iy^j\right]dx\wedge dy.
\end{align*}
By~\eqref{eq:log-rigidity}, $xy\,\omega_0\wedge dH$ is meromorphic in $U$. Hence
$(i+\lambda j)H_{ij}=0$ for the sufficiently negative terms. Since
$\lambda\notin\mathbb Q$, the corresponding $H_{ij}$ vanish, and $H$ is
meromorphic in $U$.

We now claim that $H$ is constant. Multiplying~\eqref{eq:log-rigidity} by $xy$ gives
\[
 -(\alpha+\lambda\beta)
 =-\sum_{\substack{i\geq i_0\\j\geq j_0}}
 (i+\lambda j)H_{ij}x^iy^j.
\]
The constant coefficient on the right is zero. Hence the assumed identity
first gives $\alpha+\lambda\beta=0$. It then gives $H_{ij}=0$ for every
$(i,j)\neq(0,0)$, and $H$ is constant.
\end{proof}

\begin{lemma}
\label{lemma:simplyconnectedreduced}
Let $P,Q\in\C[z_1,z_2]$ be homogeneous irreducible polynomials such
that:
\begin{enumerate}[label=\textup{(\roman*)}]
\item the curves $(P=0),(Q=0)\subset\C^2$ are smooth and transverse;
\item $\lambda\in\C\setminus\mathbb Q$ satisfies a diophantine
condition.
\end{enumerate}
Then the affine open leaves of the foliation $\mathcal F_{\omega_0}$,
where
\[
 \omega_0=\frac{dQ}{Q}-\lambda\frac{dP}{P},
\]
are simply connected in $\C^2$.
\end{lemma}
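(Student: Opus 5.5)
Since $\C$ is algebraically closed, every homogeneous polynomial in two variables factors into linear forms. The first step is therefore to observe that irreducibility forces $P$ and $Q$ to be linear forms. Hypothesis (i), transversality of $(P=0)$ and $(Q=0)$, then says that these two linear forms are linearly independent. After a linear change of coordinates we may take $P=x$ and $Q=y$, so that
\[
 \omega_0=\frac{dy}{y}-\lambda\frac{dx}{x}.
\]
This reduces the statement to the explicit linear logarithmic foliation on $\C^2$. The open leaves are exactly the leaves contained in $(\C^*)^2$.

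The second step is to parametrize the open leaves explicitly. For $c\in\C^*$ consider the map
\[
 \phi_c:\C\to(\C^*)^2,\qquad \phi_c(s)=(e^s,\,c\,e^{\lambda s}).
\]
The pullback $\phi_c^*\omega_0=\lambda\,ds-\lambda\,ds=0$ vanishes, and $\phi_c'(s)\neq0$, so $\phi_c$ is a holomorphic immersion tangent to $\mathcal F_{\omega_0}$. Every point $(x_0,y_0)\in(\C^*)^2$ lies in the image of some $\phi_c$: choose $s_0$ with $e^{s_0}=x_0$ and put $c=y_0e^{-\lambda s_0}$. Hence every open leaf is of the form $L=\phi_c(\C)$.

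The third step is to show that $\phi_c$ is injective. Suppose $\phi_c(s)=\phi_c(s')$. Then $s-s'=2\pi i k$ for some $k\in\mathbb Z$, and also $\lambda(s-s')\in2\pi i\mathbb Z$, so $\lambda k\in\mathbb Z$. Since $\lambda\notin\mathbb Q$, this forces $k=0$, that is $s=s'$. This is the only place where the hypothesis on $\lambda$ enters; the diophantine condition itself is not needed.

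The last step, which is the only delicate point, is to show that $\phi_c:\C\to L$ is a homeomorphism onto $L$ for the intrinsic leaf topology, and not merely for the ambient topology. The plan is to argue that $\phi_c$ is a local biholomorphism onto plaques of $L$, since it is a leafwise immersion between one-dimensional manifolds. It also has the path-lifting property: any leafwise path can be lifted by integrating the nowhere-vanishing vector field $x\partial_x+\lambda y\partial_y$, which is tangent to $\mathcal F_{\omega_0}$ and satisfies $\phi_c'(s)=(x\partial_x+\lambda y\partial_y)(\phi_c(s))$. Hence $\phi_c$ is a covering map of $L$. Being an injective covering, it is a biholomorphism, so $L\simeq\C$ and $L$ is simply connected.
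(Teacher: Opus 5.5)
Your proof is correct, but you globalize differently from the paper. The paper does the same linear normalization $P=x$, $Q=y$ and the same injectivity computation for $\Phi(s)=(x_0e^s,y_0e^{\lambda s})$. It then first proves a local claim. In a small bidisc $U$ the parameter domain $\Phi^{-1}(U)=\{\operatorname{Re}s<c_1,\ \operatorname{Re}(\lambda s)<c_2\}$ is an intersection of half-planes, hence convex, so the leaves of $\mathcal F_{\omega_0}|_U$ are simply connected. It then uses the dilation invariance $\delta_t^*\omega_0=\omega_0$: a loop in a global leaf is shrunk into $U$, contracted there, and dilated back.

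You instead work with the global orbit map $\phi_c:\C\to L$ of the complete vector field $x\partial_x+\lambda y\partial_y$. You show directly that it is a biholomorphism onto $L$ in the leaf topology. This gives the stronger conclusion $L\simeq\C$ and needs no dilation argument. It also treats explicitly the leaf-topology issue that the paper passes over when it says the local leaves are simply connected. Your last step can be shortened slightly: once path lifting shows that $\phi_c(\C)$ is all of $L$, a continuous bijective local homeomorphism is already a homeomorphism.

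What the paper's route buys is the local statement itself. The sentence following Theorem~C cites this proof to guarantee simple connectivity for germs $f_1,f_2$ in general position at $0\in\C^2$. There what matters is the leaves inside a small neighborhood. Global simple connectivity of $L$ does not by itself imply that the components of $L\cap U$ are simply connected. Your parametrization recovers this with one more line, since $\phi_c^{-1}(U)$ is the same intersection of half-planes. Both arguments use only $\lambda\notin\mathbb Q$; neither needs the diophantine condition.
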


\begin{proof}
An irreducible homogeneous polynomial in two variables over $\C$ is linear.
Since $(P=0)$ and $(Q=0)$ are transverse, a linear change of coordinates
reduces the foliation to
\[
 \omega_0=\frac{dy}{y}-\lambda\frac{dx}{x}.
\]
It is enough to prove the following local claim and then use dilation.

\emph{Claim.} For a sufficiently small neighborhood $U$ of
$0\in\C^2$, the leaves of $\mathcal F_{\omega_0}|_U$ are
simply connected.

The open leaf through $(x_0,y_0)\in(\C^*)^2$ is parametrized by
\[
 \Phi(s)=(x_0e^s,y_0e^{\lambda s}).
\]
If $\Phi(s)=\Phi(s')$, then $s-s'=2\pi i k$ and
$\lambda(s-s')=2\pi i\ell$ for some $k,\ell\in\mathbb Z$. Since
$\lambda\notin\mathbb Q$, this forces $k=\ell=0$. Thus $\Phi$ is injective.
In a sufficiently small bidisc the parameter domain is the intersection of
the half-planes
\[
 \operatorname{Re}s<c_1,\qquad
 \operatorname{Re}(\lambda s)<c_2,
\]
and is therefore convex. The local open leaves are simply connected.

Finally, $\delta_t^*\omega_0=\omega_0$ and
$\delta_t(L_z)=L_{tz}$. Given a loop in a global open leaf, dilate it into
the small bidisc, contract it in the corresponding local leaf, and apply
the inverse dilation. This proves the global assertion.
\end{proof}

\section{\texorpdfstring{Relative cohomology for germs of logarithmic
$1$-forms}{Relative cohomology for germs of logarithmic 1-forms}}

We fix some terminology used in this section. A \emph{meromorphic first
integral} of a foliation defined by $\omega$ is a meromorphic function $H$
such that
\[
 dH\wedge\omega=0.
\]
Given a meromorphic $1$-form $\Theta$, a \emph{relative primitive} is a
meromorphic function $h$ satisfying
\[
 (\Theta-dh)\wedge\omega=0;
\]
equivalently, $\Theta=a\omega+dh$ for a meromorphic function $a$. In
normal-crossings coordinates, a Laurent monomial $x^k$ is called
\emph{resonant} when its exponent vector is parallel to the residue vector,
that is, when
\[
 k_i\lambda_j-k_j\lambda_i=0
 \qquad\text{for all }i,j.
\]
Finally, the \emph{logarithmic obstruction} is the collection of residue
coefficients of $\Theta$ along the irreducible components of the polar
divisor which cannot be absorbed into a multiple of $\omega$ or an exact
meromorphic term.

\begin{lemma}[Vanishing of the additive holonomy cocycle]
Let $\mathcal G$ be a finitely generated group of germs of transverse
holonomy maps which, in a coordinate $z$, are simultaneously of the form
\[
 g(z)=\mu_gz,
 \qquad \mu_g\in\C^*.
\]
Assume either that $\mathcal G$ is finite, or that it contains an element
$g_0(z)=\mu z$ with $\mu$ not a root of unity. In the latter case, if
$|\mu|=1$, assume that there are $C,\tau>0$ such that
\[
 |\mu^n-1|\geq\frac{C}{|n|^\tau},
 \qquad n\in\mathbb Z\setminus\{0\}.
\]
Let $c_g$ be meromorphic germs of uniformly finite principal part which
satisfy
\[
 c_{g\circ f}=c_f+c_g\circ f,
\]
and whose constant Laurent coefficient is zero.
Then there is a meromorphic germ $b$ such that
\[
 c_g=b\circ g-b,
 \qquad g\in\mathcal G.
\]
If all the $c_g$ are holomorphic, then $b$ is holomorphic.
\end{lemma}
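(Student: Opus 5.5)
The plan is to work coefficientwise in the Laurent expansion in the linearizing coordinate $z$, and to treat the finite and the infinite case separately. In the finite case no small divisors appear, so the coboundary can be written down by averaging. In the infinite case the element $g_0$ alone determines $b$, and commutativity of the linear group forces the cocycle relation to hold for all other elements. The only analytic issue is convergence, controlled by the Diophantine hypothesis on $\mu$.

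\emph{Finite case.} Put $N=|\mathcal G|$ and define
\[
 b:=-\frac1N\sum_{f\in\mathcal G}c_f .
\]
The cocycle identity applied to the composition $f\circ g$ gives $c_f\circ g=c_{f\circ g}-c_g$. Summing over $f$ and reindexing via $f\mapsto f\circ g$, we get
\[
 \sum_f c_f\circ g=\sum_f c_f-Nc_g .
\]
Hence $b\circ g-b=c_g$. Since $b$ is a finite sum of meromorphic germs of uniformly bounded principal part, it is meromorphic, and it is holomorphic when all the $c_g$ are.

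\emph{Infinite case.} Write $c_g(z)=\sum_{k\geq -N}c_{g,k}z^k$ with $c_{g,0}=0$, and seek $b=\sum_k b_kz^k$. Then
\[
 b\circ g-b=\sum_k b_k(\mu_g^k-1)z^k .
\]
Since the maps are linear, they commute, so $c_{g\circ f}=c_{f\circ g}$. The cocycle identity then reads coefficientwise as
\[
 c_{f,k}(1-\mu_g^k)=c_{g,k}(1-\mu_f^k).
\]
Because $\mu$ is not a root of unity, $\mu^k\neq1$ for $k\neq0$, so we may set
\[
 b_k:=\frac{c_{g_0,k}}{\mu^k-1}\quad (k\neq0),\qquad b_0:=0 .
\]
The displayed symmetry with $g=g_0$ gives $c_{f,k}=b_k(\mu_f^k-1)$ for every $f\in\mathcal G$ and every $k\neq0$. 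For $k=0$ both sides vanish by the hypothesis on the constant coefficients. So $b$ solves the equation formally for the whole group, and $b_k=0$ whenever $c_{g_0,k}=0$. Consequently $b$ has principal part no worse than that of $c_{g_0}$, and it has no negative powers if $c_{g_0}$ is holomorphic.

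The main obstacle is convergence of $b$, which I would handle via a lower bound on $|\mu^k-1|$. If $|\mu|\neq1$, replace $\mu$ by $\mu^{-1}$ if needed (this only changes the sign of $k$). Then
\[
 |\mu^k-1|\geq\bigl||\mu|^{|k|}-1\bigr|\geq\min\bigl(|\mu|-1,\,1-|\mu|^{-1}\bigr)>0
\]
uniformly in $k\neq0$, so $|b_k|\leq C'|c_{g_0,k}|$. If $|\mu|=1$, the assumed estimate gives $|b_k|\leq C^{-1}|k|^\tau|c_{g_0,k}|$. In both cases Cauchy--Hadamard shows that the regular part of $b$ converges on the same punctured disc as $c_{g_0}$, since polynomial factors do not change the radius. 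The finitely many negative-index terms pose no issue. This yields the meromorphic germ $b$, holomorphic in the holomorphic case.
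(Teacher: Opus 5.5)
Your proof is correct and is essentially the paper's. In the finite case you average, $b=-\frac{1}{|\mathcal G|}\sum_{f\in\mathcal G}c_f$. In the infinite case you divide the Laurent coefficients of $c_{g_0}$ by $\mu^k-1$ and use commutativity of the linear maps to pass to all of $\mathcal G$; your coefficientwise identity $c_{f,k}(1-\mu_g^k)=c_{g,k}(1-\mu_f^k)$ is the same computation as the paper's observation that $d_g:=c_g-(b\circ g-b)$ satisfies $d_g\circ g_0=d_g$ and therefore vanishes.

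The only slip is the intermediate bound $|\mu^k-1|\geq\bigl||\mu|^{|k|}-1\bigr|$, which fails for negative $k$ (e.g.\ $\mu=2$, $k=-1$). It is harmless: only $k\geq1$ matters for convergence, and there $|\mu^k-1|\geq\bigl||\mu|^k-1\bigr|\geq\bigl||\mu|-1\bigr|$, so you do not need to replace $\mu$ by $\mu^{-1}$.
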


\begin{proof}
If $\mathcal G$ is finite, put
\[
 b:=-\frac1{|\mathcal G|}\sum_{f\in\mathcal G}c_f.
\]
The cocycle identity and reindexing the finite sum give
$b\circ g-b=c_g$ for every $g\in\mathcal G$.

Assume now that $\mathcal G$ is infinite and choose $g_0$ as in the
statement. Write
\[
 c_{g_0}(z)=\sum_{n\geq-N}c_nz^n.
\]
By hypothesis, $c_0=0$. Define
\[
 b(z)=\sum_{\substack{n\geq-N\\n\neq0}}
 \frac{c_n}{\mu^n-1}z^n.
\]
If $|\mu|\neq1$, the denominators are bounded away from zero for one of
the two tails and grow exponentially for the other; since the negative
tail is finite, the series converges. If $|\mu|=1$, the stated estimate
allows only a polynomial loss in the coefficients, and the series again
converges. Thus $b$ is meromorphic, with no pole when $c_{g_0}$ is
holomorphic, and
\[
 c_{g_0}=b\circ g_0-b.
\]

For $g\in\mathcal G$, put
\[
 d_g:=c_g-(b\circ g-b).
\]
The family $(d_g)$ is again an additive cocycle and $d_{g_0}=0$. Since the
multiplicative maps commute, the cocycle identity applied to
$g\circ g_0=g_0\circ g$ gives
\[
 d_g\circ g_0=d_g.
\]
A meromorphic germ invariant under $z\mapsto\mu z$, with $\mu$ not a root
of unity, is constant. Its constant Laurent coefficient is zero, so
$d_g=0$. Hence
$c_g=b\circ g-b$ for every $g\in\mathcal G$.
\end{proof}

\begin{lemma}[Leafwise gluing near a resolved divisor]
Let $\mathcal F$ be a non-dicritical foliation germ at $0\in\C^2$ whose
reduction contains neither saddle-nodes nor nodal singularities, and let
\[
 \pi:(M,E)\longrightarrow(\C^2,0)
\]
be a reduction with total invariant divisor $\widetilde D$. Let
$\widetilde\omega$ define the lifted foliation and let $\Theta$ be a
meromorphic $1$-form with finite polar order along $\widetilde D$. Assume
that:
\begin{enumerate}[label=\textup{(\roman*)}]
 \item $d\Theta\wedge\widetilde\omega=0$;
 \item every open leaf in a sufficiently small punctured neighborhood is
 simply connected;
 \item at every point of $\widetilde D$ there are local meromorphic
 functions $a_V,h_V$, of finite pole order, such that
 \[
 \Theta|_V=a_V\widetilde\omega+dh_V.
 \]
 \item on a transverse disk to each invariant component, the holonomy is
 either finite or simultaneously linearizable with multipliers satisfying
 the estimate in the additive holonomy lemma, and the additive cocycle
 determined by the local primitives has zero constant Laurent coefficient.
\end{enumerate}
Then there are meromorphic functions $\widetilde a,\widetilde h$, defined
on a neighborhood of $E$, such that
\[
 \Theta=\widetilde a\widetilde\omega+d\widetilde h.
\]
\end{lemma}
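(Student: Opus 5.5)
The plan is to glue the local decompositions in (iii) in two stages. First along each invariant component of $\widetilde D$, using leafwise continuation of primitives. Then across $E$, using the additive holonomy lemma to kill the obstruction to global consistency. The gluing is set up as a Čech-type problem. On overlaps $V\cap V'$ we have $(a_V-a_{V'})\widetilde\omega = d(h_{V'}-h_V)$. Hence $h_V-h_{V'}$ is a meromorphic first integral of $\widetilde{\mathcal F}$ on $V\cap V'$, with finite pole order along $\widetilde D$.

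\textbf{Step 1: near a regular point of a component.} Near a regular point of a component $C\subset\widetilde D$, take a flow box in which $C=(u=0)$ and $u$ is a first integral. There $h_V-h_{V'}=\varphi(u)$ for a one-variable meromorphic germ $\varphi$, as in the uniqueness argument in the proof of Theorem~B. Fix a transverse disk $\Sigma_C$ at a regular point $p_C\in C$ and choose a reference primitive $h_0$ near $p_C$. Continue $h_0$ along leaves exactly as in the proof of Theorem~E: for $w$ on a leaf $L$ through $z\in\Sigma_C$, set
\[
\widetilde h(w)=h_0(z)+\int_{\gamma_{z,w}}\Theta .
\]
Independence of the path follows from (ii) together with $d\Theta\wedge\widetilde\omega=0$. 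The non-nodal saturation lemma (Lemma~\ref{lemma:nonnodalsaturation}) guarantees that leaves from $\Sigma_C$ fill a neighborhood of each singular point on $C$, and of $E$ after inducting over the components.

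\textbf{Step 2: the holonomy obstruction.} The real issue is that a leaf may return to $\Sigma_C$ at several points $z,g(z)$, with $g$ in the holonomy group $\mathcal G_C$. Then the continued value is well defined only if
\[
h_0(g(z))-h_0(z)=\int_{\gamma}\Theta=:c_g(z).
\]
I would show that $g\mapsto c_g$, defined as the defect between $h_0\circ g$ and the continuation of $h_0$ along the lift of the loop, is an additive cocycle: $c_{g\circ f}=c_f+c_g\circ f$. Each $c_g$ is meromorphic with uniformly finite principal part, because the local $h_V$ have finite pole order and finitely many charts cover a compact part of $C$. By (iv), the holonomy is finite or simultaneously linear with the Diophantine estimate, and the constant coefficient vanishes. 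The additive holonomy lemma then gives a meromorphic $b$ with $c_g=b\circ g-b$. Replacing $h_0$ by $h_0-b$ (or $+b$, depending on the sign convention) makes the cocycle trivial, so the continuation becomes single valued on the saturation of $\Sigma_C$.

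\textbf{Step 3: adjacent components and extension.} Along a corner $C\cap C'$, the two primitives differ by a first integral. At a reduced non-nodal corner this first integral is constant, by the Laurent argument of Lemma~\ref{lem:log-rigidity} (residue ratio irrational, no nodes). Adjusting by constants, and proceeding component by component along the tree $E$ (no cycles, so no further monodromy), yields a single $\widetilde h$ on a neighborhood of $E$ minus $\widetilde D$. Meromorphy across $\widetilde D$ follows as in Theorem~E: compare $\widetilde h$ with $h_V$ in each chart. Their difference is a first integral with finite principal part, hence meromorphic, by Laurent coefficients with the Diophantine bound at corners and trivially at regular points. Finally set $\widetilde a=(\Theta-d\widetilde h)/\widetilde\omega$, which is meromorphic.

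The main obstacle is Step 2: making the cocycle precise and verifying that its pole orders are uniformly bounded, so that the additive holonomy lemma applies.
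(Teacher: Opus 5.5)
\textbf{Gap in Step~3.} Your Steps~1--2 follow the paper's mechanism: leafwise continuation of a local primitive from a transversal, the additive cocycle $c_g$ killed by the additive holonomy lemma, path independence from (ii), and $\widetilde a=(\Theta-d\widetilde h)/\widetilde\omega$ at the end. The second stage of your gluing, however, rests on a claim that fails under the stated hypotheses. You assert that at a reduced non-nodal corner the difference of two primitives is a constant, invoking Lemma~\ref{lem:log-rigidity} with an irrational residue ratio.

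Nothing in the lemma gives you irrationality. The hypotheses only exclude saddle-nodes and eigenvalue ratios in $\mathbb R_{>0}$, so resonant saddles (eigenvalue ratio in $\mathbb Q_{<0}$) are allowed. For a general $\widetilde\omega$ there is not even a linear logarithmic model at the corner.

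Resonant corners already occur in the situation of Theorem~C. Take $f_1=y$, $f_2=y-x^2$. The second blow-up produces a corner $E_1\cap E_2$ with $m_{E_1}=(1,1)$ and $m_{E_2}=(2,2)$. In suitable coordinates $E_1=(u=0)$, $E_2=(v=0)$ one has $\widetilde\omega_0=\rho_{E_1}\,d(uv^2)/(uv^2)$, with $\rho_{E_1}=\lambda_1+\lambda_2\neq0$. Here $uv^2$ is a holomorphic first integral, so the difference of your two primitives on the corner chart can be any $\varphi(uv^2)$. For instance, $E_1$ has trivial holonomy, so your initial choice on $\Sigma_{E_1}$ can be altered by an arbitrary function of the transverse coordinate. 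Lemma~\ref{lem:log-rigidity} does not apply there, since its parameter is rational. Hence ``adjusting by constants along the tree'' does not produce a single $\widetilde h$.

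\textbf{How the paper avoids this.} The per-component stage is unnecessary. As you yourself note in Step~1, the non-nodal saturation lemma makes the saturation of one transversal $\Sigma$ a punctured neighborhood of all of $E$. The paper therefore:
\begin{enumerate}
 \item defines $c_g$ for every return of a leaf to this single $\Sigma$, including returns passing through other components, not only the holonomy of the component through $\Sigma$;
 \item replaces $h_\Sigma$ by $h_\Sigma-b$;
 \item continues through the whole saturation.
\end{enumerate}
Simple connectivity then yields one single-valued $h^\circ$, with no inter-component gluing at all.

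At a corner only meromorphic extension remains. The function $h^\circ-h_V$ is a single-valued first integral on the punctured chart, and its Laurent expansion contains only resonant monomials. Its principal part is finite because $h^\circ$ is already meromorphic on an incoming flow box at a regular point of one branch. This replaces your constancy claim, and it is precisely where resonant corners are handled.
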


\begin{proof}
Choose a separatrix $\Lambda$ and a small transverse disk $\Sigma$ meeting
$\Lambda$ at a regular point. A local decomposition supplies a meromorphic
primitive $h_\Sigma$ on a flow box containing $\Sigma$. Since the reduction
has no nodal separator, the non-nodal saturation lemma (Lemma~\ref{lemma:nonnodalsaturation}) shows that the
saturation of this flow box contains a punctured neighborhood of the
origin; after lifting, it contains a punctured neighborhood of the total
exceptional divisor.

For a holonomy germ $g$ represented by a family of leafwise paths
$\gamma_{g,z}$ from $z$ to $g(z)$, set
\[
 c_g(z):=h_\Sigma(g(z))-h_\Sigma(z)
 -\int_{\gamma_{g,z}}\Theta.
\]
The family $(c_g)$ satisfies
\[
 c_{g\circ f}=c_f+c_g\circ f.
\]
By \textup{(iv)} and the additive holonomy lemma there is a meromorphic
germ $b$ such that $c_g=b\circ g-b$. Replacing $h_\Sigma$ by
$h_\Sigma-b$ makes every $c_g$ vanish. Thus the initial primitive is
compatible with all returns of the holonomy pseudogroup.

Continue $h_\Sigma$ through this saturation by integrating $\Theta$ along
the leaves. If two chains of flow boxes reach the same plaque, their
concatenation gives a loop in the corresponding open leaf. The restriction
of $\Theta$ to that leaf is closed, and the leaf is simply connected, so
the two continuations agree. The parameterized monodromy argument in flow
boxes also shows that the resulting function $h^\circ$ is holomorphic off
$\widetilde D$ and satisfies
\[
 dh^\circ\wedge\widetilde\omega
 =\Theta\wedge\widetilde\omega.
\]

Near a smooth point of $\widetilde D$, the meromorphic flow-box lemma
extends $h^\circ$ with finite pole order. Near a corner, compare
$h^\circ$ with a local meromorphic primitive $h_V$ supplied by
\textup{(iii)}. Their difference is a single-valued first integral on the
punctured chart. Its germ on the incoming saturated flow box is
meromorphic; analytic continuation through the non-nodal saturation, or
equivalently its resonant Laurent expansion, shows that it extends
meromorphically across the corner. Hence $h^\circ$ extends to a meromorphic
function $\widetilde h$ on a neighborhood of $E$.

The form $\Theta-d\widetilde h$ annihilates the tangent line field of the
lifted foliation. It is therefore a meromorphic multiple
$\widetilde a\widetilde\omega$. This proves the lemma.
\end{proof}

\begin{proof}[Proof of Theorem C]
Let
\[
 \pi:(M,E)\longrightarrow(U,0)
\]
be an embedded resolution of the reduced divisor $D$, chosen simultaneously
with the reduction of $\mathcal F_{\omega_0}$. Denote by
$\widetilde D=\pi^{-1}(D)$ the total transform and put
\[
 \widetilde\omega_0=\pi^*\omega_0,
 \qquad \widetilde\eta=\pi^*\eta.
\]
The divisor $\widetilde D$ has normal crossings and its exceptional dual
graph is a tree.

If $C$ is an irreducible component of $\widetilde D$, then the residue of
$\widetilde\omega_0$ along $C$ is
\[
 \rho_C=\sum_{j=1}^n m_{Cj}\lambda_j,
 \qquad m_{Cj}=\operatorname{ord}_C(f_j\circ\pi)\in\mathbb Z_{\geq0}.
\]
The vector $(m_{C1},\ldots,m_{Cn})$ is nonzero. Hence $\rho_C\neq0$ by
$\mathbb Z$-linear independence. At a corner $C\cap C'$, suitable local
coordinates $(x,y)$ give
\[
 \widetilde\omega_0
 =\rho_C\frac{dx}{x}+\rho_{C'}\frac{dy}{y}+d\varphi,
\]
where $\varphi$ is holomorphic. Absorbing $d\varphi$ into one of the
coordinates reduces the local equation to a linear logarithmic form.
In particular, the lifted foliation has no saddle-node at a corner; at a
smooth point of the total transform it is regular because $\rho_C\neq0$.
Every nonzero divisor occurring in its Laurent coefficient equations has
the form
\[
 p\rho_{C'}-q\rho_C
 =\sum_{j=1}^n(pm_{C'j}-qm_{Cj})\lambda_j,
 \qquad (p,q)\in\mathbb Z^2.
\]
The integer-index diophantine condition therefore gives the required
polynomial lower bound. If this combination is zero, the corresponding
Laurent monomial is a meromorphic first integral; we normalize its
coefficient in the primitive to be zero. Thus the local coefficient
calculation gives convergent meromorphic solutions and finite principal
parts at every corner. The nonsingular meromorphic decomposition lemma
gives the same conclusion at regular points of $\widetilde D$.

We next fix the logarithmic obstruction. At a generic smooth point of the
strict transform of $(f_j=0)$, the local two-dimensional calculation gives
a uniquely determined coefficient $\mu_j\in\mathbb C$ of
$d(f_j\circ\pi)/(f_j\circ\pi)$, modulo a multiple of
$\widetilde\omega_0$. Put
\[
 \Theta=\widetilde\eta-
 \sum_{j=1}^n\mu_j\frac{d(f_j\circ\pi)}{f_j\circ\pi}.
\]
We next show that this subtraction removes the logarithmic obstruction
along every component of the total transform. Let $C$ be an irreducible
exceptional component. At a generic point of $C=(x=0)$, write
\[
 f_j\circ\pi=u_jx^{m_{Cj}},
\]
where $u_j$ is a holomorphic unit. Consequently,
\[
 \frac{d(f_j\circ\pi)}{f_j\circ\pi}
 =m_{Cj}\frac{dx}{x}+\frac{du_j}{u_j},
\]
and the logarithmic coefficient induced by the correction along $C$ is
\[
 \mu_C=\sum_{j=1}^n m_{Cj}\mu_j.
\]
Equivalently, a positively oriented meridian $\gamma_C$ about $C$ projects
in $H_1(U\setminus D,\mathbb Z)$ to
\[
 \sum_{j=1}^n m_{Cj}\gamma_j,
\]
where $\gamma_j$ is a meridian about $(f_j=0)$, and hence
\[
 \frac{1}{2\pi i}\int_{\gamma_C}
 \sum_{j=1}^n\mu_j\frac{d(f_j\circ\pi)}{f_j\circ\pi}
 =\sum_{j=1}^n m_{Cj}\mu_j=\mu_C.
\]

The logarithmic obstruction is understood modulo
$\widetilde\omega_0$: changing the local coefficient $a_V$ changes the
residue along $C$ by a multiple of
$\rho_C=\operatorname{Res}_C(\widetilde\omega_0)$. Thus the relevant
obstruction is the residue class modulo $\mathbb C\rho_C$. Since the
obstruction on an exceptional meridian is the same integral combination
of the obstructions on the original meridians, the choice of the
coefficients $\mu_j$ gives
\[
 \operatorname{Res}_C(\Theta)\in\mathbb C\rho_C
\]
for every exceptional component $C$. This remaining residue can be
absorbed into $a_V\widetilde\omega_0$.

At a smooth point of the total transform, the meromorphic flow-box
decomposition then applies. At a corner $C\cap C'$, the residue vector of
$\Theta$, after subtracting a suitable local multiple of
$\widetilde\omega_0$, is zero. The local Laurent calculation therefore
leaves only a multiple of $\widetilde\omega_0$ and an exact meromorphic
term. Consequently, on every normal-crossings chart there are meromorphic
functions $a_V,h_V$ such that
\begin{equation}
 \Theta|_V=a_V\widetilde\omega_0+dh_V.
 \label{eq:local-decomposition-five}
\end{equation}

We also verify the holonomy condition in the gluing lemma. Since
$\widetilde\omega_0$ is closed logarithmic, the holonomy along every
invariant component is simultaneously linearizable in a logarithmic
transverse coordinate. Indeed, exponentiating a primitive of the closed
logarithmic form after division by the residue gives a local transverse
coordinate; changes between such coordinates are multiplicative constants.
Thus all holonomy generators are linear in the same coordinate. At a
corner $C\cap C'$, their multipliers have the form
\[
 \exp\left(2\pi i\frac{\rho_{C'}}{\rho_C}\right).
\]
If the multiplier group is finite, the finite case of the additive
holonomy lemma applies. Otherwise it contains a multiplier $\mu$ which is
not a root of unity. When $|\mu|\neq1$ no small-divisor estimate is needed.
When $|\mu|=1$, the lower bound for $|\mu^k-1|$ follows from the
integer-index diophantine estimate. Indeed, for each $k\neq0$ choose the
nearest integer $r$ to $k\rho_{C'}/\rho_C$. Then $r=O(|k|)$ and
\[
 \left|\exp\left(2\pi i k\frac{\rho_{C'}}{\rho_C}\right)-1\right|
 \asymp \frac{|k\rho_{C'}-r\rho_C|}{|\rho_C|}.
\]
Moreover,
\[
 k\rho_{C'}-r\rho_C
 =\sum_{j=1}^n
 (k m_{C'j}-r m_{Cj})\lambda_j,
\]
and the norm of its integer coefficient vector is $O(|k|)$. The
integer-index condition therefore gives the required polynomial lower
bound for $|\mu^k-1|$.
Finally, the constant Laurent coefficient of the additive cocycle is the
logarithmic period of $\Theta$ along the corresponding limiting cycle.
The choice of the coefficients $\mu_j$ makes all these periods zero,
including those of exceptional meridians by the relation above. Thus
condition \textup{(iv)} of the gluing lemma is satisfied.

There is also a common lower Laurent bound for the cocycle. Choose a finite
normal-crossings cover of the compact exceptional divisor. On each member,
$\Theta$ and the selected local primitive have finite pole order. The
maximum of these finitely many orders gives an integer $N$ valid throughout
the cover. Linear substitutions $z\mapsto\mu z$ do not change Laurent
exponents, and leafwise integration does not create lower exponents.
Therefore every cocycle germ $c_g$ has no term $z^k$ with $k<-N$, as
required by the additive holonomy lemma.

The hypotheses of the leafwise gluing lemma are now satisfied. Condition
\textup{(iii)} is precisely the family of local decompositions
\eqref{eq:local-decomposition-five}; condition \textup{(i)} follows from
$d\Theta\wedge\widetilde\omega_0=0$; condition \textup{(ii)} is the lifted
form of the assumed simple connectivity of the open leaves. Finally, the
new non-nodal-resolution hypothesis allows the use of the saturation
lemma; the preceding paragraph verifies condition \textup{(iv)}. We
therefore obtain meromorphic functions
$\widetilde a,\widetilde h$ on a neighborhood of the whole exceptional
divisor such that
\begin{equation}
 \widetilde\eta
 =\widetilde a\widetilde\omega_0+d\widetilde h
 +\sum_{j=1}^n\mu_j
   \frac{d(f_j\circ\pi)}{f_j\circ\pi}.
 \label{eq:global-decomposition-five}
\end{equation}

Finally, a meromorphic function on a neighborhood of the exceptional set of
a proper modification represents an element of the meromorphic function
field of $(U,0)$. Thus $\widetilde a$ and $\widetilde h$ descend to
meromorphic germs $a,h\in\mathcal M_2$.
Descending~\eqref{eq:global-decomposition-five} gives
\[
 \eta=a\omega_0+dh+
 \sum_{j=1}^n\mu_j\frac{df_j}{f_j},
\]
as required.
\end{proof}

\begin{remark}
A higher-dimensional version follows from
\hyperref[thm:B]{Theorem~B} whenever there is a two-dimensional section in
general position on which the hypotheses of Theorem~C hold for the
restricted logarithmic foliation. We do not claim that version here.
\end{remark}

\section{Analytic integrable deformations of logarithmic foliation germs}
We shall now apply our techniques above to the study of deformations of integrable 1-forms. 
\subsection{Deformations: the polynomial homogeneous case}

\begin{theorem}\label{thm:six}
Let $m\geq3$ and let $P,Q$ be reduced irreducible homogeneous polynomials
of the same degree and in general projective position.
Let $\lambda\in\C$.
Given an analytic deformation
\[
 \Omega_t=\Omega_0+\sum_{j=1}^{\infty}t^j\Omega_j
\]
by homogeneous polynomial integrable $1$-forms of the same degree as
\[
 \Omega_0=P\,dQ-\lambda Q\,dP,
\]
assume that both $(P=0)$ and $(Q=0)$ are invariant by $\Omega_t$ for every
$t$. Then
$\mathcal F_t:\Omega_t=0$ is logarithmic; indeed,
\[
 \Omega_t=PQ\left(a(t)\frac{dQ}{Q}
 +b(t)\frac{dP}{P}\right)
\]
for some holomorphic germs $a(t),b(t)\in\mathcal O_1$. Moreover,
$a(0)=1$ and $b(0)=-\lambda$.
\end{theorem}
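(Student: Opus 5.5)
The plan is to apply Theorem~D fibrewise in $t$, and then recover the analytic dependence on $t$ from the analyticity of the coefficients $\Omega_j$.

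\emph{Step 1: apply Theorem~D for each fixed $t$.} Fix $t$ in a small disc. The form $\Omega_t$ is a homogeneous polynomial $1$-form of degree $\deg\Omega_0=2\deg P-1$, and by hypothesis both $(P=0)$ and $(Q=0)$ are invariant by $\Omega_t$. Theorem~D then applies directly and gives constants $a(t),b(t)\in\C$ with
\[
 \frac{\Omega_t}{PQ}=a(t)\frac{dQ}{Q}+b(t)\frac{dP}{P},
\]
that is, $\Omega_t=a(t)P\,dQ+b(t)Q\,dP$. Integrability of $\Omega_t$ is not needed here; the conclusion already forces $\mathcal F_t$ to be logarithmic.

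\emph{Step 2: the coefficients are unique.} We show $P\,dQ$ and $Q\,dP$ are linearly independent. Suppose $\alpha P\,dQ+\beta Q\,dP=0$ with $(\alpha,\beta)\neq(0,0)$. Wedging with $dP$ gives $\alpha P\,dQ\wedge dP=0$. Transversality of the projective hypersurfaces gives $dP\wedge dQ\not\equiv0$, so $\alpha=0$. The same argument with $dQ$ gives $\beta=0$. Hence $a(t),b(t)$ are uniquely determined.

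\emph{Step 3: holomorphy in $t$.} The space $V$ of homogeneous polynomial $1$-forms of degree $2\deg P-1$ is finite dimensional. The map $L:\C^2\to V$, $(a,b)\mapsto aP\,dQ+bQ\,dP$, is linear and injective by Step~2, so it has a linear left inverse $\Pi:V\to\C^2$. Then $(a(t),b(t))=\Pi(\Omega_t)$. The series $\Omega_t=\sum t^j\Omega_j$ converges in $V$, so $a,b$ are holomorphic germs, i.e. elements of $\mathcal O_1$. Equivalently, $a(t)=\sum a_jt^j$ and $b(t)=\sum b_jt^j$, where $\Omega_j=a_jP\,dQ+b_jQ\,dP$ comes from applying Theorem~D to each $\Omega_j$.

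\emph{Step 4: the value at $t=0$.} Since $\Omega_0=P\,dQ-\lambda Q\,dP$, uniqueness gives $a(0)=1$ and $b(0)=-\lambda$.

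\emph{Main obstacle.} The only delicate point is the termwise application in Step~3: Theorem~D must apply to each $\Omega_j$, $j\geq1$. Invariance holds for every $t$, and the condition $R\mid\Omega_t\wedge dR$ is linear and closed, so comparing coefficients of powers of $t$ gives invariance of each $\Omega_j$. The $\Omega_j$ need not be integrable, but Theorem~D does not require integrability. The rest reduces to linear algebra and the independence in Step~2.
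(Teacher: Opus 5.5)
Your proposal is correct and follows essentially the paper's route: apply Theorem~D for each fixed $t$, then deduce holomorphy of the coefficients in $t$. The paper reads $a(t),b(t)$ off as the residues of $\Omega_t/(PQ)$ along $(Q=0)$ and $(P=0)$, which is a concrete choice of your left inverse $\Pi$ and makes your Step~2 uniqueness, and hence $a(0)=1$, $b(0)=-\lambda$, immediate; note also that once you have $\Pi$, the termwise application of Theorem~D to each $\Omega_j$ that you flag as the delicate point is not actually needed.
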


\begin{proof}
For every fixed $t$, apply the homogeneous logarithmic divisor theorem to
$\Omega_t$. This gives constants $a(t),b(t)$ such that
\[
 \frac{\Omega_t}{PQ}=a(t)\frac{dQ}{Q}+b(t)\frac{dP}{P}.
\]
These constants are the residues of $\Omega_t/(PQ)$ along $(Q=0)$ and
$(P=0)$, respectively. Since $\Omega_t$ depends analytically on $t$, its
residues do as well. Hence $a(t),b(t)\in\mathcal O_1$.
\end{proof}

\begin{remark}
Without the invariance of $(PQ=0)$, the preceding logarithmic conclusion
does not follow in higher dimension merely from integrability. If $m\geq3$,
we cannot in general conclude that
\[
 \omega_j=a_j\omega_0+dh_j,
\]
because we do not necessarily have
\[
 d\omega_j\wedge\omega_0=0.
\]
\end{remark}

\subsection{Analytic deformations of simple germs}
Let us now prove Theorems~G and H. For this we shall need:
\begin{lemma}[Finite-pole relative decomposition]
\label{lem:finite-pole-decomposition}
Let $F,G\in\mathcal O_m$, $m\geq2$, be reduced irreducible germs in general
position at the origin in the sense of
Definition~\ref{def:two-component-general-position}, and put
\[
 \omega_0=\frac{dG}{G}-\lambda\frac{dF}{F}.
\]
Assume that $\lambda$ satisfies the integer-index diophantine condition.
If $m\geq3$, use the submersion theorem to choose local coordinates
$(x,y,z_3,\ldots,z_m)$ such that $F=x$ and $G=y$, and put
\[
 S=(z_3=\cdots=z_m=0).
\]
If $m=2$, put $S=(\C^2,0)$.
Let $\eta$ be a meromorphic $1$-form whose polar divisor is supported on
$(FG=0)$ and has arbitrary finite order. If
\[
 d\eta\wedge\omega_0=0,
\]
then there are meromorphic germs $a,h$, also of finite pole order along
$(FG=0)$, and constants $\alpha,\beta\in\C$ such that
\[
 \eta=a\omega_0+dh+\alpha\frac{dF}{F}
 +\beta\frac{dG}{G}.
\]
\end{lemma}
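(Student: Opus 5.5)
The plan is to reduce to the two-dimensional section $S$, solve the relative cohomology equation there by a Laurent-coefficient computation that extends the one behind Theorem~A to arbitrary finite pole order, and then lift the decomposition to $(\C^m,0)$ with Theorem~B. In the chosen coordinates $F=x$, $G=y$, so on $S$ we have $\zeta_0^*\omega_0=dy/y-\lambda\,dx/x$, with $\zeta_0(x,y)=(x,y,0,\ldots,0)$. As observed after Definition~\ref{def:section-general-position}, this embedding is in general position with respect to $(\mathcal F_{\omega_0},(FG=0))$. Since the polar divisor of $\eta$ has finite order along $(FG=0)$, there is $N\geq1$ such that $x^Ny^N\zeta_0^*\eta$ is holomorphic. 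When $m=2$ the section step is already the whole proof.

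\emph{Step 1 (section).} Write
\[
 \zeta_0^*\eta=\sum_{i,j\geq-N}a_{ij}x^iy^j\,dx+\sum_{i,j\geq-N}b_{ij}x^iy^j\,dy .
\]
As in the computation preceding the proof of Theorem~A, the equation $\theta\wedge\omega_0=dh\wedge\omega_0$ with $h=\sum h_{ij}x^iy^j$ reduces, coefficient by coefficient of $x^{i-1}y^{j-1}\,dx\wedge dy$, to
\[
 (i+\lambda j)\,h_{ij}=a_{i-1,j}+\lambda b_{i,j-1},
 \qquad i,j\geq-N .
\]
The integer-index diophantine condition implies $\lambda\notin\mathbb Q$, so the divisor $i+\lambda j$ vanishes only at $(i,j)=(0,0)$. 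The corresponding right-hand side $a_{-1,0}+\lambda b_{0,-1}$ is exactly the contribution of the logarithmic part. I therefore set $\alpha=a_{-1,0}$, $\beta=b_{0,-1}$ and $\theta_0=\zeta_0^*\eta-\alpha\,dx/x-\beta\,dy/y$. For $\theta_0$ the obstruction vanishes, and I put $h_{00}=0$.

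\emph{Step 2 (convergence).} For all other indices I define $h_{ij}$ by the formula above. The principal part is finite automatically, since only indices $i,j\geq-N$ occur. Convergence is the only genuine issue, and I expect it to be the main obstacle. Unlike in Theorem~A, negative indices now appear together with positive ones, so the positive-cone bound is insufficient. The integer-index estimate gives
\[
 |i+\lambda j|^{-1}\leq C^{-1}(|i|+|j|)^\delta
\]
for $(i,j)\neq(0,0)$, which is only a polynomial loss. Cauchy--Hadamard applied to $x^Ny^N\theta_0$ then shows that $x^Ny^Nh$ converges on a slightly smaller bidisc.

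\emph{Step 3 (the factor $a_0$).} Once $(\theta_0-dh)\wedge\omega_0=0$, I set $a_0=y\,B$, where $B$ is the $dy$-coefficient of $\theta_0-dh$. This is meromorphic with poles of finite order along $(xy=0)$, and $\zeta_0^*\eta=a_0\zeta_0^*\omega_0+dh+\alpha\,dx/x+\beta\,dy/y$.

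\emph{Step 4 (extension).} For $m\geq3$ I apply Theorem~B with $n=2$, $f_1=F$, $f_2=G$, residues $-\lambda$ and $1$ (both nonzero), constants $c_1=\alpha$, $c_2=\beta$, and the pair $a_0,h_0=h$ from Steps~1--3. The hypothesis $d\eta\wedge\omega_0=0$ is given, the divisor $(FG=0)$ is reduced, and the polar set of $\eta$ has finite order along it. Theorem~B then produces $a,h\in\Mm_m$ with finite pole order along $(FG=0)$ such that
\[
 \eta=a\omega_0+dh+\alpha\,dF/F+\beta\,dG/G ,
\]
which is the asserted decomposition. The only point to double-check here is that Theorem~B is used only in its general finite-pole form, not its simple-pole refinement, so arbitrary pole order is harmless.
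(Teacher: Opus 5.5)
Your proposal is correct and follows the paper's own proof. You restrict to $S$ and rerun the Laurent-coefficient computation behind Theorem~A with an arbitrary finite lower bound on the exponents, using the integer-index diophantine estimate for convergence; the only resonance, at $(0,0)$, produces $\alpha,\beta$, and Theorem~B then lifts the decomposition to $(\C^m,0)$. Your write-up adds detail the paper leaves implicit: the explicit recursion $(i+\lambda j)h_{ij}=a_{i-1,j}+\lambda b_{i,j-1}$, the choice $a_0=yB$, and the check of Theorem~B's hypotheses.
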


\begin{proof}
Restrict first to $S$; when $m=2$ this is the original germ. By the
general-position assumption,
\[
 x=F|_S,\qquad y=G|_S
\]
form a local coordinate system at the origin of $S$. Hence
\[
 \omega_0|_S=\frac{dy}{y}-\lambda\frac{dx}{x}.
\]
The coefficient calculation is the same as in the proof of the
two-dimensional relative-decomposition proposition, except that the
Laurent indices now have an arbitrary fixed lower bound. The only divisors
are $p-\lambda q$, with $(p,q)\in\mathbb Z^2$. The integer-index
diophantine estimate gives convergence after division and does not change
the finite lower bound. The only resonant coefficient is the constant one;
its two components give constants $\alpha,\beta$. We obtain directly on
$S$ the decomposition
\[
 \eta|_S=a_0\omega_0|_S+dh_0+\alpha\frac{dF|_S}{F|_S}
 +\beta\frac{dG|_S}{G|_S},
\]
with $a_0,h_0$ of finite pole order. If $m=2$, this is already the asserted
decomposition. If $m\geq3$, apply
\hyperref[thm:B]{Theorem~B}, which produces meromorphic germs $a,h$ of
finite pole order with the asserted decomposition.
\end{proof}

\begin{lemma}[Several-component finite-pole decomposition]
\label{lem:several-finite-pole}
Let
\[
 \omega_0=\sum_{j=1}^n\lambda_j\frac{df_j}{f_j}
\]
satisfy the hypotheses of \hyperref[thm:G]{Theorem~G}. Let $\eta$ be a
meromorphic $1$-form whose polar divisor is supported on
$D=(f_1\cdots f_n=0)$ and has arbitrary finite order. If
\[
 d\eta\wedge\omega_0=0,
\]
then there are meromorphic germs $a,h$, of finite pole order supported on
$D$, and constants $\mu_1,\ldots,\mu_n\in\C$ such that
\[
 \eta=a\omega_0+dh+\sum_{j=1}^n\mu_j\frac{df_j}{f_j}.
\]
\end{lemma}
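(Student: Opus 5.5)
The plan is to rerun the proof of Theorem~C, dropping the hypothesis that $P\eta$ is holomorphic and keeping track only of finiteness of pole orders. Theorem~C used $P\eta\in\Omega^1_2$ in just two places: in the local Laurent calculations at the corners and smooth points of the resolved divisor, and in the uniform lower Laurent bound for the holonomy cocycle. Both only need some lower bound on the exponents, not the bound $-1$.

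First fix a reduction $\pi:(M,E)\to(U,0)$ as in the proof of Theorem~C and put $\widetilde\eta=\pi^*\eta$. Its polar divisor is supported on $\widetilde D$ and has finite order along every component, since $\pi$ is proper and $\eta$ has finite order along $D$. The residues $\rho_C$ are nonzero by $\mathbb Z$-linear independence, and at each corner $\widetilde\omega_0$ is linear logarithmic with residues $\rho_C,\rho_{C'}$. At such a corner the Laurent coefficient equations read $(p\rho_{C'}-q\rho_C)h_{pq}=(\text{coefficient of }\eta)$, now for $(p,q)\in\mathbb Z^2$ bounded below by $-N$. This is the argument of Lemma~\ref{lem:finite-pole-decomposition}:
\begin{itemize}
 \item the divisors are integer combinations of the $\lambda_j$ with coefficient vector of size $O(|p|+|q|)$;
 \item the integer-index diophantine condition therefore gives polynomial lower bounds, hence convergence;
 \item no new negative exponents are created;
 \item the only resonant monomial is the constant one, by $\mathbb Z$-independence, and it produces the logarithmic coefficients.
\end{itemize}
At smooth points of $\widetilde D$, use the meromorphic flow-box lemma from the section on the Extension Theorem, which already allows arbitrary finite pole order.

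Next choose the constants $\mu_j$ as in Theorem~C, reading them off at generic points of the strict transforms of $(f_j=0)$. Set $\Theta=\widetilde\eta-\sum\mu_j\,d(f_j\circ\pi)/(f_j\circ\pi)$. The meridian relation shows that $\operatorname{Res}_C\Theta\in\C\rho_C$ on exceptional components, which yields local decompositions $\Theta|_V=a_V\widetilde\omega_0+dh_V$ with finite pole orders. Then verify the hypotheses of the leafwise gluing lemma exactly as before:
\begin{itemize}
 \item the holonomy is linear in logarithmic coordinates, with the small-divisor estimate inherited from the diophantine condition;
 \item the periods vanish because of the choice of the $\mu_j$;
 \item the uniform lower Laurent bound for the cocycles $c_g$ comes from a finite cover of the compact set $E$, taking the maximum of the finitely many pole orders of $\Theta$ and of the local primitives $h_V$.
\end{itemize}
The gluing lemma gives $\widetilde a,\widetilde h$ near $E$, and they descend to $a,h\in\Mm_2$ as in Theorem~C.

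The main obstacle is the last claim of the statement: that $a$ and $h$ have finite pole order supported on $D$, rather than being merely meromorphic. Finiteness is clear, because a meromorphic germ has finite order along each component. For the support, I would argue that off $\pi^{-1}(D)$ the function $h^\circ$ is obtained by leafwise integration of the holomorphic form $\Theta$, so it is holomorphic there. Indeed, in the gluing lemma $h^\circ$ is holomorphic off $\widetilde D$. Consequently $\widetilde h$ has poles only on $\widetilde D$, and $h$ has poles only on $D$. Then $a$ is determined by $\eta-dh-\sum\mu_j df_j/f_j=a\omega_0$, and $\omega_0$ is nonvanishing off $D\cup\operatorname{Sing}$. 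It follows that the poles of $a$ lie in $D$ plus possibly the origin, and the origin is removed by Hartogs in dimension two for a codimension-two set of indeterminacy of a meromorphic function whose divisor is supported on $D$.
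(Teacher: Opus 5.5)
Your plan is essentially the paper's proof. You resolve $D$ together with the foliation, solve the corner Laurent equations with divisors $p\rho_{C'}-q\rho_C$ bounded by the integer-index condition, and use flow boxes at smooth points of $\widetilde D$. You then subtract $\sum_j\mu_j\,d(f_j\circ\pi)/(f_j\circ\pi)$ using the meridian relation, glue with the additive holonomy lemma using a uniform Laurent bound from a finite cover of $E$, and descend. Your extra paragraph on the support of the poles of $a,h$ is fine; the paper simply asserts it. Its last step needs no Hartogs argument: the polar set of a meromorphic function has pure codimension one, so it cannot contain an isolated point.

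One claim, however, is false as stated and hides a local obstruction. You say that at a corner the only resonant monomial is the constant one ``by $\mathbb Z$-independence''. That was true in Lemma~\ref{lem:finite-pole-decomposition}, where no blow-up occurs. After resolution, $\mathbb Z$-independence only gives $p\rho_{C'}-q\rho_C=\sum_j(pm_{C'j}-qm_{Cj})\lambda_j\neq0$ unless $p\,m_{C'}=q\,m_C$. Adjacent components can have proportional multiplicity vectors: two tangent smooth branches already produce exceptional curves with vectors $(1,1)$ and $(2,2)$, hence a local first integral $xy^2$ at their corner. The paper's proof allows for such resonances and normalizes the undetermined coefficient of the primitive to zero.

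Solvability at such a corner still requires an argument. Write $(\rho_C,\rho_{C'})=s(k,l)$ with $k,l$ coprime positive integers, and $xy\,\Theta\wedge\widetilde\omega_0=g\,dx\wedge dy$. The local leaves $x^ky^l=c$ are annuli, and the local equation is solvable only if the coefficients $g_{jk,jl}$ vanish for all $j\in\mathbb Z$. For $j\neq0$ these are not residues, and your choice of the $\mu_j$ does not affect them.

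The missing step is this. Up to a nonzero constant factor, $\sum_j g_{jk,jl}\,c^j$ is the period of $\Theta$ along the core circle of the annular leaf $x^ky^l=c$. That circle lies in an open leaf, which is simply connected by hypothesis, so it is null-homotopic there. Moreover, $\Theta$ restricts to a holomorphic, hence closed, $1$-form on leaves off $\widetilde D$. So the period vanishes for all small $c\neq0$, every resonant coefficient vanishes, and your corner analysis goes through with the first-integral coefficients of $h_V$ normalized to zero.
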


\begin{proof}
Let
\[
 \pi:(M,E)\longrightarrow(\C^2,0)
\]
be an embedded resolution of $D$, chosen simultaneously with the reduction
of $\mathcal F_{\omega_0}$. Put
\[
 \widetilde\omega_0=\pi^*\omega_0,\qquad
 \widetilde\eta=\pi^*\eta,
\]
and denote by $\widetilde D$ the total transform. Since $\eta$ has finite
pole order along $D$, both $\widetilde\eta$ and its coefficients have finite
pole order along the normal-crossings divisor $\widetilde D$.

Let $C$ be a component of $\widetilde D$. Its residue for
$\widetilde\omega_0$ is
\[
 \rho_C=\sum_{j=1}^n m_{Cj}\lambda_j,\qquad
 m_{Cj}=\operatorname{ord}_C(f_j\circ\pi).
\]
The vector $(m_{C1},\ldots,m_{Cn})$ is nonzero, and therefore
$\rho_C\neq0$. At a corner $C\cap C'$, after absorbing a holomorphic exact
term into the coordinates, we have
\[
 \widetilde\omega_0=
 \rho_C\frac{dx}{x}+\rho_{C'}\frac{dy}{y}.
\]
Write the coefficients of $\widetilde\eta$ in Laurent series. They vanish
when either Laurent exponent is smaller than some fixed integer $-N_V$.
For a monomial $x^py^q$, the nonzero divisors in the relative coefficient
equations are
\[
 p\rho_{C'}-q\rho_C
 =\sum_{j=1}^n(pm_{C'j}-qm_{Cj})\lambda_j.
\]
The integer-index Diophantine condition bounds their reciprocals by a
polynomial in $|p|+|q|$. Consequently, coefficientwise division preserves
convergence on a smaller bidisc. It does not change the Laurent exponents,
so the resulting local functions have finite principal parts. At a smooth
point of $\widetilde D$, the meromorphic flow-box lemma gives the same
finite-pole conclusion. If the displayed divisor vanishes, then
$x^py^q$ is a meromorphic first integral of $\widetilde\omega_0$. The
corresponding coefficient of a relative primitive is therefore not
determined by the relative equation; we normalize it to be zero. Changing
this normalization adds a meromorphic first integral to the primitive and
does not change its relative differential. The remaining cokernel is the
constant logarithmic residue class treated below. We therefore obtain
local decompositions, up to constant logarithmic terms, on every chart of
a finite cover of $E$.

At a generic point of the strict transform of $(f_j=0)$, denote its
logarithmic coefficient by $\mu_j$. The local decomposition determines the
vector of logarithmic coefficients only modulo a common multiple of the
residue vector of $\widetilde\omega_0$; changing that multiple amounts
exactly to changing $a_V$. Fix this ambiguity on one chart and propagate
the normalization along the exceptional dual graph. Since this graph is a
tree, the choices are compatible on all overlaps. A meridian around an
exceptional component $C$ projects to the integral combination of the
original meridians with coefficients $m_{Cj}$. Hence the logarithmic
coefficient induced along $C$ is
\[
 \mu_C=\sum_{j=1}^n m_{Cj}\mu_j.
\]
It follows that
\[
 \Theta:=\widetilde\eta-
 \sum_{j=1}^n\mu_j\frac{d(f_j\circ\pi)}{f_j\circ\pi}
\]
has zero logarithmic obstruction modulo $\widetilde\omega_0$ along every
component of $\widetilde D$. More precisely,
\[
 \operatorname{Res}_C(\Theta)\in\mathbb C\rho_C
\]
for each component $C$, and at a corner $C\cap C'$ the residue vector is
proportional to $(\rho_C,\rho_{C'})$. These residual terms are absorbed
into the local multiple $a_V\widetilde\omega_0$. Thus, on each member $V$
of the chosen cover, there are meromorphic functions $a_V,h_V$ of finite
pole order such that
\[
 \Theta|_V=a_V\widetilde\omega_0+dh_V.
\]

It remains to glue the local primitives. Choose a transverse disk
$\Sigma$ in a regular flow box and let $h_\Sigma$ be one of the local
primitives. For a holonomy germ $g$ represented by leafwise paths
$\gamma_{g,z}$, define
\[
 c_g(z)=h_\Sigma(g(z))-h_\Sigma(z)
 -\int_{\gamma_{g,z}}\Theta.
\]
Then
\[
 c_{g\circ f}=c_f+c_g\circ f.
\]
Because $\widetilde\omega_0$ is closed logarithmic, the holonomy of each
invariant component is simultaneously linear in a logarithmic transverse
coordinate. If
$\mu=\exp(2\pi i\rho_{C'}/\rho_C)$ lies on the unit circle, choose for
each $k\neq0$ the nearest integer $r$ to $k\rho_{C'}/\rho_C$. Then
\[
 |\mu^k-1|\asymp
 \frac{|k\rho_{C'}-r\rho_C|}{|\rho_C|},
\]
where $r=O(|k|)$ and
$k\rho_{C'}-r\rho_C$ is an integral combination of the original residues
with coefficient norm $O(|k|)$. The integer-index estimate therefore gives
the required polynomial lower bound; the finite-multiplier case is handled
by averaging. The constant Laurent
coefficient of $c_g$ is the limiting logarithmic period of $\Theta$, hence
vanishes by the preceding residue calculation.

Finally, choose the cover of the compact exceptional divisor to be finite
and set $N=\max_V N_V$. Linear holonomy substitutions preserve Laurent
exponents, and leafwise integration does not introduce exponents below
$-N$. Thus all $c_g$ have a common finite principal part. The additive
holonomy lemma gives a meromorphic $b$ such that
\[
 c_g=b\circ g-b.
\]
Replacing $h_\Sigma$ by $h_\Sigma-b$ makes the continuation invariant under
all holonomy returns. Since the open leaves are simply connected, leafwise
continuation is independent of the chosen chain of flow boxes. The
non-nodal saturation lemma shows that this continuation reaches a punctured
neighborhood of the whole exceptional divisor. The local decompositions
above then extend it meromorphically across smooth points and corners, with
finite pole order. We obtain meromorphic functions
$\widetilde a,\widetilde h$ near $E$ satisfying
\[
 \Theta=\widetilde a\widetilde\omega_0+d\widetilde h.
\]
Meromorphic functions near the exceptional set descend through the proper
modification. The descended germs $a,h$ have finite pole order supported
on $D$ and give the asserted decomposition.
\end{proof}

\begin{proof}[Proof of Theorem G]
Put $P=f_1\cdots f_n$ and $\omega_t=\Omega_t/P$. Thus
\[
 \omega_t=\omega_0+\sum_{k\geq1}t^k\omega_k.
\]
The coefficient of $t$ in $\omega_t\wedge d\omega_t=0$ gives
$d\omega_1\wedge\omega_0=0$. By
Lemma~\ref{lem:several-finite-pole},
\[
 \omega_1=a_1\omega_0+dh_1+
 \sum_{j=1}^n c_j^1\frac{df_j}{f_j}.
\]
Divide by the formal unit $1+t a_1$. The coefficient of order one is then
closed, and the remaining coefficients are meromorphic with finite pole
order supported on $D$. At order two, integrability gives
\[
 d\widetilde\omega_2\wedge\omega_0=0,
\]
because the exact and logarithmic terms already extracted at order one are
closed. Applying Lemma~\ref{lem:several-finite-pole} to
$\widetilde\omega_2$ and dividing by $1+t^2a_2$ removes its multiple of
$\omega_0$.

Inductively, after the first $r-1$ steps, the normalized deformation has
the form
\[
 \omega_0+\sum_{k=1}^{r-1}t^k
 \left(dh_k+\sum_{j=1}^n c_j^k\frac{df_j}{f_j}\right)
 +t^r\widetilde\omega_r+O(t^{r+1}).
\]
All displayed coefficients before order $r$ are closed. The coefficient of
$t^r$ in the integrability equation therefore gives
$d\widetilde\omega_r\wedge\omega_0=0$. The several-component finite-pole
lemma gives
\[
 \widetilde\omega_r=a_r\omega_0+dh_r+
 \sum_{j=1}^n c_j^r\frac{df_j}{f_j},
\]
and division by $1+t^ra_r$ completes the induction step. Hence
\[
 \frac{\omega_t}{\widehat U}
 =\omega_0+d\widehat H+
 \sum_{j=1}^n\widehat c_j(t)\frac{df_j}{f_j}.
\]
Since $\omega_t=\Omega_t/P$, this is precisely
\[
 \frac{\Omega_t}{P\widehat U}
 =\omega_0+d\widehat H+
 \sum_{j=1}^n\widehat c_j(t)\frac{df_j}{f_j},
\]
the normalized formula stated in Theorem~G. Each
application of Lemma~\ref{lem:several-finite-pole} gives finite pole order
for the coefficient under consideration, but the order may increase with
$r$.
\end{proof}

\begin{proof}[Proof of Theorem H]
Put $\omega_t:=\Omega_t/(FG)$ and
\[
 \omega_t=\omega_0+\sum_{j=1}^{+\infty}t^j\omega_j.
\]
From $\omega_t\wedge d\omega_t=0$, we obtain
\[
 d\omega_1\wedge\omega_0=0.
\]
Therefore
\[
 \omega_1=a_1\omega_0+dh_1
 +\lambda_1\frac{dF}{F}+\mu_1\frac{dG}{G},
\]
and hence
\[
 \omega_t=(1+ta_1)\omega_0+tdh_1
 +t\left(\lambda_1\frac{dF}{F}+\mu_1\frac{dG}{G}\right)
 +\sum_{j=2}^{+\infty}t^j\omega_j.
\]
Set
\begin{align*}
 \widetilde\omega_t:=\frac{\omega_t}{1+ta_1}
 &=\omega_0+\frac{t}{1+ta_1}dh_1
 +\frac{t}{1+ta_1}\left(\lambda_1\frac{dF}{F}
 +\mu_1\frac{dG}{G}\right)
 +\sum_{j=2}^{+\infty}\frac{t^j}{1+ta_1}\omega_j\\
 &=\omega_0+tdh_1
 +t\left(\lambda_1\frac{dF}{F}+\mu_1\frac{dG}{G}\right)
 +\sum_{j=2}^{+\infty}t^j\widetilde\omega_j,
\end{align*}
where $\widetilde\omega_j$ is meromorphic, with finite pole order along
$(FG=0)$. Apply the finite-pole relative-decomposition lemma. Since
$\widetilde\omega_t\wedge d\widetilde\omega_t=0$, we have
\[
 \left(\omega_0+t\left(dh_1+\lambda_1\frac{dF}{F}
 +\mu_1\frac{dG}{G}\right)+t^2\widetilde\omega_2
 +t^3\widetilde\omega_3+\cdots\right)
 \wedge\left(t^2d\widetilde\omega_2
 +t^3d\widetilde\omega_3+\cdots\right)=0.
\]
Thus $d\widetilde\omega_2\wedge\omega_0=0$, and therefore
\[
 \widetilde\omega_2=a_2\omega_0+dh_2
 +\lambda_2\frac{dF}{F}+\mu_2\frac{dG}{G}.
\]
It follows that
\begin{align*}
 \widetilde\omega_t
 &=\omega_0+t\left[dh_1+\lambda_1\frac{dF}{F}
 +\mu_1\frac{dG}{G}\right]
 +t^2\left[a_2\omega_0+dh_2+\lambda_2\frac{dF}{F}
 +\mu_2\frac{dG}{G}\right]+t^3\widetilde\omega_3+\cdots\\
 &=(1+t^2a_2)\omega_0+tdh_1+t^2dh_2
 +(t\lambda_1+t^2\lambda_2)\frac{dF}{F}
 +(t\mu_1+t^2\mu_2)\frac{dG}{G}
 +t^3\widetilde\omega_3+\cdots.
\end{align*}
Dividing by $1+t^2a_2$, we obtain a new expression of the form
\[
 \omega_0+tdh_1+t^2dh_2
 +(t\lambda_1+t^2\lambda_2)\frac{dF}{F}
 +(t\mu_1+t^2\mu_2)\frac{dG}{G}
 +\sum_{j=3}^{+\infty}t^j\widetilde{\widetilde\omega}_j,
\]
where $\widetilde{\widetilde\omega}_j$ is meromorphic with finite pole
order along $(FG=0)$. Proceeding in this way, we obtain
\[
 \frac{\omega_t}{\displaystyle\prod_{j=1}^{+\infty}(1+a_jt^j)}
 =\omega_0+\sum_{j=1}^{\infty}t^jdh_j
 +\left(\sum_{j=1}^{\infty}\lambda_jt^j\right)\frac{dF}{F}
 +\left(\sum_{j=1}^{\infty}\mu_jt^j\right)\frac{dG}{G}.
\]
Put
\[
 \widehat U(t):=\prod_{j=1}^{+\infty}(1+a_jt^j),\qquad
 \widehat H_t:=\sum_{j=1}^{+\infty}t^jh_j,
\]
\[
 \widehat\lambda(t):=\sum_{j=1}^{+\infty}\lambda_jt^j,
 \qquad
 \widehat\mu(t):=\sum_{j=1}^{+\infty}\mu_jt^j.
\]
Then
\[
 \frac{\omega_t}{\widehat U(t)}
 =\omega_0+d\widehat H_t
 +\widehat\lambda(t)\frac{dF}{F}
 +\widehat\mu(t)\frac{dG}{G}.
\]
Since
\[
 \omega_0=\frac{dG}{G}-\lambda\frac{dF}{F},
\]
we obtain
\[
 \frac{\omega_t}{\widehat U(t)}
 =\bigl(\widehat\lambda(t)-\lambda\bigr)\frac{dF}{F}
 +\bigl(\widehat\mu(t)+1\bigr)\frac{dG}{G}
 +d\widehat H_t.
\]
Set
\[
 \widehat\alpha(t)=\widehat\lambda(t)-\lambda,
 \qquad
 \widehat\beta(t)=\widehat\mu(t)+1.
\]
Then $\widehat\alpha(0)=-\lambda$ and $\widehat\beta(0)=1$. Multiplying
the preceding identity by $FG\widehat U(t)$ gives
\[
 \Omega_t=\widehat U(t)\left[
 \widehat\alpha(t)G\,dF+\widehat\beta(t)F\,dG
 +FG\,d\widehat H_t
 \right],
\]
which is the asserted normal form.
\end{proof}

\begin{corollary}[Higher-dimensional several-component version]
\label{cor:higher-several}
Let $m\geq3$ and
\[
 \omega_0=\sum_{j=1}^n\lambda_j\frac{df_j}{f_j},
 \qquad P=f_1\cdots f_n,\qquad\Omega_0=P\omega_0,
\]
where $D=(P=0)$ is reduced. Assume that the residues are
$\mathbb Z$-linearly independent and satisfy the integer-index
Diophantine condition. Suppose that there is
a holomorphic embedding
\[
 \zeta:(\C^2,0)\longrightarrow(\C^m,0),\qquad S=\zeta(\C^2),
\]
in general position with respect to $(\mathcal F_{\omega_0},D)$ in the
sense of Definition~\ref{def:section-general-position}, such that:
\begin{enumerate}[label=\textup{(\roman*)}]
 \item the restricted divisor
 \[
  D_S=\left(\prod_{j=1}^n(f_j\circ\zeta)=0\right)
 \]
 is reduced;
 \item
 \[
  \omega_0|_S=
  \sum_{j=1}^n\lambda_j
  \frac{d(f_j\circ\zeta)}{f_j\circ\zeta}
 \]
 defines a non-dicritical logarithmic foliation germ;
 \item the open leaves of the restricted foliation are simply connected;
 \item the reduction of the restricted foliation is free of nodal
 separators.
\end{enumerate}

Then, for every analytic integrable deformation
\[
 \Omega_t=\Omega_0+\sum_{k=1}^{\infty}t^k\Omega_k,
\]
there are $\widehat U,\widehat H\in\mathcal M_m[[t]]$, with
$\widehat U|_{t=0}=1$, and series
$\widehat c_j(t)\in t\C[[t]]$ such that
\[
 \frac{\Omega_t}{P\widehat U}
 =\omega_0+d\widehat H+
 \sum_{j=1}^n\widehat c_j(t)\frac{df_j}{f_j}.
\]
Every coefficient of $\widehat U$ and $\widehat H$ has finite pole order
supported on $D$.
\end{corollary}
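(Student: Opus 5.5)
The argument follows the inductive scheme of the proof of \hyperref[thm:G]{Theorem~G} verbatim. The only new ingredient is a higher-dimensional replacement for Lemma~\ref{lem:several-finite-pole}, assembled from the two-dimensional lemma on $S$ and \hyperref[thm:B]{Theorem~B}. Put $\omega_t=\Omega_t/P=\omega_0+\sum_{k\ge1}t^k\omega_k$. The coefficients $\omega_k$ are meromorphic with (simple) poles supported on $D$. Integrability of $\Omega_t$ is equivalent to $\omega_t\wedge d\omega_t=0$.

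\textbf{Step 1: a finite-pole decomposition in dimension $m$.} Let $\eta$ be meromorphic with finite pole order supported on $D$ and with $d\eta\wedge\omega_0=0$. We claim that
\[
 \eta=a\omega_0+dh+\sum_{j=1}^n\mu_j\frac{df_j}{f_j},
\]
where $a,h$ have finite pole order supported on $D$.

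Pull back by $\zeta$. The form $\zeta^*\eta$ has finite pole order supported on $D_S$, and $d(\zeta^*\eta)\wedge\zeta^*\omega_0=\zeta^*(d\eta\wedge\omega_0)=0$.

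Next we check that $\omega_0|_S$ satisfies the hypotheses of Theorem~G.
\begin{itemize}
 \item Reducedness of $D_S$ is hypothesis (i), and non-dicriticity is (ii).
 \item Simple connectivity of the open leaves is (iii), and freeness from nodal separators is (iv).
 \item The residues are unchanged by restriction. They remain $\mathbb Z$-linearly independent and Diophantine.
 \item Each $f_j\circ\zeta$ is reduced and not identically zero. This follows from (i), together with the transversality condition (iii) of Definition~\ref{def:section-general-position}.
\end{itemize}
The only subtle point is that some $f_j\circ\zeta$ might become reducible on $S$. If so, Lemma~\ref{lem:several-finite-pole} should be applied with the branches of $f_j\circ\zeta$ as components. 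Each resulting logarithmic coefficient must then be shown constant along the branches of a single $f_j$. I expect to handle this via the residue propagation in that proof: meridians of different branches of the same $f_j|_S$ are homologous in $U\setminus D$ near generic points, by Definition~\ref{def:section-general-position}(iii). Alternatively, it suffices to note that the paper's statements implicitly treat $f_j\circ\zeta$ as the components.

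Lemma~\ref{lem:several-finite-pole} then yields $a_0,h_0$ and constants $\mu_j$ with
\[
 \zeta^*\eta=a_0\zeta^*\omega_0+dh_0+\sum_j\mu_j\frac{d(f_j\circ\zeta)}{f_j\circ\zeta}.
\]
Theorem~B, whose hypotheses are exactly $d\eta\wedge\omega_0=0$, finite poles on $D$, and the general-position embedding, extends $a_0,h_0$ to $a,h\in\mathcal M_m$ with the decomposition above.

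\textbf{Step 2: induction on the order in $t$.} At order one, the $t$-coefficient of $\omega_t\wedge d\omega_t=0$ gives $d\omega_1\wedge\omega_0=0$, using $d\omega_0=0$. Apply Step 1 to $\omega_1$ and divide $\omega_t$ by $1+ta_1$.

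For the inductive step, suppose that after $r-1$ steps the normalized deformation is
\[
 \omega_0+\sum_{k<r}t^k\Bigl(dh_k+\sum_jc_j^k\frac{df_j}{f_j}\Bigr)+t^r\widetilde\omega_r+O(t^{r+1}).
\]
All coefficients below order $r$ are closed, so the $t^r$-coefficient of the integrability identity reduces to $d\widetilde\omega_r\wedge\omega_0=0$. Step 1 applied to $\widetilde\omega_r$ gives $a_r,h_r,c^r_j$, and we divide by $1+t^ra_r$.

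Division by a unit $1+t^ra_r$ with $a_r$ of finite pole order preserves the property that every $t$-coefficient is meromorphic with finite pole order supported on $D$. Pole orders may grow with $r$, but this is allowed in $\mathcal M_m[[t]]$. The formal products $\widehat U=\prod(1+t^ka_k)$ and $\widehat H=\sum t^kh_k$ converge $t$-adically, and $\widehat U|_{t=0}=1$. Collecting terms gives the stated identity.

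\textbf{Main obstacle.} I expect the main obstacle to be Step 1: verifying that the restricted data genuinely meet the hypotheses of Lemma~\ref{lem:several-finite-pole}, in particular the component and residue bookkeeping when $f_j\circ\zeta$ splits. A second point to check is that Theorem~B's conclusion carries the same constants $\mu_j$. Theorem~B returns exactly the constants $c_j$ of the two-dimensional decomposition, so this second point should be automatic.
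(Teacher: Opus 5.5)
Your proposal is correct and follows the paper's argument. You restrict to $S$, apply Lemma~\ref{lem:several-finite-pole} to $\zeta^*\eta$, extend the decomposition with Theorem~B (which keeps the constants $c_j$), and then run the order-by-order induction from the proof of Theorem~G verbatim. The paper does not address your worry about $f_j\circ\zeta$ splitting into branches either: it applies the lemma as a black box with the functions $f_j\circ\zeta$ as the data, which is legitimate because Theorem~G's hypotheses do not require the $f_j$ to be irreducible. That is exactly your ``alternative'' reading.
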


\begin{proof}
Restrict a finite-pole relative-cohomology equation to $S$ and apply
Lemma~\ref{lem:several-finite-pole}. Then
\hyperref[thm:B]{Theorem~B} extends the resulting decomposition to
$(\C^m,0)$. This gives the higher-dimensional
several-component finite-pole lemma. The formal induction in the proof of
Theorem~G then applies verbatim.
\end{proof}

As a final result we have: 

\begin{proposition}\label{prop:eight}
Let $F,G,\lambda$ and $\Omega_t$ be as in
\hyperref[thm:H]{Theorem~H}. There is a
codimension-two foliation germ $\mathcal F$ at
$0\in\C^m\times\C$ such that:
\begin{enumerate}[label=\textup{(\roman*)}]
 \item $\C^m\times\{t\}$ is invariant for every $t\in(\C,0)$;
 \item the restriction $\mathcal F|_{\C^m\times\{t\}}$ coincides with
 the foliation $\mathcal F_t:\Omega_t=0$, for every $t\in(\C,0)$.
\end{enumerate}
\end{proposition}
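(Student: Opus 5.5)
The plan is to build $\mathcal F$ directly from the analytic family, without using the formal normal form of Theorem~H, since that normal form is only formal in $t$. Regard the deformation as a single holomorphic $1$-form on $(\C^m\times\C,0)$ with no $dt$ component:
\[
 \Omega(z,t):=\Omega_0(z)+\sum_{k\geq1}t^k\Omega_k(z).
\]
This converges because the deformation is analytic. Define $\mathcal F$ as the codimension-two distribution $\ker\Omega\cap\ker dt$, that is, the foliation given by the pair of $1$-forms $\{\Omega,dt\}$, equivalently by the $2$-form $\Omega\wedge dt$.

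\textbf{Step 1: integrability.} Frobenius for a pair of $1$-forms asks that
\[
 d\Omega\wedge\Omega\wedge dt=0
 \qquad\text{and}\qquad
 d(dt)\wedge\Omega\wedge dt=0 .
\]
The second condition is trivial. For the first, split $d=d_z+dt\wedge\partial_t$, so that
\[
 d\Omega=d_z\Omega_t+dt\wedge\partial_t\Omega .
\]
The $dt\wedge\partial_t\Omega$ term dies after wedging with $dt$. Hence
\[
 d\Omega\wedge\Omega\wedge dt=(d_z\Omega_t\wedge\Omega_t)\wedge dt .
\]
This vanishes because every $\Omega_t$ is integrable in $z$, which is exactly the hypothesis on the deformation.

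\textbf{Step 2: properties (i) and (ii).} Since $dt$ vanishes on every leaf, each slice $\C^m\times\{t\}$ is a union of leaves, so it is invariant; this gives (i). On a slice, the leaves are the integral manifolds of $\ker\Omega_t$, which gives (ii). This holds on the regular part, and then everywhere once we know the singular sets agree.

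\textbf{Step 3: the singular set (the main obstacle).} We need $\operatorname{Sing}\mathcal F$, the zero set of $\Omega\wedge dt$ (that is, the common zeros of the $z$-coefficients of $\Omega$), to have codimension at least two. For $t=0$ this holds: $\Omega_0=F\,dG-\lambda G\,dF$ vanishes only on $(F=G=0)$. Indeed, in the coordinates where $F=x$ and $G=y$, general position gives $\Omega_0=x\,dy-\lambda y\,dx$, which is reduced.

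I would argue semicontinuity as follows. The zero set of $\Omega\wedge dt$ in $\C^{m+1}$ is analytic. Any hypersurface component through the origin would meet $t=0$ in a set of codimension at most one in $\C^m$ inside $\operatorname{Sing}\Omega_0$, unless that component is contained in $(t=0)$. Containment in $(t=0)$ is impossible: it would force $t$ to divide all coefficients, hence $\Omega_0=0$.

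If one prefers to avoid this argument, there is a fallback: divide the coefficients of $\Omega$ by their gcd $g(z,t)\in\mathcal O_{m+1}$ (saturation). The argument just given shows $g$ is a unit near $0$. So $\Omega_t$ itself defines $\mathcal F_t$, and the restriction of $\mathcal F$ to each slice coincides with $\mathcal F_t$ including singular sets.
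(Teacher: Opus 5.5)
Your proposal is correct and follows the paper's proof: you use the same Pfaff system $\langle dt,\Omega\rangle$ and the same Frobenius computation via $d=d_z+dt\wedge\partial_t$, giving $dt\wedge\Omega\wedge d\Omega=dt\wedge\Omega_t\wedge d_z\Omega_t=0$. The only difference is in the treatment of the singular set. The paper simply passes to the saturation of $\mathcal I$. Your Step~3 instead uses that $\Omega_0=F\,dG-\lambda G\,dF$ has a codimension-two zero set to show that the coefficients of $\Omega$ have no common factor near the origin, so no saturation is needed and (ii) holds directly, which is slightly cleaner than the paper's remark about saturating and then restricting.
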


\begin{proof}
On $(\C^m\times\C,0)$ consider the Pfaff system
\[
 \mathcal I=\langle dt,\Omega\rangle,
 \qquad \Omega(x,t):=\Omega_t(x).
\]
Let $\mathcal I^{\mathrm{sat}}$ denote its saturation; this removes common
divisorial factors and is the Pfaff system defining $\mathcal F$ in the
reduced sense. At every point where $\mathcal I$ has the expected rank,
$\mathcal I^{\mathrm{sat}}$ and $\mathcal I$ define the same distribution.
At every regular point of $\Omega_t$, the forms $dt$ and $\Omega$ are
linearly independent, because $\Omega$ has no $dt$ component. Thus
$\mathcal I$ has codimension two in the total space. On the fiber
$\C^m\times\{t\}$ the equation $dt=0$ is automatic, and the restricted
system is the single equation $\Omega_t=0$; hence its restriction has
codimension one and coincides with $\mathcal F_t$.
After saturation, the restriction is the reduced Pfaff equation obtained
from $\Omega_t$ by removing its common divisorial factor, and therefore it
defines exactly the same foliation $\mathcal F_t$.

It remains to check integrability. Write $d_x$ for the differential in the
$x$ variables. Since every $\Omega_t$ is integrable,
\(
 \Omega_t\wedge d_x\Omega_t=0.
\)
Since
\[
 d\Omega=d_x\Omega+dt\wedge\frac{\partial\Omega}{\partial t},
\]
we have identically
\[
 dt\wedge\Omega\wedge d\Omega
 =dt\wedge\Omega\wedge d_x\Omega=0.
\]
Together with $d(dt)=0$, this is precisely the Frobenius condition for the
decomposable Pfaff system generated by $dt$ and $\Omega$. Integrability is
preserved by saturation, so $\mathcal I^{\mathrm{sat}}$ defines the asserted
codimension-two foliation germ. Since $dt$ belongs to its defining ideal,
every fiber $\C^m\times\{t\}$ is invariant.
\end{proof}

\end{document}